\def\bl#1{{\color{blue} #1}}
\def\urls#1{{\small \url{#1}}}
\def\transpose{{\hbox{\tiny\it T}}}
\newcommand{\RL}{{\mathbb R}}
\newcommand{\clH}{\mbox{${\cal H}$}}
\newcommand{\clM}{\mbox{${\cal M}$}}
\newcommand{\clS}{\mbox{${\cal S}$}}
\def\be{\begin{eqnarray}}
\def\ee{\end{eqnarray}}
\def\ben{\begin{eqnarray*}}
\def\een{\end{eqnarray*}}
\def\barc{\oo c}
\def\barx{{\oo x}}
\def\sq{$\Box$}
\def\qed{\ifmmode\sq\else{\unskip\nobreak\hfil
\penalty50\hskip1em\null\nobreak\hfil\sq
\parfillskip=0pt\finalhyphendemerits=0\endgraf}\fi\par\medbreak}
\def\tov{\mathchoice{{\ \mathop{\displaystyle\mathop{\longrightarrow}_{v}}\ }}%
{{\,\mathop{\displaystyle\mathop{\longrightarrow}_{v}}\,}}%
{{\,\mathop{\displaystyle\mathop{\to}_{v}}\,}}%
{{\,\mathop{\displaystyle\mathop{\to}_{v}}\,}}%
}
\def\tove{\mathchoice{{\ \mathop{\displaystyle\mathop{\longrightarrow}_{v^{\eta}}}\ }}%
	{{\,\mathop{\displaystyle\mathop{\longrightarrow}_{v^{\eta}}}\,}}%
	{{\,\mathop{\displaystyle\mathop{\to}_{v^{\eta}}}\,}}%
	{{\,\mathop{\displaystyle\mathop{\to}_{v^{\eta}}}\,}}%
}
\newsavebox{\junk}
\savebox{\junk}[1.6mm]{\hbox{$|\!|\!|$}}
\def\lll{{\usebox{\junk}}}
\def\limsup{\mathop{\rm lim\ sup}}
\def\state{{\sf X}}
\newcommand{\field}[1]{\mathbb{#1}}
\def\Re{\field{R}}
\def\nat{\field{Z}_+}
\def\Co{\field{C}}
\def\ind{\field{I}}
\def\One{\hbox{\large\bf 1}}
\def\bfmN{{\mbox{\protect\boldmath$N$}}}
\def\bfmS{{\mbox{\protect\boldmath$S$}}}
\def\haP{{\widehat P}}
\def\til={{\widetilde =}}
\def\tilP{{\widetilde P}}
\def\clB{{\cal B}}
\def\clD{{\cal D}}
\def\clH{{\cal H}}
\def\clM{{\cal M}}
\def\clP{{\cal P}}
\def\eqdef{\mathbin{:=}}
\def\Prob{{\sf P}}
\def\Expect{{\sf E}}
\def\epsy{\varepsilon}
\def\varble{\,\cdot\,}
\newtheorem{theorem}{Theorem}[section]
\newtheorem{proposition}[theorem]{Proposition}
\newtheorem{lemma}[theorem]{Lemma}
\def\Lemma#1{Lemma~\ref{#1}}
\def\Proposition#1{Proposition~\ref{#1}}
\def\Theorem#1{Theorem~\ref{#1}}
\def\Section#1{Section~\ref{#1}}
\def\bdes{\begin{description}}
\def\edes{\end{description}}
\newcommand{\oo}{\overline}
\def\Lv#1{L_\infty^{v,#1}}
\def\Lve#1{L_\infty^{v^\eta,#1}}
\def\Lvx{L_\infty^{v}}
\def\Lvex{L_\infty^{v^\eta}}
\def\bfmX{{\mbox{\protect\boldmath$X$}}}
\def\tilc{\tilde c}
\def\transpose{{\hbox{\tiny\it T}}}
\def\One{\mbox{\rm{\large{1}}}}
\newcounter{rmnum}
\newenvironment{romannum}{\begin{list}{{\upshape (\roman{rmnum})}}{\usecounter{rmnum}
\setlength{\leftmargin}{24pt}
\setlength{\rightmargin}{16pt}
\setlength{\itemindent}{-1pt}
}}{\end{list}}
\newcounter{anum}
\newenvironment{alphanum}{\begin{list}{{\upshape (\alph{anum})}}{\usecounter{anum}
\setlength{\leftmargin}{24pt}
\setlength{\rightmargin}{16pt}
\setlength{\itemindent}{-1pt}
}}{\end{list}}
\newlength{\noteWidth}
\long\def\notes#1{\ifinner
             {\tiny #1}
             \else
             \marginpar{\parbox[t]{\noteWidth}{\raggedright\tiny #1}}
             \fi}
\def\archive#1{}
\newcommand{\Chi}{{%
\mathchoice{\mathord{\raisebox{1.5pt}{\scalebox{1.25}{$\chi$}}}}%
{\mathord{\raisebox{1.5pt}{\scalebox{1.25}{$\chi$}}}}%
{\mathord{\raisebox{1pt}{\scalebox{.75}{$\chi$}}}}%
{\mathord{\raisebox{.8pt}{\scalebox{.6}{$\chi$}}}}%
}}
\def\Sens{{\cal S}}
\def\dffA{{\cal A}}
\def\tilgrad{\nabla^\Sens\!}
\begin{document}
	
 
\title{\vspace{-2cm}%
Geometric Ergodicity in a Weighted Sobolev Space}

\author{Adithya Devraj\thanks{{\bf Corresponding author}.  Department of Electrical and Computer 
                Engineering,
                University of Florida, Gainesville, USA.
                 Email: {\tt adithyamdevraj@ufl.edu}}
\and
      	Ioannis Kontoyiannis\thanks{
		Department of Informatics,
		Athens University of Economics and Business,
		Patission 76, Athens 10434, Greece.
                Email: {\tt yiannis@aueb.gr}.        
		I.K.\ was supported 
	by the European Union and Greek National 
	Funds through the Operational Program Education 
	and Lifelong Learning of the National Strategic Reference 
	Framework through the Research Funding Program Thales-Investing 
	in Knowledge Society through the European Social Fund.  
	}
\and
        Sean Meyn\thanks{Department of Electrical and Computer 
                Engineering,
                University of Florida, Gainesville, USA.
                 Email: {\tt meyn@ece.ufl.edu}.
              S.M.\ and A.D.\ were supported 
              by NSF grants EPCN-1609131 and CPS-1259040.
                }
            }

\maketitle
\thispagestyle{empty}
\setcounter{page}{0}
 
\begin{abstract}
\noindent
For a discrete-time Markov chain $\bfmX=\{X(t)\}$ evolving 
on $\Re^\ell$ with transition kernel $P$,
natural, general conditions are developed under which
the following are established:
\begin{itemize}
\item[(i)]
The transition kernel $P$ has a purely discrete spectrum, 
when viewed as a linear operator on a weighted Sobolev 
space $\Lv{1}$ of functions  with norm,
\[
\|f\|_{v,1} = \sup_{x\in \Re^\ell} \frac{1}{v(x)} \max 
\{|f(x) |, |\partial_1 f(x)|,\ldots,|\partial_\ell f(x)|\},
\]
where $v\colon \Re^\ell \to [1,\infty)$ is a Lyapunov function
and $\partial_i:=\partial/\partial x_i$.

\item[(ii)]
The Markov chain is geometrically ergodic in $\Lv{1}$:
There is a unique invariant probability measure $\pi$ and constants 
$B<\infty$ and $\delta>0$ such that,
for each $f\in \Lv{1}$, any initial condition $X(0)=x$,
and all $t\geq 0$:
$$\Big|\Expect_x[f(X(t))] - \pi(f)\Big|
\le Be^{-\delta t}v(x),\quad
\|\nabla\Expect_x[f(X(t))] \|_2 
\le Be^{-\delta t} v(x),
$$
where $\pi(f)=\int fd\pi$.
\item[(iii)] 
For any function $f\in \Lv{1}$ there is a function $h\in \Lv{1}$ solving 
Poisson's equation:
\[
h-Ph = f-\pi(f).
\]
\end{itemize}
Part of the analysis is based on an operator-theoretic 
treatment of the sensitivity process that appears 
in the theory of Lyapunov exponents.   

\bigskip

\bigskip

{\small
\noindent
\textbf{Keywords:}  
Markov chain,  
stochastic Lyapunov function,
discrete spectrum,
sensitivity process,
weighted Sobolev space,
Lyapunov exponent}

\bigskip

{\small
\noindent
\textbf{2010 AMS Subject Classification:}
60J05,          
60J35,		
37A30,          
47H20.          
}

\end{abstract}

\thispagestyle{empty}
%
%
%
%
%
%
%
%

\clearpage

%

\section{Introduction}

Consider a discrete-time Markov chain $\bfmX = \{X(t) : t\geq 0\}$ taking
values in   $\state=\Re^\ell $, equipped with its associated Borel 
$\sigma$-field $\clB$.   Throughout the paper
(except where explicitly noted otherwise, in particular see
\Section{s:wo}) the process $\bfmX$
is assumed to be defined by the nonlinear state space model,  
\begin{equation}
X(t+1) = a(X(t),N(t+1)), \quad t \in \nat,
\label{e:SP_disc}
\end{equation}
where $\bfmN = \{N(t): t=0,1,2,\ldots\}$ is a sequence
of $\Re^m$-valued, independent and identically distributed
random variables, and $a: \Re^{\ell\times m} \to \Re^\ell$ 
is continuous, so that each realization $X(t)$ is a continuous 
function of $X(0)=x$.

The distribution of $\bfmX$ is 
described by its initial state $X(0)=x\in\state$
and its transition semigroup:
For any $t\ge 0$, $x\in\state$,  $A\in \clB$,
$$
P^t(x,A):=\Prob_x\{X(t)\in A\}:=\Pr\{X(t)\in A\,|\,X(0)=x\},
$$
with the usual convention that $P^1$ is simply denoted $P$.
For the Markov chain described by~\eqref{e:SP_disc},  it follows that $
P(x,A) = \Pr\{a(x,N(1)) \in A\}$.

Recall that the kernel $P^t$
acts as a linear operator on functions $f:\state\to\RL$ on the
right and on signed measures $\nu$ on $(\state,\clB)$ on
the left, respectively, as, 
$$
P^tf\,(x)=\int f(y)P^t(x,dy),
\;\;\;\;
\nu P^t\, (A) = \int \nu(dx)P^t(x,A),
\;\;x\in\state,\,A\in\clB,
$$
whenever the integrals exist.  Also, for any
signed measure $\nu$ on $(\state,\clB)$ and any
function $f:\state\to\RL$ we write $\nu(f):=\int f d\nu$,
whenever the integral exists.  In this paper 
we constrain the domain of functions $f$ to a Banach space 
defined with respect to a weighted $L_\infty$ norm.  

Specifically, given a fixed continuous function $v\colon\state\to [1,\infty)$,
the $v$-norm of any measurable function $f\colon\state\to\Re$
is denoted,
\begin{equation}
\|f\|_v \eqdef \sup_x  \frac{|f(x)|}{v(x)},
\label{e:Lvx}
\end{equation}
and the corresponding
Banach space $\Lvx$ is defined as,
$\Lvx  \eqdef \{ f\colon\state\to\Re : \|f\|_{v} <\infty\}$. 
An analogous weighted norm is defined for
signed measures $\mu$ on $(\state,\clB)$
via,
\[
\|\mu\|_v
	:=\sup\Big\{\frac{|\mu(h)|}{\|h\|_v}  :  
	h\in \Lvx,\ \|h\|_{v}\neq 0\Bigr\},
\]
and we denote by $\clM_1^v$ 
the space of signed measures $\mu$ with $\|\mu\|_v<\infty$.

The Markov chain $\bfmX$ is
\textit{$v$-uniformly ergodic} \cite{MT,konmey03a}
whenever there exists a function $v$,  
a unique invariant probability measure 
$\pi$, and constants $b_0 < \infty$ and $0<\rho_0 < 1$,  such that,
for each function $f\in\Lvx$,
\begin{equation}
\big| \Expect[f(X(t))\mid X(0)=x] - \pi(f) \big| \leq b_0 \rho_0^t  
\|f\|_v v(x) ,\quad t\ge 0\,,
\label{e:v-uni}
\end{equation}
where $\pi(f)=\int f d\pi$.
It is well known that this is equivalent to the existence 
of a Lyapunov function satisfying the drift condition~(V4) of \cite{CTCN}. 

\subsection{Motivation and background}

\archive{\bl{ad: $\alpha$ is not defined.}}

Let $c:\state\to\Re$ be a given function on the state space
of $\bfmX$. One starting point for the
classical study of the long-term behaviour of 
$\bfmX$ is the development of conditions for
the existence of 
the mean ergodic limit,
\begin{equation}
\barc := \lim_{n\rightarrow\infty} \frac{1}{n} 
\sum_{t=0}^{n-1} \Expect [c(X(t)) \mid X(0) = x]\, ,
\label{e:barc}
\end{equation}
and of the function,
\begin{equation}
h(x):=  \sum_{t=0}^{\infty} \Expect [c(X(t)) -\barc \mid X(0) = x]
\, , \quad x\in\state,
\label{e:fish-sum}
\end{equation}
which can be shown to be a solution of the associated
{\em Poisson equation},
\begin{equation}
h(x) - \Expect [h(X(1)) \mid X(0) = x]
= c(x)-\barc
\, , \quad x\in\state.
\label{eq:Poisson}
\end{equation}
For example, if $\bfmX$ is $v$-uniformly ergodic,
then in addition to the convergence~(\ref{e:v-uni}) of $P^t(x,\cdot)$ 
to its unique invariant probability measure $\pi$,
the ergodic averages of $c(X(t))$ converge a.s.\
to $\barc=\pi(c)$, and their associated central-limit-theorem
variance is naturally expressed in terms of $h$ \cite{MT,asmgly07},
$$\sigma^2=\pi\left(h^2-(Ph)^2\right).$$
Moreover, if $c\in\Lvx$ then $h$ is also in $\Lvx$ 
\cite{glymey96a}.

A closely related object of 
interest is the collection, for each $\alpha\in(0,1)$, 
of the functions,
\begin{equation}
h_\alpha(x) :=  \sum_{t=0}^{\infty} \alpha^t 
\Expect [c(X(t)) \mid X(0) = x]
\, , \quad x\in\state,
\label{e:DCOE_disc}
\end{equation}
where each $h_\alpha$ can be viewed as the result of the action
of the resolvent kernel, 
$$R_\alpha:=\sum_{t=0}^\infty\alpha^tP^t,$$ 
on the function $c$. Again, under $v$-uniform ergodicity,
$h_\alpha\in\Lvx$ for any $\alpha<1$,
whenever $c\in\Lvx$ \cite{konmey03a}.

The main goal of the present work is to develop natural conditions
that guarantee appropriate {\em smoothness} properties 
of $\barc$, $h$ and $h_\alpha$. In particular, as described next,
we show that the derivative of $P^tc(x)=\Expect[c(X(t))|X(0)=x]$ 
with respect to the
initial condition $X(0)=x$ converges to zero; we provide series 
representations, analogous to (\ref{e:fish-sum}) 
and (\ref{e:DCOE_disc}), for the {\em derivatives} of $h$ and 
$h_\alpha$; and we also obtain bounds 
for those derivatives.

In addition to the theoretical interest 
of these results, we are also motivated in part by related
questions and applications in stochastic control.
In that context, 
$c$ is viewed as a {\em one-step cost function},
$\alpha$ is the {\em discount factor}, 
$\barc$ is the {\em average cost}, 
$h(x)$ is the \emph{relative value function},
and $h_\alpha(x)$ is the {\em total discounted cost}.
The present results provide a foundation for 
a new approach to approximate dynamic programming developed 
in \cite{devmey16a}, and to gain approximation for the feedback 
particle filter \cite{laumehmeyrag15,raddevmey16}.

%

\subsection{Overview of main results}

Suppose that the function  $a$ appearing in \eqref{e:SP_disc}
is continuously differentiable.   This justifies the following definition of 
the $\ell\times \ell$ \textit{sensitivity process} 
$\bfmS=\{\Sens(t):t\geq 0\}$,  whose $(i,j)$ component 
is defined at time $t$ by:
\begin{equation}
\Sens_{i,j} (t) \eqdef \frac{\partial X_i(t)}{\partial X_j(0)}, \quad 1\leq i,j \le \ell.
\label{e:SensDef}
\end{equation}
From \eqref{e:SP_disc}, the sensitivity process evolves
according to the random linear system,
\begin{equation}
\begin{aligned}
\Sens(t+1) &= \dffA(t+1) \Sens(t) ,\quad \Sens(0)=I\,, 
\end{aligned}
\label{e:SP_sens_disc}
\end{equation}
where $\dffA^\transpose(t) \eqdef \nabla_x a\, (X(t-1),N(t))$.

For any function $f\in C^1$ and all $t\ge 0$,
we write:
\begin{equation}
\tilgrad f \, (X(t)) \eqdef \Sens^\transpose(t) \nabla f(X(t))\, .
\label{e:tilgrad}
\end{equation}
It follows from the chain rule that this coincides with the gradient of 
$f(X(t))$ with respect to the initial condition $X(0)$.  This interpretation 
of \eqref{e:tilgrad} motivates the introduction of a new semigroup 
$\{Q^t: t\ge 0\}$ of operators acting on measurable  functions 
$g: \state \to \Re^\ell$:
For $t\ge 1$, 
\begin{equation} 
Q^t g(x) \eqdef \Expect \bigl[ \Sens^\transpose(t) 
g(X(t))\mid X(0)=x  \bigr]\, ,
\label{e:Qt} 
\end{equation}
and  $Q^0g =g$.
Provided we can exchange the gradient and the expectation,  
and writing as usual $\Expect_x(\cdot)$ for the
conditional expectation $\Expect(\cdot|X(0)=x)$,
\begin{equation*}
\frac{\partial}{\partial x_i} \Expect_x[f(X(t)) ]  
= \Expect_x \bigl[ [\tilgrad f(X(t))]_i  \bigr],
\label{e:tilgradMean}
\end{equation*}
which implies that:
$$  \nabla P^t f(x)  = \Expect_x [\tilgrad f(X(t))]  = Q^t \nabla f\,(x).$$

\medskip

\noindent
{\bf Main results. }
The main contribution of this paper is the justification
of the above manipulations, within an appropriate Banach space 
setting. Specifically, for all functions                                    
$c:\state\to\RL$ in an appropriate space, 
we identify general, natural conditions under 
which the following are established:
\begin{romannum}
\item  
Not only does $P^t c$ converge to $\barc=\pi(c)$ as $t\to\infty$, 
but also the gradient $\nabla P^t c$ of $P^tc$ with respect to 
the initial condition $X(0)=x$ converges to zero, at a uniformly 
geometric rate;
cf.~Theorem~\ref{t:ergoMain}.

\item
The solution $h$ of the Poisson equation defined in~(\ref{e:fish-sum})
is differentiable, and the following
representation is obtained in Theorem~\ref{t:fish}
for its gradient,
$$\nabla  h = \Omega\nabla  c \eqdef \sum_{t=0}^{\infty} 
Q^t\nabla  c,$$
where $\{Q^t\}$ is the semigroup defined
in~(\ref{e:Qt}) in terms of the sensitivity process.

\item 
Similarly, for any $\alpha\in(0,1)$, the following 
representation is obtained in Theorem~\ref{t:discfish} 
for the gradient of the total discounted cost $h_\alpha$
defined in~(\ref{e:DCOE_disc}):
\begin{equation}
\nabla  h_\alpha = \Omega_\alpha \nabla  c \eqdef \sum_{t=0}^{\infty} \alpha^t Q^t  \nabla  c .
\label{e:OmDream}
\end{equation} 
\end{romannum}

\subsection{Prior research}

The sensitivity process defined in \eqref{e:SensDef} is used to 
define the Lyapunov exponent,
\[
\Lambda_1 := \lim_{t\to\infty} \frac{1}{t} \log ( \|  \Sens(t) \| );
\]
here and in (\ref{eq:Lambdap},\ref{e:Assumption4:haimat08}),
$\|\cdot\|$ can be taken to be
any matrix norm.
A negative exponent implies a topological notion of coupling: 
Suppose that $\Lambda_1$ is a negative constant, independent of 
the initial condition.  If $\bfmX$ and $\bfmX'$ are two realizations 
of the Markov chain with 
different initial states, it follows from the mean value theorem that,
with probability one,
\[
\lim_{t\to\infty} \| X(t) - X'(t) \|_2 = 0,
\]
and that this convergence is geometrically fast,
with rate $e^{t\Lambda_1}$. 
Much of the earlier relevant research, including the study
of the corresponding $p$th mean,
\begin{equation}
\bar{\Lambda}_p 
:= \lim_{t\to\infty} \frac{1}{t} \log \big(\Expect_x
\big[ \|  \Sens(t) \|^p\big]  \big),
\label{eq:Lambdap}
\end{equation}
is for diffusion processes in continuous time \cite{kun90,bax89,arncra91}. 

Verifiable conditions for a negative Lyapunov exponent are established in
\cite{atazei97} for a class of hidden Markov models, and in \cite{huakonmey02a} for a general class of stochastic sequences of the form~\eqref{e:SP_disc};
coupling results that suggest a negative Lyapunov exponent  
are established in \cite{haimat08} for a class of diffusions.    
In these papers the main results are established 
\textit{without} $\psi$-irreducibility.   As discussed 
in \cite[Section~6.4]{MT},  in such cases it is impossible to establish 
convergence of  the Markov semigroup in total variation, so it is natural 
to instead rely on topological notions of convergence or coupling.  

The prior work most closely related to our results is \cite{haimat08},
although the development there is entirely for continuous-time
processes. A central goal of \cite{haimat08} is to obtain coupling 
bounds in a topological sense for a class of stochastic Navier-Stokes 
equations, but results for a general stochastic flow on a 
Banach space are also derived. Assumption~4 of \cite{haimat08} 
imposes a contraction bound directly on the sensitivity process,
which, in the finite-dimensional case
implies that, for a function $v\ge 1$,   
constants $k<\infty$, $\rho<1$, and some time $t_0>0$,    
\begin{equation}
\Expect_x [v(X(t_0)) ]  +\Expect_x [  \|  \Sens (t_0)   \|   v(X(t_0)) ]    \le k v(x)^\rho  \,,\qquad x\in\state\, .
\label{e:Assumption4:haimat08}
\end{equation}
This and other assumptions imply the desired coupling result in 
\cite[Theorem~3.4]{haimat08}, which is also shown to imply the
ergodic limit in our Theorem~\ref{t:ergoMain}.
And we should note that the weighted Sobolev norm $\|f\|_{v,1}$ used 
in Theorem~\ref{t:ergoMain} and throughout in this paper
(cf.~(\ref{e:fv1}) in Section~\ref{s:MainResults}
below), also appears as $\|f\|_{V^r}$  in \cite[p.~21]{haimat08}.

The present approach is complementary to \cite{haimat08}.  
Their conclusions are far stronger than those presented here:  
They obtain a very strong topological coupling of the process, 
and they do not require $\psi$-irreducibility.   However, 
these strong conclusions require strong assumptions.  Most significant is 
that their assumptions imply the contraction 
bound \eqref{e:Assumption4:haimat08}
that is not easily verified in applications,  and is unlike any assumption 
imposed in the present work. 

Although the key results of this paper are related in spirit 
to much of the prior work mentioned above, there are 
no formal implications, in either direction, 
to existing results that we are aware of.  
In particular, it is not known whether the assumptions imposed here 
can be used to establish any 
form of topological coupling. And, rather than the norm of 
the sensitivity process as in the definition of $\bar{\Lambda}_1$,
we obtain bounds on the 
expectation of the sensitivity process,
showing, for example, that 
for all $C^1$ functions $g\colon\state\to\Re$
in an appropriate Banach space,
the following can be uniformly bounded above: 
\[
  \limsup_{t\to\infty} \frac{1}{t} \log \bigl| \Expect_x\big\{
	[\Sens(t) \nabla g \, (X(t))]_i \big\}  \bigr|, \quad 1\leq i \le \ell.
\] 
The relationship between the limit theorems established in this paper and 
classical Lyapunov exponents is a topic of current research.
  
Our main assumptions are minor variants of those used in 
much of the prior work on ergodic theory for Markov chains.   In particular,  
the Lyapunov drift condition (DV3) is assumed throughout:  For nonnegative,
continuous functions $V\colon\state\to\Re_+$, $W\colon\state\to [1,\infty)$,   
$\delta>0$, and a compact set $C$:
\begin{equation}
\log \Expect\bigl[\exp\{V(X(t+1)) - V(X(t)\} \mid X(t) 
= x\bigr] \leq -\delta W(x)\,,\quad x\in C^c.
\label{e:DV3}
\end{equation} 
Condition (DV3) is
an essential ingredient 
in much of the prior work of Donsker and Varadhan   on large deviation theory 
for Markov  models  \cite{donvarI-II,donvarIII,donvarIV},
and it is used to bound rates of convergence for a Markov chain 
for both mean and ``multiplicative'' ergodic theory 
in \cite{balmey00a,konmey05a,konmey17a}.     
Also, the discrete-time counterpart of 
\cite[Assumption~4, equation~(15)]{haimat08} implies
\eqref{e:Assumption4:haimat08} with $V$ having bounded sublevel sets,
and hence (DV3) for $W=V=\log v$.


The value of (DV3) is most clear when the sublevel sets of the function $W$ 
are compact.   In this case,  an $n$-step transition kernel can be 
approximated by its truncation to a compact set arbitrarily closely in an 
associated induced operator norm   \cite{balmey00a,konmey05a,konmey17a};
see also \cite{wu95a,wu00b} on the implications 
of truncation approximations.  
The main assumptions of the paper summarized in \Section{s:main} 
impose (DV3) and minor additional assumptions so that a truncation 
approximation is valid in the stronger norm used in this paper.

There has been increasing interest in finding connections between (DV3) and
logarithmic {Sobolev} or {Poincar\'e} inequalities 
\cite{gonwu06a,catguiwanwu08,catgui17}.  
The implications of this and similar drift conditions are the main focus 
of \cite{MT}.  In particular, in this monograph and subsequent papers 
\cite{glymey96a,konmey03a,konmey12a}, drift conditions are used  to obtain 
existence and bounds on solutions to Poisson's equation.  A log-Sobolev 
inequality   is the condition used in \cite{laumehmeyrag15} to establish the 
existence of a smooth solution to Poisson's equation for a diffusion.  
Somewhat more explicit sufficient conditions for a smooth solution 
to Poisson's equation are obtained in \cite{parver01} for elliptic diffusions.


Poisson's equation is one tool used in addressing parametric sensitivity 
in Markov chains, starting with the 50-year-old work of Schweitzer 
\cite{sch68};    \textit{infinitesimal perturbation analysis} is a well-known 
application of these techniques \cite{caoche97}.   A modern treatment is 
contained in the very recent work  \cite{rhegly17}.    The focus of this 
paper is on sensitivity with respect to the initial condition of the Markov 
chain rather than parametric uncertainty,  so there is no obvious 
relationship with this prior research.   

\medskip

The remainder of the paper is organized as follows: 
\Section{s:MainResults} summarizes the main results for the Markov 
chain \eqref{e:SP_disc};  these results are obtained under a Lyapunov 
condition slightly stronger than what is assumed in \cite{konmey05a}.   
\Section{s:spectral} contains proofs of the main results, leaving technical 
results to the Appendix.  \Section{s:wo} contains results for a 
general Markov chain,
not necessarily admitting the representation \eqref{e:SP_disc}.     

 
\section{Assumptions and Main Results}
\label{s:MainResults}

The four assumptions (A1)--(A4) introduced in this section include 
the existence of 
a Lyapunov function $V:\state\to(0,\infty)$ satisfying the drift 
condition (DV3) of \cite{konmey03a,konmey05a}; see condition~(A4) below.    
We denote $v= e^V$,  
which is used to define the norm $\|\varble\|_v$ in \eqref{e:Lvx}.

The weighted Sobolev spaces $\Lv{k}$
considered in this paper are based on a function-space norm 
that involves the derivatives of a function $f\colon\state\to\Re$.   
For each $k\ge 1$ denote,
\begin{equation} 
\| f\|_{v,k} = \max_{ |\alpha|\le k} \| D^\alpha f \|_v,
\label{e:fvk}
\end{equation}
where the maximum is over all multi-indices
$\alpha\in\nat^\ell$ with $\sum_i\alpha_i\leq k$,
and $D^\alpha$ is the 
corresponding partial derivative.  For $k=1$ this is a maximum over 
$\ell+1$ terms,
\begin{equation}
\| f\|_{v,1} = \max \Bigl\{ \|f\|_v,  \| \partial_1 f\|_v,\dots,    
\| \partial_\ell   f\|_v \Bigr\},
\label{e:fv1}
\end{equation}
where $\partial_i$ denotes the first partial derivative
with respect to $x_i$, $\partial/\partial x_i$.
For $k=0$ we let
$\Lv{0}$ denote the space $\{ f\in\Lvx: \text{$f$ is continuous} \}$
with norm $\|\varble\|_v$.  For each $k\ge 1$ we also
define the spaces,
\[
\Lv{k}  \eqdef \{ f\colon\state\to\Re : D^\alpha f \in\Lv{0} \ \text{for all} \  |\alpha|\le k \},
\]
equipped with the norm defined in \eqref{e:fvk}.
This introduces two new restrictions on any function $f\in \Lv{k} $:  
The $k$th partial derivatives of $f$ must exist and be absolutely bounded 
by a constant times $v$.  In addition, $f$ and these derivatives must be 
continuous.   In the special case $v\equiv 1$,  the space $\Lv{k}$ 
coincides with the usual Sobolev space $W^{k,1}$.
Throughout most of the paper we restrict 
attention to the cases $k=0$ and $k=1$.
In Proposition~\ref{t:Banach} we show that 
the normed spaces $\Lv{0}$ and $\Lv{1}$ are complete
and therefore are Banach spaces.

Consideration of the space $\Lv{1}$ requires the following 
assumptions on the evolution equations \eqref{e:SP_disc}. 
Assumption~(A1) 
ensures that the state at each time $t$ is  a continuously differentiable 
function of its initial condition $X(0)=x$, and justifies the representation 
of $\tilgrad f (X(t)) $ in~\eqref{e:tilgrad}. 
$$
\left. 
\mbox{\parbox{.85\hsize}{\raggedright
(i) The  process $\bfmN$ does not depend upon the initial condition $X(0)$.
 
(ii) The function $a$ is continuously differentiable in its first variable, 
with:
\[
\sup_{x,n} \| \nabla a (x,n)\| <\infty.
\]
}}
\right\}
\eqno{\hbox{\bf (A1)}}
$$
The notation $\| \varble \|$ in (ii) can represent any matrix norm, 
and the $j$th column of the $\ell\times\ell$ matrix  $\nabla a$ 
is equal to the gradient of $  a_j$, so that,
\[
[\nabla a (x,n)]_{i,j} \eqdef \frac{\partial }{\partial x_i} a_j (x,n) , \quad 1\leq i,j \leq \ell .
\]

\subsection{Irreducibility, densities and drift}
\label{s:irred}

The general ergodic theory of Markov chains as developed in \cite{MT} involves
two assumptions.  The first is a generalization of irreducibility as defined 
for finite-state space Markov chains, and the second is a 
Foster-Lyapunov drift condition. The irreducibility conditions will hold 
under assumptions~(A2) and~(A3);  the first is a density condition,
and the second is a ``reachability'' assumption:   
$$
\left. 
\mbox{\parbox{.85\hsize}{\raggedright
For some $t_0\ge 1$, the transition kernel admits  a smooth density.  
That is, there is a continuously differentiable function $p_{t_0}$ on $ \state\times \state$ such that,
\[
P^{t_0}(x,A) = \int_A p_{t_0}(x,y)\, dy\,,\qquad x\in\state,\ A\in\clB
\]
 }}
\quad\right\}
\eqno{\hbox{\bf (A2)}}
$$
%
%
\noindent
Under (A2), there is in fact a density for {\em every} $t\ge t_0$, given by:
%
\[
p_t(x,y) = 
\int P^{t-t_0}(x,dz) p_{t_0}(z,y), \qquad t\ge t_0,\quad x,y\in\state.
\]
The representation \eqref{e:SP_disc} implies the Feller property, i.e.,
that the  function $P^tf$ is continuous whenever $f$ is continuous and 
bounded. Assumption~(A2) implies the strong Feller property for $P^t$ 
whenever $t\ge t_0$:  The function  $P^tf$ is continuous whenever $f$ 
is measurable and  bounded; cf.~\Lemma{t:Feller} in the Appendix.
 
$$
\left. 
\mbox{\parbox{.85\hsize}{\raggedright
There is a state $x_0\in\state$ such that,
	for any $x\in\state$ and any open set $O$
	containing $x_0$, we have,
	$$
	P^t(x,O)>0,\qquad \hbox{for all $t\geq 0$ sufficiently large.}
	$$
	}}
\quad\right\}
\eqno{\hbox{\bf (A3)}}
$$
\noindent
Under assumptions~(A2) and~(A3), the chain is $\psi$-irreducible
and aperiodic, with $\psi(\varble)\eqdef P^{t_0} (x_0,\varble)$:
For all $x\in\state$ 
and all $A\in\clB$ such that $P^{t_0}(x_0,A)>0$,
we have,
$$P^t(x,A)>0, \;\;\;\;\;\;\mbox{for all $t\geq 0$
sufficiently large;}$$
see 
\cite[Theorem 6.2.1]{MT}.

Drift conditions are conveniently stated
in terms of the {\em generator} for $\bfmX$.  
In this discrete time setting, for measurable functions 
$f\colon\state\to\RL$ the generator is defined as,
$\clD f :=  P f - f$, that is,
\begin{equation*}
\clD f \, (x) := \Expect[f(X(t+1)) - f(X(t))\mid X(t) = x]
		\qquad x\in\state,
\end{equation*}
for any $f$ for which the expectation is defined for all $x$.  
Fleming's \textit{nonlinear generator} 
\cite{fle78a,fen99a,wu01,fenkur06a,konmey05a}  
is defined via,
\begin{equation}
\clH(F)\eqdef  \log(P e^F) - F,
\label{fle78a}
\end{equation}  
for any measurable function $F$ on $\state$ 
such that $Pe^F$ exists. 

We say 
\cite{konmey03a,konmey05a}
that the {\em Lyapunov drift criterion} (DV3)  
{\em holds with respect to the Lyapunov function 
$V:\state\to(0,\infty]$}, if
		there exist a function 
		$W\colon\state\to [1,\infty)$, 
                a compact set $C\subset\state$,
                and constants $\delta>0$, $b<\infty$,
such that,
$$
\clH(V)\leq -\delta W+b\ind_C\, .
\eqno{\hbox{(DV3)}}
$$
In most of the subsequent results, the following
strengthened version of (DV3) is assumed:
$$
\left. 
\mbox{\parbox{.85\hsize}{\raggedright
	Condition (DV3) holds with respect to
	 functions $V,W$ that are continuously differentiable and have
	compact sublevel sets.  
	}}
\quad\right\}
\eqno{\hbox{\bf (A4)}}
$$
Recall that the sublevel 
sets of a function $F\colon\state\to\Re_+$ are defined by,
\begin{equation*}
C_F(r) =\{ x\in\state : F(x) \le r\},\qquad r\ge 0. 
\end{equation*} 

\subsection{Results}
\label{s:main}

It is assumed throughout the remainder of this section that 
assumptions (A1)--(A4) hold.  It follows that the Markov chain is  
$v$-uniformly ergodic, with $v=e^V$, so that \eqref{e:v-uni} holds for 
a unique invariant probability measure $\pi$ \cite[Theorem~1.2]{konmey05a}.    
The first set of new results in this paper establish a similar conclusion in 
the Banach space $\Lv{1}$.   

\subsubsection{Ergodicity in $\Lv{1}$}
\label{s:lv1ergodic}

The induced operator norm for a linear operator $ \haP \colon\Lvx\to\Lvx$ 
is denoted.
\[
\lll \haP  \lll_v \eqdef 
 \sup \Bigl\{ \frac{\| \haP  f\|_{v}}{\|f\|_{v}} 
:  f\in \Lvx,\ \|f\|_{v}\neq 0\Bigr\}.
\]
On writing $\tilP^t=P^t-\One\otimes\pi$ or, equivalently ,
\[
\tilP^t(x,A) = P^t(x,A) - \pi(A),\qquad x\in\state,\ A\in\clB,
\]
 the bound \eqref{e:v-uni} is expressed as:
\[
\lll \tilP^t \lll_v \le b_0 \rho_0^t \,, \quad t\ge 0.
\] 
Similar notation is adopted for linear operators 
$ \haP \colon\Lv{k}\to\Lv{k}$:
   \[
\lll \haP  \lll_{v,k} \eqdef 
 \sup \Bigl\{ \frac{\| \haP  f\|_{v,k}}{\|f\|_{v,k}} 
:  f\in \Lv{k},\ \|f\|_{v,k}\neq 0\Bigr\}.
\]

Our main result here is the ergodicity of $\bfmX$ in $\Lv{1}$.
In fact, it is stated slightly more generally 
for all spaces $\Lve{1}$, defined as above
with respect to the function
$v^\eta = e^{\eta V}$, for any $\eta\in(0,1]$.

\begin{theorem}
\label{t:ergoMain}
Under assumptions {\em (A1)--(A4)}, for all $\eta \in (0,1]$, 
there is $b_0<\infty$, $t_1 < \infty$ and $\varrho_0<1$ such that:
\begin{equation}
 \lll \tilP^t   \lll_{v^\eta,1} \le b_0\varrho_0^t,\quad t\ge t_1.
\label{e:ergoMain}
\end{equation}
Consequently,  for each $f\in\Lve{1}$ and $t\ge t_1$,
\begin{equation}
\begin{aligned}
\big| \Expect_x[ f(X(t))]  - \pi(f) \big|   &  \leq b_0 \rho_0^t  \|f\|_{v^\eta,1} v^\eta(x),
\\[.2cm]
\text{\it and}\quad 
\Big|  \frac{\partial}{\partial x_i} \Expect_x[  f(X(t))] \Big|   &  \leq b_0 \rho_0^t  \|f\|_{v^\eta,1} v^\eta(x)  \,, \quad 1\le i\le \ell\, .
\end{aligned}
\label{e:v-uni-der}
\end{equation}
\end{theorem}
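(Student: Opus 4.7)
The plan is to split the norm $\|\cdot\|_{v^\eta,1}$ into its sup-norm part and its gradient part and treat each separately. Since $\pi(f)$ is a constant, $\nabla\tilP^t f=\nabla P^tf$, so it suffices to bound $|\tilP^tf(x)|/v^\eta(x)$ and $|\partial_i P^tf(x)|/v^\eta(x)$ uniformly in $x$ by $b_0\varrho_0^t\|f\|_{v^\eta,1}$.

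For the sup-norm part, Theorem~1.2 of \cite{konmey05a}, applied with $\eta V$ in place of $V$ (which itself satisfies (DV3) by Jensen's inequality, since $x\mapsto x^\eta$ is concave for $\eta\in(0,1]$), yields $v^\eta$-uniform ergodicity: $\lll\tilP^t\lll_{v^\eta}\le b_0\rho_0^t$, which contributes the sup-norm portion of \eqref{e:ergoMain} via $\|f\|_{v^\eta}\le\|f\|_{v^\eta,1}$. For the gradient part, I would first use (A1) to justify differentiating under the expectation: (A1)(ii)'s uniform bound $K=\sup_{x,n}\|\nabla a\|<\infty$ gives the pathwise estimate $\|\Sens(t)\|\le K^t$, and together with (A1)(i) and the continuity of $\nabla f$ this delivers the chain-rule identity $\nabla P^tf(x)=Q^t\nabla f\,(x)$. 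The gradient estimate then reduces to proving a geometric contraction for the sensitivity semigroup,
\[
\sup_x\frac{|[Q^tg(x)]_i|}{v^\eta(x)}\le b_0\varrho_0^t\max_j\|g_j\|_{v^\eta},\qquad 1\le i\le \ell,
\]
applied to $g=\nabla f$ and using $\max_j\|\partial_j f\|_{v^\eta}\le\|f\|_{v^\eta,1}$.

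The crux is this geometric contraction of $Q^t$, which I would establish by adapting the classical drift-plus-minorization argument to the sensitivity semigroup. The drift step would derive a bound of the form
\[
\Expect_x\bigl[\|\Sens(1)\|\,v^\eta(X(1))\bigr]\le \kappa_\eta v^\eta(x)+b\ind_C(x),
\]
by combining (DV3) with $\|\dffA(1)\|\le K$ from (A1)(ii), and trading the multiplicative factor $K$ against the exponential tail $e^{-\eta\delta W(x)}$ available from (DV3) on $C^c$ to force $\kappa_\eta<1$ outside a larger compact set. Iterating this drift and combining with a smoothing/minorization estimate for $Q^{t_0}$ derived from the density $p_{t_0}$ in (A2) and the reachability condition (A3) would then yield a spectral gap for $Q^t$ on the appropriate Banach space of vector fields by a quasi-compactness argument, giving \eqref{e:ergoMain}. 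The main obstacle is the drift step itself: since $\|\Sens(1)\|\le K$ is bounded but typically $\ge 1$, naive iteration only gives the useless bound $\|\Sens(t)\|\le K^t$, so contraction cannot come from the sensitivity factor and must instead be extracted from the exponential tail of $V$. The multiplicative form of (DV3) is essential here; making this tradeoff precise, uniformly in $\eta\in(0,1]$, will require a careful Jensen/H\"older splitting of $\Expect_x[\|\Sens(t)\|v^\eta(X(t))]$ into contributions from within and outside the drift set $C$.
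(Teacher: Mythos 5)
Your approach is genuinely different from the paper's, and it has a gap in the step you yourself flag as the crux. The paper never attempts a drift-plus-minorization argument for the sensitivity semigroup $Q^t$. Instead it establishes that $P^{t_1}$ is \emph{separable} in $\Lve{1}$ (Theorem~\ref{t:Pt1vSeparable}), via smooth truncation of the $t_1$-step kernel onto a compact set and Bernstein-polynomial approximation of the resulting density in the $C^1$ topology (Lemmas~\ref{t:haPv1Separable}, \ref{t:P2truncation}, \ref{t:PfandQtruncation}). Separability yields a discrete spectrum in $\Lve{1}$ (Theorem~\ref{t:sepDiscrete}); since $\Lve{1}\subseteq L_\infty^{v^\eta}$, the eigenvalues of $\tilP$ in $\Lve{1}$ are a subset of those in $L_\infty^{v^\eta}$, and the latter are known to have modulus strictly below $1$ from $v$-uniform ergodicity. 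This chain of implications --- separability $\Rightarrow$ discrete spectrum $\Rightarrow$ spectral inclusion $\Rightarrow$ spectral radius $<1$ --- is what replaces the regeneration argument you are proposing.

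The gap in your plan is the ``smoothing/minorization estimate for $Q^{t_0}$'' and the quasi-compactness step that follows it. Your drift step is correct: with $K=\sup\|\nabla a\|$ from (A1)(ii), Jensen applied to (DV3) gives $K\,P v^\eta(x)\le K v^\eta(x)e^{-\eta\delta W(x)}$ off $C$, and since $W$ has compact sublevel sets under (A4), enlarging $C$ to $C'$ forces $K e^{-\eta\delta W}\le\kappa_\eta<1$ on $(C')^c$. But iterating this drift gives
\[
\Expect_x\bigl[\|\Sens(t)\|\,v^\eta(X(t))\bigr]
\le \kappa_\eta\,\Expect_x\bigl[\|\Sens(t-1)\|\,v^\eta(X(t-1))\bigr]
+b''\,\Expect_x\bigl[\|\Sens(t-1)\|\,\ind_{C'}(X(t-1))\bigr],
\]
and the inhomogeneous term does not decay: unlike $P^{t-1}(x,C')$, which is centered away by subtracting $\pi(C')$ in the classical argument, there is no invariant object for $Q^t$ against which to center, and $\|\Sens(t-1)\|$ rides along. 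The drift alone therefore yields only \emph{boundedness} of $\lll Q^t\lll_{v^\eta}$, not decay. To get decay you need the regeneration mechanism, and here the proposal has no footing: $Q^t$ is matrix-valued and signed, so it is not a sub-probability kernel, and the Nummelin splitting and small-set technology simply do not apply to it. (Passing to the extended chain $(X(t),\Sens(t))$ does not help either: the second coordinate is a deterministic functional of the noise path given $X(0)$, so there is no useful density or minorization in that direction, and the candidate Lyapunov function $\|S\|\,v^\eta(x)$ does not have compact sublevel sets on $\state\times\Re^{\ell\times\ell}$.) The paper sidesteps all of this by proving quasi-compactness directly for $P^{t_1}$ on the scalar Sobolev space $\Lve{1}$, recovers the bound on $\nabla P^t f = Q^t\nabla f$ \emph{as a consequence}, and is careful to claim decay of $Q^t$ only on the range of the gradient operator (see the remark after Theorem~\ref{t:fish}), not on all of $(L_\infty^{v^\eta})^\ell$ as your contraction would assert. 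To complete your route you would need either to invent and verify a minorization condition for the signed kernel $Q^{t_0}$, or to re-derive a separability-type estimate for $Q^{t_0}$ --- at which point you would essentially be reproducing the paper's argument for $P^{t_1}$ in disguise.
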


The proof of the theorem, given in \Section{s:spectral},
is similar to the proof of $v$-uniform ergodicity in prior 
work \cite{konmey03a}. Under assumptions~(A1)--(A4) it is 
shown that the semigroup generated by $\tilP$ has a 
discrete spectrum in $\Lve{1}$,  with spectral radius 
strictly bounded by unity.   

In several of our subsequent results we will need to restrict 
attention to the spaces $\Lve{1}$ for $\eta$ strictly less than 1.
This is justified by the following proposition, 
stated here without proof; it is
a simple consequence of the convexity of the operator $\clH$.

\begin{proposition}
	\label{t:DV3eta}
	If the bound in condition {\em (DV3)}
	 holds, then the same bound holds for any scaling by $\eta\in (0,1)$:
\[
	\clH(\eta V)\leq -\delta \eta W+b\eta \ind_C\, .
\]
\end{proposition}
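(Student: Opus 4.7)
The plan is to exploit the convexity of $\clH$ as a functional of $F$, which is essentially Hölder's inequality in disguise. Once convexity is in hand, the result reduces to interpolating between $F=V$ and $F=0$.

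First I would verify convexity of $\clH$. Given two measurable functions $F_1, F_2$ (for which $Pe^{F_i}$ is finite) and $\lambda \in [0,1]$, Hölder's inequality applied pointwise under the kernel $P(x,\cdot)$ gives
\[
Pe^{\lambda F_1 + (1-\lambda)F_2}(x) \;=\; P\bigl((e^{F_1})^\lambda (e^{F_2})^{1-\lambda}\bigr)(x) \;\leq\; \bigl(Pe^{F_1}(x)\bigr)^\lambda \bigl(Pe^{F_2}(x)\bigr)^{1-\lambda}.
\]
Taking logarithms and subtracting $\lambda F_1(x) + (1-\lambda) F_2(x)$ yields
\[
\clH\bigl(\lambda F_1 + (1-\lambda) F_2\bigr) \;\leq\; \lambda\clH(F_1) + (1-\lambda)\clH(F_2),
\]
so $\clH$ is convex.

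Next I would evaluate $\clH$ at the zero function: since $Pe^{0}(x) = P\mathbf{1}(x) = 1$, we have $\clH(0) = \log 1 - 0 = 0$. Writing $\eta V = \eta \cdot V + (1-\eta)\cdot 0$ for $\eta \in (0,1)$ and applying the convexity inequality gives
\[
\clH(\eta V) \;\leq\; \eta\,\clH(V) + (1-\eta)\,\clH(0) \;=\; \eta\,\clH(V).
\]
Finally, substituting the (DV3) bound $\clH(V) \leq -\delta W + b\ind_C$ on the right produces $\clH(\eta V) \leq -\delta\eta W + b\eta \ind_C$, which is the desired inequality.

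There is no substantive obstacle: the only step worth flagging is the Hölder/convexity observation, which is standard but is what makes $\clH$ behave like a cumulant generating functional. Note that no additional structural hypothesis beyond the existence of $Pe^V$ is needed, which is why the proposition is stated without invoking (A1)--(A4).
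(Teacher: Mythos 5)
Your proof is correct and is exactly the argument the paper has in mind: the paper states the proposition without proof, remarking only that it ``is a simple consequence of the convexity of the operator $\clH$,'' and your Hölder-based verification of that convexity, combined with $\clH(0)=0$ and interpolation between $V$ and $0$, is precisely that simple consequence spelled out.
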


\subsubsection{Poisson's equation}

For $c\in\Lvx$ the sum \eqref{e:fish-sum} converges in $\Lvx$, 
and $h$ is a solution to the Poisson's equation~(\ref{eq:Poisson}):
$h-Ph=c-\barc$; cf~\cite{glymey96a}. Under appropriate conditions,
we show here that the gradient of $h$ also exists.


Formal term-by-term differentiation of the
definition of $h$ in (\ref{e:fish-sum})
yields:
\begin{equation}
\nabla h =  \sum_{t=0}^{\infty}  \nabla  P^t c.
\label{e:gradFishNoQ}
\end{equation}
This will in fact follow from \eqref{e:ergoMain}, 
once we could establish that $ \lll \tilP^t   \lll_{v,1}$ 
is finite for $t\geq t_1$.   
The proof of \Theorem{t:fish} is given in \Section{s:proofs}.
Recall the definition of the semigroup $\{Q^t\}$ in~(\ref{e:Qt}).
 
\begin{theorem}
\label{t:fish}
Suppose that $c\in\Lvex$, with $\eta\in (0,1]$.  Then, the function $h$ 
in \eqref{e:fish-sum} exists as an element of $\Lvex$. It is a solution 
to Poisson's equation~{\em (\ref{eq:Poisson})}, and it is unique among 
all functions in $\Lvex$ with $\pi$-mean equal to zero.

If $\eta<1$ then we obtain the following additional conclusions:
\begin{romannum} 

\item If $c\in\Lve{0}$   then $h\in\Lve{0}$.

\item If $c\in\Lve{1}$ then $h\in\Lve{1}$,  with gradient 
given in \eqref{e:gradFishNoQ}, and also,
\begin{equation}
\nabla h =\Omega\nabla c := \sum_{t=0}^\infty Q^t \nabla c.
\label{e:gradFish}
\end{equation}
\end{romannum}
\end{theorem}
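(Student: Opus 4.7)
The plan is to establish the three conclusions in order of increasing regularity, leveraging Proposition~\ref{t:DV3eta} and Theorem~\ref{t:ergoMain}. First I would prove the existence of $h\in\Lvex$ solving Poisson's equation together with uniqueness. By Proposition~\ref{t:DV3eta}, condition~(DV3) holds with Lyapunov function $\eta V$ for every $\eta\in(0,1]$, so the standard $v^\eta$-uniform ergodicity argument (along the lines of \cite{glymey96a,konmey05a}) yields constants $B<\infty$ and $\rho<1$ such that $\|P^t c-\pi(c)\|_{v^\eta}\le B\rho^{\,t}\|c\|_{v^\eta}$ for all $c\in\Lvex$. Geometric summation shows the partial sums $h_n:=\sum_{t=0}^{n-1}(P^t c-\pi(c))$ are Cauchy in $\Lvex$; completeness (Proposition~\ref{t:Banach}) gives $h_n\to h\in\Lvex$, and the identity $h-Ph=c-\pi(c)$ follows by telescoping together with $\|P^n c-\pi(c)\|_{v^\eta}\to 0$. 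Uniqueness in the zero-mean class reduces to showing that any harmonic $g\in\Lvex$ is zero: iterate $g=P^t g$ and use $v^\eta$-ergodicity to obtain $g(x)=\lim_t P^t g(x)=\pi(g)=0$.

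For part~(i), fix $\eta<1$ and $c\in\Lve{0}$. The $\Lvex$-convergence $h_n\to h$ from the previous step persists, so it suffices to verify that $h$ is continuous. Each summand $P^t c-\pi(c)$ is continuous: for $t\ge t_0$, assumption~(A2) supplies a continuously differentiable density $p_t$, and a dominated-convergence argument using the locally uniform bound $\int p_t(x,y)v^\eta(y)\,dy\le C v^\eta(x)$ (itself a consequence of (DV3) via Proposition~\ref{t:DV3eta}) yields continuity of $x\mapsto\int p_t(x,y)c(y)\,dy$; for $t<t_0$ the Feller property afforded by the continuity of $a$ in~(A1) suffices. Since the weight $v^\eta$ is locally bounded, $\Lvex$-convergence is uniform on compacta, and the uniform limit of continuous functions is continuous; hence $h\in\Lve{0}$.

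For part~(ii), with $\eta<1$ and $c\in\Lve{1}$, Theorem~\ref{t:ergoMain} supplies the stronger bound $\lll\tilP^t\lll_{v^\eta,1}\le b_0\varrho_0^{\,t}$ for $t\ge t_1$, so $(h_n)$ is Cauchy in the Sobolev norm $\|\cdot\|_{v^\eta,1}$ and hence converges to $h$ in $\Lve{1}$ by completeness. Because convergence in $\|\cdot\|_{v^\eta,1}$ entails convergence of the gradients in $\|\cdot\|_{v^\eta}$, one may differentiate term-by-term to obtain the representation~(\ref{e:gradFishNoQ}). The alternative formula~(\ref{e:gradFish}) then follows from the sensitivity identity $\nabla P^t c=Q^t\nabla c$ sketched before~(\ref{e:Qt}). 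The main obstacle, and the step where the strict inequality $\eta<1$ earns the needed margin, is the rigorous interchange of gradient and expectation underlying this identity: one must produce an integrable majorant for the random product $\Sens^\transpose(t)\nabla c(X(t))$ in order to differentiate $\Expect_x[c(X(t))]$ under the integral. The slack between $v^\eta$ and $v$, combined with (A1) (bounding $\|\nabla a\|$) and iterated application of (DV3), should furnish the uniform moment control on $\Sens(t)$ weighted by $v(X(t))$ required to dominate the finite-difference quotients as they pass to the limit.
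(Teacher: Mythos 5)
Your outline of the first part and of part~(i) matches the paper's proof: convergence of the partial sums in $\Lvex$, uniqueness via iterating Poisson's equation, and continuity of $h$ from local-uniform convergence of the continuous summands $P^t\tilde c$. (One small imprecision in part~(i): the Feller property alone only gives continuity of $P^t f$ for \emph{bounded} continuous $f$; since $c\in\Lve{0}$ may be unbounded, you do need the truncation/dominated-convergence step you gesture at, which is exactly what the paper packages into Theorem~\ref{t:Pt1vSeparable} via $P^t\colon\Lve{0}\to\Lve{0}$.)

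For part~(ii) the overall strategy is again the same — geometric convergence of $h_n=\sum_{t\le n}P^t\tilde c$ in $\|\cdot\|_{v^\eta,1}$, then term-by-term differentiation — but you diverge from the paper, and leave a real gap, at the pivotal identity $\nabla P^t c=Q^t\nabla c$. You propose to justify this by exhibiting an integrable majorant for the finite-difference quotients of $\Sens^\transpose(t)\nabla c(X(t))$, invoking (A1), the slack $\eta<1$, and ``iterated (DV3).'' That is a plausible route but it is not carried out: you would need a quantitative bound of the form $\Expect_x[\|\Sens(t)\|\,v^\eta(X(t))]\le C\,v^\eta(x)$, and deriving this from (A1)+(DV3) is nontrivial (it is close in spirit to the contraction hypothesis~\eqref{e:Assumption4:haimat08} that the authors explicitly say they do \emph{not} assume). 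The paper instead proves the identity by a smooth-cutoff argument: set $f_n=\Chi_n f$, note $\nabla(P^t f_n)=Q^t\nabla f_n$ holds trivially for compactly supported $f_n$, and then pass to the limit using dominated convergence together with the pointwise-limit lemma (Lemma~\ref{t:nablaPQx1}) and the already-established operator bounds $P^t,Q^t_{ij}\colon\Lve{0}\to\Lve{0}$ (this is Lemma~\ref{t:nablaPQlemma} and Theorem~\ref{t:Pt1vSeparable}). That route sidesteps the need for any moment bound on $\|\Sens(t)\|$ and is where the hard work of the paper actually lives; your proposal identifies the obstacle correctly but does not surmount it. Also note that you implicitly need $P^tc\in\Lve{1}$ for \emph{every} $t\ge 1$ before the $\|\cdot\|_{v^\eta,1}$-Cauchy argument even makes sense, so the interchange issue is logically prior to the summation step, not merely needed afterwards for the ``alternative formula.''
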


Note that, in the theorem, the boundedness of $\Omega$ is only established
on the space of functions of the form $\nabla f$ for some $f \in \Lv{1}$.
The first conclusion of the theorem
(that $h$ exists and uniquely solves Poisson's equation)
has been established in \cite{glymey96a,MT};  the remaining conclusions 
are new.  The proof of (ii) is based on a representation of 
the gradient of the semigroup $\{P^t\}$:   
for $f\in\Lve{1}$ with $\eta\in (0,1)$,  and $t\ge 1$:
\begin{equation} 
 \nabla P^t f = Q^t \nabla f \, . 
 \label{e:nablaPQ}
\end{equation} 
This 
and related results are given in \Theorem{t:Pt1vSeparable}.

Theorem~\ref{t:discfish}, given next, states that 
exactly analogous results to those established in \Theorem{t:fish}
for the solution $h$ to Poisson's equation, can also be established for 
the function $h_\alpha$, for any $\alpha\in (0,1)$. Its proof 
is essentially the same as that of \Theorem{t:fish}, and thus omitted.

\begin{theorem}
\label{t:discfish}
Suppose that $c\in\Lvex$, with $\eta\in (0,1]$.  Then, for each 
$0 < \alpha < 1$, the function $h_\alpha$ in \eqref{e:DCOE_disc} exists as 
an element of $\Lvex$.   
It is a solution to the following fixed point equation,
\begin{equation}
c + \alpha P h_\alpha - h_\alpha  = 0
\label{e:DPEquDisc}
\end{equation}
and it is unique among all functions in $\Lvex$.

If $\eta<1$ then we obtain the following additional conclusions:
\begin{romannum} 
\item If $c\in\Lve{0}$   then $h_\alpha \in\Lve{0}$.
\item If $c\in\Lve{1}$ then $h_\alpha \in\Lve{1}$,  with gradient given in \eqref{e:OmDream}:
\begin{equation}
\nabla  h_\alpha = \Omega_\alpha \nabla  c 
:= \sum_{t=0}^{\infty} \alpha^t Q^t  \nabla  c  
\end{equation}
\end{romannum}
\end{theorem}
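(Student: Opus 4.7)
The plan is to parallel the proof of Theorem~\ref{t:fish} throughout, with the factor $\alpha^t$ (where $\alpha\in(0,1)$) providing extra summability that simplifies every step. The starting point is the decomposition
$$
h_\alpha \;=\; \frac{\pi(c)}{1-\alpha} \;+\; \sum_{t=0}^{\infty} \alpha^t \tilP^t c,
$$
valid formally for $c\in\Lvex$. By Theorem~\ref{t:ergoMain}, $\lll\tilP^t\lll_{v^\eta,1}\le b_0\varrho_0^t$ for $t\ge t_1$, while (DV3) combined with Proposition~\ref{t:DV3eta} yields the uniform bound $\sup_n\lll P^n\lll_{v^\eta}<\infty$ via the standard drift-to-geometric argument. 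Hence the series converges absolutely in $\Lvex$, and in $\Lve{1}$ whenever $c\in\Lve{1}$. This single convergence argument delivers both existence of $h_\alpha$ and the improved regularity claimed in parts (i) and (ii).

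The fixed-point equation \eqref{e:DPEquDisc} is obtained by an index shift: since $P$ acts boundedly on $\Lvex$, it commutes with the convergent sum, so $\alpha P h_\alpha = \sum_{t=1}^\infty \alpha^t P^t c = h_\alpha - c$. For uniqueness, if $g_1,g_2\in\Lvex$ both solve \eqref{e:DPEquDisc}, then $d:=g_1-g_2$ satisfies $d=\alpha P d$, and hence $d=\alpha^n P^n d$ for every $n\ge 0$. Because $\sup_n\lll P^n\lll_{v^\eta}<\infty$ and $\alpha^n\to 0$, we conclude $d=0$ in $\Lvex$. Note that, in contrast with Poisson's equation, no mean-zero normalization is needed here: the discount factor by itself enforces uniqueness.

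For part (i), $P^t c$ is continuous for every $t$ by the Feller property guaranteed by (A1), and the tail $\sum_{t\ge N}\alpha^t\tilP^t c$ vanishes in $v^\eta$-norm as $N\to\infty$. This entails uniform convergence on compact sets, and uniform limits of continuous functions are continuous, so $h_\alpha\in\Lve{0}$. For part (ii), the identity \eqref{e:nablaPQ} gives $\nabla P^t c = Q^t\nabla c$, and Theorem~\ref{t:ergoMain} implies $\|Q^t\nabla c\|_{v^\eta}$ decays geometrically in $t$. Consequently, the partial sums $h_\alpha^{(N)}:=\sum_{t=0}^N\alpha^t P^t c$ converge in $\Lvex$-norm to $h_\alpha$, while their gradients $\sum_{t=0}^N\alpha^t Q^t\nabla c$ converge componentwise in $\Lvex$-norm to $\Omega_\alpha\nabla c$.

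The only substantive obstacle is upgrading this componentwise convergence to a statement about $\nabla h_\alpha$ itself. On any compact $K\subset\state$, the weighted convergence specializes to uniform convergence of both $h_\alpha^{(N)}$ and $\nabla h_\alpha^{(N)}$, so the classical theorem on termwise differentiation implies $h_\alpha\in C^1(K)$ with $\nabla h_\alpha = \Omega_\alpha\nabla c$ on $K$. Since $K$ was arbitrary and $\Omega_\alpha\nabla c$ lies componentwise in $\Lvex$, we obtain $h_\alpha\in\Lve{1}$ together with the representation \eqref{e:OmDream}, completing the argument.
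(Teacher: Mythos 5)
Your proof is correct and follows essentially the route the paper intends: the paper omits the proof of Theorem~\ref{t:discfish} entirely, stating only that it is ``essentially the same as that of Theorem~\ref{t:fish},'' and your argument is exactly that parallel, with the decomposition $h_\alpha = \pi(c)/(1-\alpha) + \sum_t\alpha^t\tilP^t c$ playing the role of centering by $\barc$, Theorem~\ref{t:ergoMain} and \eqref{e:nablaPQ} supplying the geometric bounds and the gradient identity, and the discount factor $\alpha^t$ replacing the mean-zero normalization in the uniqueness argument. The one step you pass over lightly is that for $\eta<1$ you need $P^t\colon\Lve{1}\to\Lve{1}$ for every $t\geq1$ (not just $t\geq t_1$) so that each partial sum lies in $\Lve{1}$; this is precisely Theorem~\ref{t:Pt1vSeparable}(ii) and should be cited explicitly.
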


Once again, the fact that $h_\alpha$ is bounded
and uniquely solves \eqref{e:DPEquDisc},
follows from earlier work 
\cite{CTCN}. The proofs of parts~(i) and~(ii) 
are essentially identical to the proofs of the 
corresponding results in \Theorem{t:fish}.

%
%
%
%
%
%
 %
%

\section{Spectral Theory}
\label{s:spectral}

\begin{proposition}
	\label{t:Banach}
	For any function $v\colon\state\to[1,\infty)$, 
	the normed spaces $\Lvx$, $\Lv{0}$ and $\Lv{1}$ 
	are each Banach spaces.
\end{proposition}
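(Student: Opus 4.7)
The norm axioms for $\|\cdot\|_v$ and $\|\cdot\|_{v,k}$ are routine (homogeneity, subadditivity, and non-degeneracy all follow from the corresponding properties of the supremum and of the derivative), so the plan is to focus entirely on completeness. I will handle the three spaces in the order $\Lvx \Rightarrow \Lv{0} \Rightarrow \Lv{1}$, since each reduction builds on the preceding one.

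First, for $\Lvx$, let $\{f_n\}$ be Cauchy in $\|\cdot\|_v$. Since $v\ge 1$, the bound $|f_n(x)-f_m(x)|\le v(x)\|f_n-f_m\|_v$ shows that $\{f_n(x)\}$ is Cauchy in $\Re$ for each $x$; define $f(x)$ to be its pointwise limit. Letting $m\to\infty$ in the Cauchy inequality and taking supremum over $x$ yields $\|f_n-f\|_v\to 0$, and a standard triangle-inequality argument gives $\|f\|_v<\infty$. Hence $\Lvx$ is complete.

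Next, for $\Lv{0}$, it suffices to show that it is a closed subspace of $\Lvx$. If $\{f_n\}\subset \Lv{0}$ and $f_n\to f$ in $\|\cdot\|_v$, then for any compact $K\subset\state$ the continuity and positivity of $v$ imply $M_K\eqdef\sup_{x\in K}v(x)<\infty$, and therefore $\sup_{x\in K}|f_n(x)-f(x)|\le M_K\|f_n-f\|_v\to 0$. Uniform convergence of continuous functions on compact sets produces a continuous limit, so $f\in\Lv{0}$.

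Finally, for $\Lv{1}$, let $\{f_n\}$ be Cauchy in $\|\cdot\|_{v,1}$. By~\eqref{e:fv1} each of the sequences $\{f_n\}$ and $\{\partial_i f_n\}$, $1\le i\le\ell$, is Cauchy in $\Lv{0}$, and so by the previous step converges in that space to limits $f$ and $g_i$, respectively. The key step, and the one I expect to be the main obstacle, is to verify that $g_i$ is genuinely the partial derivative $\partial_i f$ rather than merely an $L_v$-limit of derivatives. For this, fix $x\in\state$, a unit vector $e_i$, and $t\in\Re$. The classical fundamental theorem of calculus gives
\[
f_n(x+te_i)-f_n(x) \;=\; \int_0^t \partial_i f_n(x+se_i)\,ds.
\]
Because $\|\cdot\|_v$-convergence implies uniform convergence on the compact segment $\{x+se_i:s\in[0,t]\}$ (by the argument already used for $\Lv{0}$), both sides pass to the limit and yield $f(x+te_i)-f(x)=\int_0^t g_i(x+se_i)\,ds$. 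Continuity of $g_i$ then lets me differentiate at $t=0$ to conclude $\partial_i f(x)=g_i(x)$. Since $g_i\in\Lv{0}$ for each $i$, this gives $f\in\Lv{1}$ with $\|f_n-f\|_{v,1}\to 0$, completing the proof.
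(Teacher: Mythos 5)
Your proof is correct and follows essentially the same route as the paper: the paper treats completeness of $\Lvx$ and $\Lv{0}$ as elementary, and for $\Lv{1}$ it invokes Lemma~\ref{t:nablaPQx1}, whose content is precisely your fundamental-theorem-of-calculus argument identifying the $\|\cdot\|_v$-limit of $\partial_i f_n$ with $\partial_i f$. Your write-up is somewhat more explicit than the paper's (it spells out the first two steps and the justification for passing the limit through the integral), but the decomposition and the key idea coincide.
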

The proof  of \Proposition{t:Banach} is contained in \Section{s:proofs}.
The following subsection concerns spectral theory for an operator 
acting on one of these spaces. 

\subsection{Separability}
\label{s:sep}

A linear operator $T\colon \Lv{k}\to \Lv{k}$ has \textit{finite rank} 
if  there are functions $\{ s_i\} \subset\Lv{k}$, measures 
$\{\nu_j\}\subset \clM_1^v$, and constants $\{m_{ij}\}$ 
such that, 
\begin{equation}
T = \sum_{i,j=1}^N m_{ij} s_i\otimes\nu_j
\label{e:T}
\end{equation} 
where $[s\otimes\nu](x,dy) \eqdef s(x)\nu(dy)$, and $N < \infty$.
We say that a linear operator $\haP\colon\Lv{k}\to\Lv{k}$ 
is \textit{separable in $\Lv{k}$} 
if, for each $\epsy >0$,  there is a finite-rank linear operator $T$ such that  $\lll \haP - T \lll_{v,k} \le \epsy$.    

The \textit{spectrum} $\clS(\haP )\subset \Co$ of a linear operator $\haP \colon\Lv{k}\to\Lv{k}$
is the set of $z\in\Co$ such that the inverse $[Iz - \haP ]^{-1}$ 
does not exist as a bounded linear operator on $ \Lv{k} $.
\archive{Do we show that $\clS_{v^\eta}(P)=\clS_{v^\eta,1}(P)$ for $\eta<1$ ?
\\
Note in September 2017:  I don't know if we care.} 
The {\em spectral radius}
of the semigroup $\{\haP ^n\}$ is denoted
\begin{equation}
\xi_v (\haP ) \eqdef  \lim_{n\to\infty} \lll \haP ^n \lll_v^{1/n}.
\label{e:spectralRad}
\end{equation}
An element 
$z_0\in \clS(\haP )$ is called a
\textit{pole of (finite) multiplicity $n$} if,
for some $\epsy_{1}>0$,
\begin{romannum}  
	\item
	$z_0$ is isolated:   $\{ z\in \clS(\haP ) : |z-z_{0}| \le \epsy_{1} \} =\{z_{0}\}$;
	\item
The associated projection operator $\clP$ has finite rank, where:
\begin{equation}
	\label{e:projection}
	\clP\eqdef \frac{1}{2\pi i}\int_{\partial\{
		z:|z-z_0|\leq\epsy_1
		\}}[Iz-\haP ]^{-1}dz\,.
\end{equation}
%
\end{romannum}
For background see the  decomposition theorem in   
\cite[Theorem~4.4, p.~421]{rienag55}.

\archive{Nov 15:  I fear that Riesz and Nagy would not be happy with us.  We really should take $\nu_j$ in the dual space.  $ \clM_1^v$ is  the dual space for $\Lvx$ and not $\Lv{1}$.  
 Also, isn't it silly to introduce the $m_{ij}$?  After all, 
$\mu_i= \sum_{j=1}^N m_{ij} \nu_j $ lies in  $ \clM_1^v$ for each $i$.  }

The linear operator $\haP :\Lv{k}\to\Lv{k}$
has a {\em discrete spectrum in $\Lv{k}$} if,
for any compact set $C\subset\Co\setminus\{0\}$,
its spectrum $\clS$ has the property that
$\clS\cap C$ is finite and contains
only poles of finite multiplicity.

%
%

The above definition of separability is an extension of separability 
in $\Lvx$ for a  linear operator $\haP\colon \Lvx\to \Lvx$
as defined in \cite{konmey05a}, 
which requires that we can find, for each $\epsy>0$ a positive kernel 
of the form  \eqref{e:T} in which $\{ s_i\} \subset\Lvx$ and  
$\lll \haP  - T \lll_{v,k} \le \epsy$.  Besides the consideration of the 
Banach spaces $\Lv{k}$,  the definition here differs with  \cite{konmey05a} 
in two respects:  First,  positivity of $T$ is not assumed since the 
kernel $\haP $ may not be positive.   Second, in this prior work 
it was assumed that each function $s_i$ and measure $\nu_j $ had support 
on a compact set.   This is not necessary here or in the technical 
results of \cite{konmey05a}.

The following theorem is an extension of Theorem 3.5 of \cite{konmey05a} 
and it provides the fundamental connection between separability and ergodicity.
 

\begin{theorem}[Separability $\Rightarrow$ Discrete spectrum]
	\label{t:sepDiscrete}
	If the linear operator $\haP \colon\Lv{k}\to\Lv{k}$ is bounded and $\haP^{t_1} \colon\Lv{k}\to\Lv{k}$ is separable in $\Lv{k}$ for some $t_1 \ge 1$ and $k\ge 0$, then $\haP $ has a discrete spectrum in $\Lv{k}$.
\end{theorem}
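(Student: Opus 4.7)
The plan is to reduce the statement to the classical Riesz--Schauder theory for compact operators on a Banach space, and then transfer the spectral information from $\haP^{t_1}$ to $\haP$ via the polynomial spectral mapping theorem. By \Proposition{t:Banach}, $\Lv{k}$ is a Banach space, so the compact operator calculus applies.

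First I would show that $\haP^{t_1}$ is compact on $\Lv{k}$. Every finite-rank operator of the form \eqref{e:T} maps bounded sets into a finite-dimensional subspace, hence into a relatively compact set, so it is compact. The separability hypothesis is precisely that $\haP^{t_1}$ is an operator-norm limit of such finite-rank operators, and since the compact operators form a norm-closed subspace of the bounded operators on $\Lv{k}$, $\haP^{t_1}$ is compact. The Riesz--Schauder theorem (see \cite{rienag55}) then gives that $\clS(\haP^{t_1})$ is at most countable, has $0$ as its only possible accumulation point, and every nonzero element of $\clS(\haP^{t_1})$ is an isolated eigenvalue that is a pole of the resolvent of finite order, with finite-rank spectral projection.

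Next I would transfer these properties to $\haP$ using the polynomial spectral mapping identity,
\[
\clS(\haP^{t_1}) \;=\; \{z^{t_1} : z \in \clS(\haP)\}.
\]
Given a compact $C \subset \Co \setminus \{0\}$, the image $C^{t_1}$ is compact and bounded away from $0$, hence contains only finitely many points of $\clS(\haP^{t_1})$. Since $z \mapsto z^{t_1}$ is $t_1$-to-one on $\Co \setminus \{0\}$, the intersection $\clS(\haP) \cap C$ is finite, and each nonzero $z_0 \in \clS(\haP)$ is isolated. To verify the finite-rank condition on the Riesz projection \eqref{e:projection}, I would choose $\epsy_1$ so small that the disks of radius $\epsy_1$ about the points $z_0 e^{2\pi i j/t_1}$, for $j = 0,\ldots,t_1-1$, are pairwise disjoint and meet $\clS(\haP)$ only at those points themselves. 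By the holomorphic functional calculus, the Riesz projection of $\haP^{t_1}$ at $z_0^{t_1}$, which is finite-rank by the previous step, decomposes as the sum of the Riesz projections of $\haP$ at these $t_1$-th root preimages; each summand, in particular the one at $z_0$, is therefore finite-rank.

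The main technical obstacle is this last step: identifying the Riesz projection of $\haP^{t_1}$ at $z_0^{t_1}$ with the sum of Riesz projections of $\haP$ at the $t_1$-th roots $z_0 e^{2\pi i j/t_1}$ of $z_0^{t_1}$. This rests on a contour-integral manipulation using the factorization $z^{t_1}-z_0^{t_1} = \prod_{j=0}^{t_1-1}(z - z_0 e^{2\pi i j/t_1})$ together with the resolvent identity, and requires careful bookkeeping to deform contours around the preimages without crossing any other part of $\clS(\haP)$. Apart from this, the argument is a direct extension of the proof of \cite[Theorem~3.5]{konmey05a} from $\Lvx$ to $\Lv{k}$; positivity of the approximating kernels $T$ played no role in that proof and is dispensable here as well, the only new input being the Banach space structure of $\Lv{k}$ supplied by \Proposition{t:Banach}.
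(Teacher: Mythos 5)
Your argument is correct, and it follows the route the paper itself indicates: the paper does not spell out a proof of this theorem, but cites the decomposition theorem in Riesz--Nagy \cite[Theorem~4.4, p.~421]{rienag55} and describes the result as an extension of \cite[Theorem~3.5]{konmey05a}, and your proof --- separability gives compactness of $\haP^{t_1}$ (finite-rank operators are compact, compacts are norm-closed, with \Proposition{t:Banach} supplying the Banach-space structure), Riesz--Schauder theory gives the spectral structure of $\haP^{t_1}$, and the polynomial spectral mapping theorem transfers it to $\haP$ --- is exactly what those references prescribe. The one point you flag as a potential obstacle, identifying the Riesz projection of $\haP^{t_1}$ at $z_0^{t_1}$ with the sum of the Riesz projections of $\haP$ at the $t_1$-th roots, does not require the manual contour bookkeeping you describe: it is an instance of the composition theorem for the holomorphic functional calculus, $g(f(\haP)) = (g\circ f)(\haP)$ with $f(z)=z^{t_1}$ and $g$ equal to $1$ on a small disk about $z_0^{t_1}$ and $0$ on a neighborhood of the rest of $\clS(\haP^{t_1})$; the composite $g\circ f$ is then $1$ near precisely the $t_1$-th root preimages of $z_0^{t_1}$ in $\clS(\haP)$ and $0$ near the rest, so $(g\circ f)(\haP)$ is the sum of the corresponding Riesz projections of $\haP$, each of which inherits finite rank since they project onto complementary subspaces of a finite-dimensional range.
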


As in the prior work \cite{konmey05a}, separability of the $t$-step transition 
kernel  is established in two steps:  First, it is shown that it can be 
approximated by its truncation to a compact set,
and then the truncated kernel is shown to be separable.

A smooth truncation is the first step in the present paper:
A transition density is approximated using Bernstein polynomials to 
establish that the truncation is separable in $\Lv{1}$.
To simplify notation consider first a  $C^1$ function 
$\varphi\colon[0,1]^N\to\Re$,  with   $N\ge 2$.   
For an integer $m\ge 2$, the Bernstein approximation is given by:
\[
\varphi_m(z) = \sum_{j_1,\dots,j_n=0}^m \varphi\left(\frac{j_1}{m}, \dots, \frac{j_m}{m}\right)
\prod_{i=1}^n \binom{m}{j_i} z^{j_i} (1-z_i)^{m-j_i},\quad z\in[0,1]^N.
\]
The proof of the following can be found in \cite{kin51} for the special 
case $N=2$; also see \cite{but53,haupot77} for related results,
and \cite{majer:12} for a more recent discussion of the general case.


\begin{lemma}
\label{t:bernie} 
The Bernstein polynomials provide the following uniform approximation for any $C^1$ function $\varphi\colon[0,1]^N\to\Re$:
\[
\lim_{m\to\infty} \sup_z \|\varphi(z) - \varphi_m(z) \|_2 = 
\lim_{m\to\infty} \sup_{z,i} \left \|
\frac{\partial}{\partial z_i} \varphi(z) 
-\frac{\partial}{\partial z_i}  \varphi_m(z)\right \|_2 = 0\, .
\]
\end{lemma}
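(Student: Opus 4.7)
The plan is to use the probabilistic representation of Bernstein polynomials together with the classical identity for the derivative of the univariate Bernstein basis, reducing both statements to uniform continuity plus a Chebyshev-type second-moment estimate.

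Writing $B^m_j(z) = \binom{m}{j} z^j (1-z)^{m-j}$ for the univariate basis, the product structure of $\varphi_m$ yields the probabilistic representation
\[
\varphi_m(z) = \Expect\bigl[\varphi(S^{m,z}/m)\bigr],
\]
where $S^{m,z} = (S_1,\ldots,S_N)$ has independent components with $S_i$ Binomial$(m,z_i)$. Since $\varphi$ is uniformly continuous on the compact cube $[0,1]^N$ and
\[
\Expect\|S^{m,z}/m - z\|_2^2 \;=\; \frac{1}{m}\sum_{i=1}^N z_i(1-z_i) \;\le\; \frac{N}{4m},
\]
splitting the expectation over $\{\|S^{m,z}/m - z\|_2 \le \delta\}$ and its complement (uniform continuity on the first event, Chebyshev on the second) gives $\sup_z |\varphi_m(z) - \varphi(z)| \to 0$.

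For the derivatives, I would differentiate term by term using the standard identity $\tfrac{d}{dz} B^m_j(z) = m\bigl[B^{m-1}_{j-1}(z) - B^{m-1}_j(z)\bigr]$ (with the convention $B^{m-1}_{-1}\equiv B^{m-1}_m \equiv 0$), and then re-index the telescoping sum in $j_i$ to obtain
\[
\frac{\partial \varphi_m}{\partial z_i}(z) \;=\; m \sum_{k=0}^{m-1}\sum_{j_{-i}} \Delta_i^m\varphi(j_{-i},k)\, B^{m-1}_k(z_i) \prod_{\ell\neq i} B^m_{j_\ell}(z_\ell),
\]
where $\Delta_i^m\varphi(j_{-i},k) := \varphi\bigl((j_{-i},(k+1)/m)\bigr) - \varphi\bigl((j_{-i},k/m)\bigr)$. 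The mean value theorem replaces $m\cdot\Delta_i^m\varphi(j_{-i},k)$ by $\partial_i \varphi(\xi_{j_{-i},k})$ for some intermediate point $\xi_{j_{-i},k}$, and the resulting convex combination of values of $\partial_i\varphi$ is again an expectation: $\partial_i \varphi_m(z) = \Expect\bigl[\partial_i\varphi(\Xi^{m,z,i})\bigr]$, where $\Xi^{m,z,i}$ has $i$-th coordinate of the form $(\tilde S_i + U)/m$ with $\tilde S_i$ Binomial$(m-1,z_i)$ and $U\in[0,1]$, and other coordinates $S_\ell/m$ with $S_\ell$ Binomial$(m,z_\ell)$, all jointly independent.

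A direct second-moment computation then gives $\Expect\|\Xi^{m,z,i} - z\|_2^2 = O(1/m)$ uniformly in $z$ and $i$; since $\partial_i\varphi$ is uniformly continuous on $[0,1]^N$ (because $\varphi\in C^1$ on a compact set), the same Chebyshev-plus-splitting argument used in the first step yields $\sup_{z,i}\|\partial_i\varphi_m(z) - \partial_i\varphi(z)\|_2 \to 0$. The main technical obstacle is the bookkeeping of the mixed orders $m$ and $m-1$ that appear in the tensor product basis after differentiating in one coordinate only; routing the whole argument through the probabilistic representation, rather than manipulating the multinomial coefficients directly, keeps this entirely transparent.
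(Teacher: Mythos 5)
The paper does not prove Lemma 3.2 at all; it simply cites Kingsley \cite{kin51} for $N=2$, together with \cite{but53,haupot77,majer:12} for related and general-$N$ results, all of which argue by direct manipulation of the Bernstein basis. Your proposal supplies a self-contained and correct proof via the probabilistic (weak-law-of-large-numbers) interpretation, which is Bernstein's original device, combined with the classical derivative identity $\frac{d}{dz}B^m_j = m(B^{m-1}_{j-1}-B^{m-1}_j)$ and summation by parts to convert $\partial_i\varphi_m$ into a convex combination (hence an ``expectation'' with respect to a product of binomials of mixed orders $m$ and $m-1$) of values of $\partial_i\varphi$ at near-lattice points. The one place where you should be a bit more explicit is in interpreting this convex combination as an expectation: after the mean value theorem, the ``$U$'' offset in the $i$-th coordinate is a deterministic function of the index, not an independent random variable, so the clean statement is simply that $\partial_i\varphi_m(z)$ is a probability-weighted average of values $\partial_i\varphi(\xi_{j_{-i},k})$ with $\|\xi_{j_{-i},k} - (j_{-i},k)/m\|_\infty \le 1/m$; the Chebyshev tail bound then applies to the underlying binomial indices and the extra $O(1/m)$ perturbation is absorbed. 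With that reading the second-moment estimate you state is correct and uniform in $z$ and $i$, and the splitting argument goes through. Compared to the cited references, your route buys a proof that is elementary, uniform in the dimension $N$ with explicit $O(1/m)$ rate control, and avoids any multinomial-coefficient bookkeeping.
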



The truncated transition kernel will play the role of $\haP$ in the 
following lemma; its proof is given in Section~\ref{s:bernie}.

\begin{lemma}
\label{t:haPv1Separable}
Suppose $\haP$ has a density $r$ with respect to probability measure $\mu$: For each $x \in \state$ and $A \in \clB$, 
$\haP(x,A) = \int_{A} r(x,y)\mu(dy)$. Suppose moreover that the density $r: \Re^\ell \times \Re^\ell \to \Re^+$ is $C^1$ with compact support. Then, $\haP$ is separable in $\Lv{1}$.  
\end{lemma}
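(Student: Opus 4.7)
\textbf{Proof plan for Lemma~\ref{t:haPv1Separable}.}
The strategy is to use the Bernstein polynomial approximation provided by \Lemma{t:bernie} on the $2\ell$-dimensional density $r$. Each Bernstein polynomial decomposes naturally into a finite sum of products of univariate polynomials, which is precisely the tensor-product structure required for a finite-rank operator in the sense of \eqref{e:T}. First, after a translation and rescaling of coordinates (which does not affect any of the relevant function-space properties), we may assume that $\supp(r)\subset[1/4,3/4]^\ell\times[1/4,3/4]^\ell$. Applying \Lemma{t:bernie} with $N=2\ell$ produces polynomials $r_m$ on $[0,1]^{2\ell}$ such that both $r_m\to r$ and $\partial_{x_i}r_m\to \partial_{x_i}r$ uniformly on $[0,1]^{2\ell}$, and by construction each $r_m$ has the form $r_m(x,y)=\sum_j \sigma_j(x)\tau_j(y)$ with $\sigma_j,\tau_j$ polynomials.

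The main obstacle here is that the $r_m$ are polynomials on all of $\Re^\ell\times\Re^\ell$: used as integral kernels directly, the resulting operators would map into the space of polynomials in $x$, which generally fails to lie in $\Lv{1}$ unless $v$ grows polynomially fast. To fix this without destroying the product structure, I would introduce a smooth cutoff $\chi\colon\Re^\ell\to[0,1]$ with $\chi\equiv 1$ on $[1/4,3/4]^\ell$ and $\supp(\chi)\subset[1/8,7/8]^\ell$, and set $\tilde r_m(x,y)=\chi(x)r_m(x,y)\chi(y)=\sum_j[\chi(x)\sigma_j(x)][\chi(y)\tau_j(y)]$. Because $\chi(x)\chi(y)\equiv 1$ on $\supp(r)$, we have the pointwise identity $r(x,y)=\chi(x)r(x,y)\chi(y)$, which implies
\[
\sup_{x,y}|r-\tilde r_m|\ \longrightarrow\ 0,\qquad \sup_{x,y}|\partial_{x_i}(r-\tilde r_m)|\ \longrightarrow\ 0, \qquad m\to\infty,
\]
via the product rule and the uniform bounds on $\chi$ and $\nabla\chi$. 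Now define $T_m f(x)\eqdef\int \tilde r_m(x,y)f(y)\mu(dy)=\sum_j s_j(x)\nu_j(f)$ with $s_j(x)=\chi(x)\sigma_j(x)$ and $\nu_j(dy)=\chi(y)\tau_j(y)\mu(dy)$. Each $s_j$ is $C^1$ with compact support, hence in $\Lv{1}$; each $\nu_j$ is compactly supported with bounded density against the probability measure $\mu$, hence in $\clM_1^v$. Thus $T_m$ is finite-rank in the sense of \eqref{e:T}.

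It remains to bound $\lll \haP-T_m\lll_{v,1}$. Let $K$ denote the (compact) union of the supports of $\chi$ and of $r$ in the $y$-variable, and set $C_K\eqdef \mu(K)\sup_{y\in K}v(y)<\infty$. For $f\in\Lv{1}$ and every $x\in\state$,
\[
\frac{|(\haP-T_m)f(x)|}{v(x)}\le |(\haP-T_m)f(x)|\le \|r-\tilde r_m\|_\infty\int_K|f(y)|\mu(dy)\le C_K\|r-\tilde r_m\|_\infty\|f\|_v.
\]
Differentiation under the integral is justified because $r$ and $\tilde r_m$ are $C^1$ with a common compact support in $x$; the same calculation gives
\[
\frac{|\partial_{x_i}(\haP-T_m)f(x)|}{v(x)}\le C_K\|\partial_{x_i}(r-\tilde r_m)\|_\infty\|f\|_v, \qquad 1\le i\le\ell.
\]
Combining these and using $\|f\|_v\le\|f\|_{v,1}$ gives $\lll \haP-T_m\lll_{v,1}\to 0$, so for any prescribed $\epsy>0$ we choose $m$ sufficiently large to obtain $\lll \haP-T_m\lll_{v,1}\le\epsy$, establishing separability in $\Lv{1}$. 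The technical heart of the argument is the simultaneous preservation of three properties under the cutoff: the finite-rank (tensor-product) structure inherited from $r_m$, the $C^1$ regularity required to place $s_j$ in $\Lv{1}$, and the uniform control on derivatives coming from \Lemma{t:bernie}.
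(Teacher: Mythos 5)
Your proof is correct and follows essentially the same strategy as the paper: Bernstein polynomial approximation of the density, smooth compactly-supported cutoff to preserve both the finite-rank tensor structure and $C^1$ regularity, and a direct operator-norm estimate for $\haP - T$. The one substantive difference is the object you feed to the Bernstein approximation: you approximate $r$ itself and absorb the resulting loss into a finite constant $C_K = \mu(K)\sup_{K}v < \infty$, whereas the paper first forms $r_v(x,y) := r(x,y)v(y)$ and applies Bernstein to $r_v$, which folds the weight $v$ into the kernel and yields the bound $\lll\haP - T_\epsy\lll_{v,1}\le\epsy$ with constant exactly one. The paper's route requires $v$ to be $C^1$ (true in the ambient context of (A4), where $v=e^V$ with $V\in C^1$, but not stated in the lemma); your route needs only continuity of $v$, a minor gain in generality at the cost of a harmless constant. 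Both arguments are sound.
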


We thus have a roadmap to prove the main results. 
First we consider the case of Markov chains that may not have 
the representation \eqref{e:SP_disc}. 

\subsection{Separability implies ergodicity for general chains}
\label{s:wo}

In this subsection only we consider a general Markov chain evolving 
on $\state=\Re^\ell$, not necessarily of the 
form \eqref{e:SP_disc}. The goal is to generalize the results 
of \Section{s:MainResults}, and also provide an overview of the proofs 
of the main results surveyed there.
 
\Theorem{t:MainGeneral} states that separability in $\Lv{1}$ implies 
ergodicity in this weighted Sobolev space.   Sufficient conditions for 
separability are provided after the proof of the theorem.

\begin{theorem}
\label{t:MainGeneral}
Suppose that the Markov chain $\bfmX$ with transition kernel $P$ satisfies the following conditions, for a continuous function $v\colon\state\to[1,\infty)$: It is  $v$-uniformly ergodic, so that \eqref{e:v-uni}
 holds for each $f\in\Lvx$.  And, for some $t_1 \ge 1$,   $\lll P^t\lll_{v,1}<\infty$ for $t\ge t_1$, and $P^{t_1}$ is separable in $\Lv{1}$.
  
 Then the following conclusions hold:
\begin{romannum}
\item 
The Markov chain is ``ergodic in $\Lv{1}$'': 
		There is $b_0<\infty$ and $\varrho_0<1$ such that
\[
		\lll \tilP^t   \lll_{v,1} \le b_0\varrho_0^t,\quad t\ge t_1.
\]
\item  If, in addition, $\lll P^t\lll_{v,1}<\infty$ for $t \geq 1$, then, for any function $c\in\Lv{1}$, there is a solution to Poisson's equation $h\in\Lv{1}$, with gradient given in \eqref{e:gradFishNoQ}:
\[
\nabla h =  \sum_{t=0}^{\infty}  \nabla  P^t c.
\]
\end{romannum}
\end{theorem}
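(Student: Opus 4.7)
The plan is to extract the geometric decay claimed in (i) from spectral information about $P^{t_1}$ acting on $\Lv{1}$, and then to derive (ii) as an almost immediate consequence of the summability produced by (i). Discrete spectrum for $P^{t_1}$ on $\Lv{1}$ will come from separability via \Theorem{t:sepDiscrete}; the passage from ``discrete spectrum'' to ``spectral radius strictly inside the unit disc on the complement of the constants'' will then be forced by the hypothesis of $v$-uniform ergodicity of $\bfmX$ in $\Lvx$.

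For part (i), first apply \Theorem{t:sepDiscrete} to the bounded operator $\haP := P^{t_1}$ on $\Lv{1}$: its spectrum $\clS\subset\Co$ contains only finitely many poles of finite multiplicity outside any disc about $0$. Next I would show that the only point of $\clS$ with modulus $\geq 1$ is $\lambda = 1$: if $P^{t_1} f = \lambda f$ with $f \in \Lv{1}\setminus\{0\}$, then applying $\pi$ and using $\pi P = \pi$ yields $(\lambda-1)\pi(f)=0$, so $\pi(f)=0$ whenever $\lambda\neq 1$, but then the $v$-uniform ergodicity \eqref{e:v-uni} forces $\|P^{t_1 n}f\|_v\to 0$, contradicting $\|P^{t_1 n}f\|_v = |\lambda|^n\|f\|_v\geq \|f\|_v>0$. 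A parallel argument applied to a would-be generalised eigenvector at $1$ (using $(P^{t_1}-I)f = c\One$, whence $c=\pi((P^{t_1}-I)f)=0$) then shows that $1$ is simple, with spectral projection exactly $\clP_1 = \One\otimes\pi$.

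The remaining step in (i) is to convert this spectral picture into a norm bound. Since the spectrum of $\tilP^{t_1} = P^{t_1}-\One\otimes\pi$ coincides with $(\clS\setminus\{1\})\cup\{0\}$, its spectral radius on $\Lv{1}$ is some $r_0<1$, and the Gelfand formula yields $\lll\tilP^{t_1 m}\lll_{v,1}\leq C\varrho_1^m$ for some $\varrho_1\in(r_0,1)$ and every $m\geq 1$. For arbitrary $t\geq t_1$ write $t = t_1(m-1) + (t_1+r)$ with $0\leq r<t_1$ and $m\geq 1$, and use the commuting factorisation
\[
\tilP^t \;=\; \tilP^{t_1+r}\,\tilP^{t_1(m-1)}.
\]
Each exponent $t_1+r$ lies in $[t_1,2t_1)$, so by hypothesis the $t_1$ operators $\tilP^{t_1+r}$ are bounded uniformly in $r$, and combining with the geometric estimate above gives $\lll\tilP^t\lll_{v,1}\leq b_0\varrho_0^t$ with $\varrho_0 := \varrho_1^{1/t_1}$. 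The genuine obstacle in this chain of reasoning is eliminating spectrum on $\{|z|\geq 1\}\setminus\{1\}$: one must transfer information from the $\Lvx$ topology, where ergodicity is known, to the stronger $\Lv{1}$ topology, where a priori new eigenfunctions could appear; fortunately the argument only needs that $\Lv{1}\subset\Lvx$, so an $\Lv{1}$-eigenfunction is also an $\Lvx$-eigenfunction to which \eqref{e:v-uni} applies.

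For part (ii), the additional hypothesis $\lll P^t\lll_{v,1}<\infty$ for every $t\geq 1$ (together with the easy bound $\lll \One\otimes\pi\lll_{v,1}\leq \pi(v)<\infty$) extends the estimate of (i) to $\lll\tilP^t\lll_{v,1}\leq B\varrho_0^t$ for \emph{all} $t\geq 0$, by absorbing the finitely many initial terms. Hence, for any $c\in\Lv{1}$, the series $h := \sum_{t=0}^\infty \tilP^t c$ converges absolutely in $\Lv{1}$. Applying the bounded operator $P$ term by term and using $P\tilP^t = \tilP^{t+1}$ (which follows from $P\One=\One$) telescopes to
\[
h-Ph \;=\; \tilP^{0} c \;=\; c-\pi(c),
\]
so $h$ solves Poisson's equation. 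Finally, each component of $\nabla$ is continuous from $\Lv{1}$ to $\Lvx$ by the very definition of $\|\,\cdot\,\|_{v,1}$, so the same series may be differentiated term by term to yield $\nabla h = \sum_{t=0}^\infty \nabla P^t c$, the constant $\pi(c)$ contributing nothing.
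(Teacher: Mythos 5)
Your argument is correct and follows essentially the same path as the paper: separability $\Rightarrow$ discrete spectrum on $\Lv{1}$ via \Theorem{t:sepDiscrete}, then a spectral-gap argument leaning on $v$-uniform ergodicity in $\Lvx$, then Gelfand's formula on the $t_1$-skeleton and a finite-cover step to pass to all $t\ge t_1$, then a convergent series for $h$ and termwise differentiation for (ii). The paper is slightly slicker at the spectral-gap step: rather than ruling out unimodular eigenvalues of $P^{t_1}$ and analysing a Jordan chain at $1$ as you do, it observes that the nonzero spectrum of $\tilP^{t_1}$ on $\Lv{1}$ lies inside its spectrum on $\Lvx$ (since discrete spectral points are eigenvalues and $\Lv{1}\subset\Lvx$), and then simply quotes $\xi_v(\tilP^{t_1})<1$ from \cite[Theorem~2.4]{konmey05a}; your hands-on version is a correct unrolling of that, though when you invoke $(P^{t_1}-I)f=c\One$ you are implicitly using that the eigenspace at $1$ is spanned by $\One$, which deserves the same one-line subtraction argument you used for $\lambda\neq 1$. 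For (ii) the paper routes through \Lemma{t:nablaPQx1}, whereas you use the cleaner observation that $\partial_i$ is bounded from $\Lv{1}$ to $\Lvx$, so the $\Lv{1}$-convergent series may be differentiated termwise; both are valid.
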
  

Part (ii) of the theorem is based on the following:
\begin{lemma}
\label{t:nablaPQx1} 
For $\eta\in (0,1]$ suppose that $\{g_n\}\subset \Lve{1}$ satisfy $\sup_n \| g_n \|_{v^\eta,1}<\infty$ and
the following limits hold pointwise for continuous functions $g$ and $\zeta$:
\[
\lim_{n\to\infty} g_n(x) = g(x),\quad \lim_{n\to\infty} \nabla g_n(x) = \zeta(x),\quad x\in\state. 
\]
Then, $\nabla g = \zeta$ and $g\in \Lve{1}$.
\end{lemma}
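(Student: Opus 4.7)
The plan is to verify, in order: boundedness of $g$ and each component of $\zeta$ in the $v^\eta$-norm, identification of $\zeta$ with $\nabla g$ via the fundamental theorem of calculus, and finally membership in $\Lve{1}$. Set $M \eqdef \sup_n \|g_n\|_{v^\eta,1} < \infty$.

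First I would handle the uniform bounds. By definition of $\|\cdot\|_{v^\eta,1}$, for every $n$ and $x$,
\[
|g_n(x)| \leq M v^\eta(x), \qquad |\partial_i g_n(x)| \leq M v^\eta(x), \quad 1\le i\le \ell.
\]
Passing to the pointwise limit in $n$ immediately yields $|g(x)| \leq M v^\eta(x)$ and $|\zeta_i(x)| \leq M v^\eta(x)$. Combined with the hypothesis that $g$ and $\zeta$ are continuous, this shows $g\in \Lve{0}$ and each $\zeta_i \in \Lve{0}$.

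The heart of the argument is the identification $\nabla g = \zeta$. Fix $x\in\state$, $1\le i\le \ell$, and $t\in\Re$. Because each $g_n$ is $C^1$, the one-dimensional fundamental theorem of calculus gives
\[
g_n(x + t e_i) - g_n(x) = \int_0^t \partial_i g_n(x + s e_i)\, ds.
\]
To pass to the limit under the integral I would apply dominated convergence on the compact interval between $0$ and $t$: the integrand converges pointwise to $\zeta_i(x+se_i)$, and is dominated by the $n$-independent function $s\mapsto M v^\eta(x+se_i)$, which is continuous in $s$ and therefore bounded on a compact interval. Hence
\[
g(x+te_i) - g(x) = \int_0^t \zeta_i(x+se_i)\, ds.
\]
Since $\zeta_i$ is continuous, the right-hand side is differentiable in $t$ with derivative $\zeta_i(x+te_i)$; evaluating at $t=0$ shows that the partial derivative $\partial_i g(x)$ exists and equals $\zeta_i(x)$. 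This holds for every $x$ and every coordinate $i$, so $\nabla g = \zeta$ everywhere.

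Finally, I would collect the ingredients to conclude $g\in\Lve{1}$: $g$ is continuous, each $\partial_i g = \zeta_i$ is continuous, and the uniform bounds just established give $\|g\|_{v^\eta} \le M$ and $\|\partial_i g\|_{v^\eta} \le M$, hence $\|g\|_{v^\eta,1} \le M < \infty$. The only step requiring any care is justifying the dominated convergence, but the uniform bound $\sup_n \|g_n\|_{v^\eta,1}<\infty$ together with continuity of $v$ on the compact segment handles this cleanly; no additional hypothesis on the tail behaviour of $v$ is needed because everything takes place on a bounded segment of $\state$.
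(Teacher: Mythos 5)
Your proof is correct and follows essentially the same route as the paper: both rely on writing $g_n(x+\alpha e^i)-g_n(x)$ as a one-dimensional integral of $\partial_i g_n$, passing to the limit, and then using continuity of $\zeta$ to identify $\nabla g=\zeta$. The paper leaves the dominated-convergence justification and the final $\|g\|_{v^\eta,1}\le M$ bound implicit, while you make them explicit; otherwise the arguments coincide.
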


\begin{proof}
For each $n$, $i$, $\alpha$ and $x$ we  have,
\[
 g_n (x+\alpha e^i) -  g_n(x) = \int_0^\alpha   
\partial_i g_n \, (x+t e^i) \, dt,
\]
where, as before, $\partial_i$ denotes the partial derivative 
with respect to the $i$th coordinate.
Letting $n\to \infty $ gives,
\[
 g(x+\alpha e^i) -  g(x) = \int_0^\alpha   \zeta_i(x+t e^i) \, dt.
\]
Continuity of $\zeta$ implies that $\nabla g = \zeta$.  
\end{proof}

\begin{proof}[Proof of \Theorem{t:MainGeneral}] 
It follows from \Theorem{t:sepDiscrete} that  $P$  has a discrete spectrum 
in $\Lv{1}$, and hence this is also true for $P^{t_1}$.
Furthermore, it is straightforward to see that  
the spectrum of $\tilP^{t_1}$ in $\Lv{1}$ is a subset of 
its spectrum in $\Lvx$: $S_{v,1} (\tilP^{t_1}) \subseteq S_v (\tilP^{t_1})$.
Denote the respective spectral radii by 
$\xi_{v,1}(\tilP^{t_1}) $ and   $\xi_{v}(\tilP^{t_1})  $  (recall the definition \eqref{e:spectralRad}).  We obviously have $\xi_{v,1} (\tilP^{t_1})   \le \xi_{v} (\tilP^{t_1})  $.  Also, under $v$-uniform ergodicity we have $\xi_{v}(\tilP^{t_1})  <1$ \cite[Theorem~2.4]{konmey05a}.

The conclusion  $\xi_{v,1} (\tilP^{t_1}) <1$ immediately 
gives (i) for the $t_1$-skeleton chain:   
		There is $b_1<\infty$ and $\varrho_1<1$ such that,
\[
	\lll \tilP^{t_1 k}   \lll_{v,1} \le b_1\varrho_1^k,\quad k\ge 0.
\]
Under the assumption that $\lll P^t\lll_{v,1}<\infty$ for $t\ge t_1$ 
we obtain,
\[
\lll \tilP^{t_1 (k+1) +i}   \lll_{v,1} 
\le 
\bigl(
\max_{0\le j< t_1}  \lll \tilP^{t_1 + j}   \lll_{v,1} 
\bigr) b_1\varrho_1^k\, ,  \quad \text{for each $k\ge 0$ and $0\le i<t_1$,}
\]
which implies (i).
 
Write $\tilc:=c-\barc$.
The ergodicity result (i) is equivalent to the following bound 
for each $c\in\Lv{1}$:  
\[
\max\Bigl\{
\big|P^t \tilc\, (x)   \big|
,
\big| \partial_1  P^t c\, (x)   
\big|,\dots,\big| \partial_\ell  P^t c\, (x)   \big| \Bigr\}
\leq 
b_0 \rho_0^t  \|c\|_{v,1} v(x)  \,, \quad t \geq t_1.
\]
On defining, for each $n\ge 1$,
\[
h_n=   
\sum_{t=0}^n P^t  \tilc\,,
\]
it follows that $h_n\to h$ in $\Lv{1}$  at the same rate, under the assumption  
$\lll P^t\lll_{v,1}<\infty$ for $t \geq 1$.  
And applying \Lemma{t:nablaPQx1},
\[
\nabla h =
\lim_{n\to\infty} \nabla h_n =  
\lim_{n\to\infty}
\sum_{t=0}^n \nabla  P^t  \tilc 
=
\sum_{t=0}^\infty \nabla  P^t  \tilc,
\]
which completes the proof.  
\end{proof}


The next set of results provide conditions under which the assumptions of \Theorem{t:MainGeneral} hold.  It is convenient to strengthen (A2) to $t_0=1$:
$$
\left. 
\mbox{\parbox{.85\hsize}{\raggedright 
There is a continuously differentiable function $p$ on $ \state\times \state$ such that,
\[
		P(x,A) = \int_A p(x,y)\, dy\,,\qquad x\in\state,\ A\in\clB.
\] 
	}}
	\quad\right\}
\eqno{\hbox{\bf (A2')}}
$$ 
  
Assumption~(A3) is maintained, which together with~(A2')
again implies that $\bfmX$ is $\psi$-irreducible
and aperiodic.
%
%
%

The final assumption invokes (DV3) and a similar condition for $\nabla P$.  
The partial derivatives of the density are denoted:
\[
p_i'(x,y) \eqdef \frac{\partial}{\partial x_i} p(x,y)\,, \qquad 1\le i\le \ell.
\]  
$$
\left. 
\mbox{\parbox{.85\hsize}{\raggedright
	\begin{romannum}
\item The transition kernel $P$	satisfies (DV3) with respect to continuous
		functions $V,W$,   a compact set $C\subset\state$,
                and constants $\delta>0$, $b<\infty$.
\item For each $x\in\state,\  1\le i \le\ell$,  		
\[
		\log \int  |p_i'(x,y) | \exp(V(y) - V(x) )\, dy  \le  -\delta W(x) +b\ind_C(x).
	\]
	\end{romannum}		
	}} 
	\quad\right\}
	\eqno{\hbox{\bf (A4')}}
$$  


The drift condition (DV3) is used here and in \cite{konmey05a,konmey09a} to truncate the transition kernel onto a compact subset of the state space.
Denote for $n\ge 1$,
\[
R_n = \{ x \in\Re^\ell :  |x_i|\le n,\ \ 1\le i\le \ell \}.
\]
The function $\Chi_n$ will denote a smooth approximation of the indicator function on this set.  This is based on a function  $\Chi_n^1 : \Re \to [0,1]$   satisfying  $\Chi_n^1(r) = 1$ for $|r| \le n$ and  $\Chi_n^1(r) = 0$ for $|r| \geq n+1 $.  It is assumed that $\Chi_n^1$ is also $C^1$, with:
\[
	\big | \frac{d}{dr} \Chi_n^1 (r) \big |  \leq 2,
	\quad \text{for all } r \in \Re.
\]
The choice is not unique, but fixed throughout the paper.  	In $\ell$ dimensions, define: 
\[
\Chi_n (x) \eqdef \prod_{i=1}^\ell \Chi_n^1(x_i), \quad x \in \Re^\ell.
\]
This function is also $C^1$, equal to $1$ on $R_n$, $0$ on $R_{n+1}^c$, and
\[
	\big | \frac{\partial}{\partial x_i} \Chi_n (x) \big | \leq 2, \quad 1 \leq i \le \ell, \,\, x \in \Re^\ell.
\]

The proof of \Lemma{t:P2truncation} can be found in the Appendix.
\begin{lemma}
\label{t:P2truncation}
Suppose that assumptions~{\em (A2')} and~{\em (A4')} hold.    Then:
\begin{romannum}
\item 
$P^2$ can be approximated by its truncation in $\Lv{1}$:
\[
\lim\limits_{n \to \infty} \lim\limits_{m \to \infty} \lll P^{2} - I_{\Chi_n}P^{2}I_{\Chi_m} \lll_{v,1} = 0
\]

\item
For each $n$, the kernel $ I_{\Chi_n}P^{2}I_{\Chi_n} $ is separable in $\Lv{1}$.
\end{romannum}
\end{lemma}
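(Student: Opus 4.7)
\emph{Plan.} The plan is to decompose the truncation error as
\[
P^2 - I_{\Chi_n} P^2 I_{\Chi_m} \;=\; (I-I_{\Chi_n})P^2 \;+\; I_{\Chi_n} P^2 (I-I_{\Chi_m}),
\]
and control each piece under the operator norm $\lll\cdot\lll_{v,1}$ using (A4'), then appeal to \Lemma{t:haPv1Separable} for part (ii). The order of the iterated limits is tailored to this decomposition: fixing $n$ and sending $m\to\infty$ kills the inner piece by dominated convergence, after which sending $n\to\infty$ kills the outer piece using the drift-induced exponential decay.

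\medskip

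\emph{Part (i).} For the $\|\cdot\|_v$-component of the operator norm, iterating (DV3) from (A4')(i) yields $P^2 v(x) \le \kappa\, v(x)\, e^{-\delta' W(x)}$ outside an enlarged compact set. Hence for $\|f\|_{v,1}\le 1$,
\[
\frac{\bigl|(1-\Chi_n(x))\,P^2 f(x)\bigr|}{v(x)} \le \kappa\, e^{-\delta' W(x)}\,\mathbf{1}_{R_n^c}(x),
\]
which tends to $0$ uniformly as $n\to\infty$ since $W$ has compact sublevel sets. The inner term $|\Chi_n(x)\,P^2((1-\Chi_m)f)(x)|/v(x)$ is bounded, for $x\in R_{n+1}$, by $v(x)^{-1}\int p^{(2)}(x,y)(1-\Chi_m(y))v(y)\,dy$, and vanishes as $m\to\infty$ by dominated convergence since $P^2 v(x)<\infty$. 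For the partial-derivative component I would apply the product rule
\[
\partial_i\bigl(\Chi_n \cdot P^2(\Chi_m f)\bigr) \;=\; (\partial_i \Chi_n)\cdot P^2(\Chi_m f) + \Chi_n\cdot \partial_i P^2(\Chi_m f),
\]
which splits the derivative error into an outer term, an inner term, and a boundary term supported in $R_{n+1}\setminus R_n$ via $\partial_i\Chi_n$ (with $|\partial_i\Chi_n|\le 2$). Each is handled by the same mechanism once one establishes a weighted derivative bound of the form $|\partial_i P^2 g(x)|/v(x) \le \kappa\, \|g\|_v\, e^{-\delta' W(x)}$ outside a compact set. The main technical obstacle is precisely this bound: write $p^{(2)}(x,y)=\int p(x,z)p(z,y)\,dz$, differentiate under the integral in $x$ (justified by (A2') together with the integrability from (A4')(ii)), and then combine (A4')(ii) on the outer $p(x,z)$-integral with the $v$-boundedness of $Pg$ furnished by (DV3).

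\medskip

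\emph{Part (ii).} The kernel $I_{\Chi_n} P^2 I_{\Chi_n}$ has density, with respect to Lebesgue measure on $\state$, given by
\[
r_n(x,y) \eqdef \Chi_n(x)\, p^{(2)}(x,y)\,\Chi_n(y),
\]
where $p^{(2)}$ denotes the two-step density. Under (A2'), the convolution representation of $p^{(2)}$ and a standard Leibniz-rule argument justified by (A4')(ii) show that $p^{(2)}$ is $C^1$ on $\state\times\state$. Since $\Chi_n$ is $C^1$ and vanishes outside $R_{n+1}$, the density $r_n$ is $C^1$ with compact support in $R_{n+1}\times R_{n+1}$. Normalizing Lebesgue measure on $R_{n+2}$ to a probability measure $\mu$ and absorbing the normalizing constant into $r_n$, the hypotheses of \Lemma{t:haPv1Separable} are satisfied, and separability of $I_{\Chi_n} P^2 I_{\Chi_n}$ in $\Lv{1}$ follows immediately.
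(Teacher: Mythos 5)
Your decomposition
\[
P^2 - I_{\Chi_n} P^2 I_{\Chi_m} = (I-I_{\Chi_n})P^2 + I_{\Chi_n} P^2 (I-I_{\Chi_m})
\]
is a genuinely different route from the paper's, which in fact approximates $P^2$ by the nested object $(I_{\Chi_n}P)^2 I_{\Chi_m}$ rather than by $I_{\Chi_n}P^2 I_{\Chi_m}$ directly (there is an apparent typo between the lemma statement and the paper's own proof, and your version is arguably more literal). The outer term and the boundary term coming from $\partial_i\Chi_n$ are handled correctly by the iterated drift bound, and Part (ii) is fine.

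However, there is a real gap in the inner term. You bound, for $x\in R_{n+1}$,
\[
\frac{|\Chi_n(x)\,P^2((1-\Chi_m)f)(x)|}{v(x)} \le \int p^{(2)}(x,y)(1-\Chi_m(y))v(y)\,dy,
\]
and then claim this ``vanishes as $m\to\infty$ by dominated convergence since $P^2v(x)<\infty$.'' That gives pointwise vanishing at each fixed $x$; the operator norm $\lll I_{\Chi_n}P^2(I-I_{\Chi_m})\lll_{v}$ requires the convergence to be \emph{uniform} over $x\in R_{n+1}$. You cannot invoke Dini's theorem here because continuity of $x\mapsto P^2 v(x)$ is not supplied by (A2') + (A4'); Fatou only gives lower semicontinuity, which is the wrong direction, and $P$ is only strong Feller on \emph{bounded} functions. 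This is exactly the point the paper's nested truncation is engineered to handle: by inserting a second $\Chi_n$ on the intermediate variable, the relevant density is confined to the compact $R_{n+1}\times R_{n+1}$, where continuity yields a uniform bound $M_n$, and one obtains an $x$-\emph{independent} dominating measure $\beta_n$ with $\beta_n(v)<\infty$; then $\beta_n(vI_{R_m^c})\to 0$ by a single, legitimate dominated convergence with no $x$-uniformity issue. Your derivative argument has the identical gap in the inner piece $\Chi_n\,\partial_i P^2((1-\Chi_m)f)$. A minimal fix would be to factor $P^2 = P\cdot P$ and truncate the intermediate variable exactly as the paper does, or else to separately establish that $Pv$ (hence $P^2v$) is continuous under the present hypotheses.
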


The assumptions of \Theorem{t:MainGeneral} hold with $t_1 = 2$: 

\begin{theorem} 
\label{t:AssumpMainGen}
Suppose a Markov chain with transition kernel $P$ satisfies  
assumptions {\em (A2')}, {\em (A3)} and~{\em (A4')}. Then:
\begin{romannum}
	\item $P: \Lv{1} \to \Lv{1}$;
	\item $P^2$ is separable in $\Lv{1}$.
\end{romannum}
\end{theorem}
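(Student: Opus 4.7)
The plan is to establish (i) by directly verifying the $\Lv{1}$-norm bounds using the assumptions in~(A4'), and then to deduce (ii) by combining Lemma~\ref{t:P2truncation} with Lemma~\ref{t:haPv1Separable}.

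For part~(i), let $f\in\Lv{1}$. The bound $\|Pf\|_v \leq e^b\|f\|_v$ follows from~(A4')(i): the drift condition~(DV3) gives $Pv = Pe^V \leq e^{-\delta W + b\ind_C}v \leq e^b v$, and hence $|Pf(x)| \leq \|f\|_v\, Pv(x) \leq e^b\|f\|_v v(x)$. For the partial derivatives, I would first justify the interchange
\[
\partial_i Pf(x) = \int f(y)\, p_i'(x,y)\, dy
\]
by dominated convergence, using the integrability bound in~(A4')(ii) together with continuity of $p_i'$ to produce a dominating integrand uniformly on compact neighborhoods of $x$. Then~(A4')(ii) directly yields
\[
|\partial_i Pf(x)| \leq \|f\|_v \int |p_i'(x,y)|\,v(y)\,dy \leq e^b \|f\|_v v(x),
\]
so $\|\partial_i Pf\|_v \leq e^b\|f\|_v$. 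Continuity of $Pf$ and of each $\partial_i Pf$ then follows from an analogous dominated-convergence argument, since $p$ and $p_i'$ are continuous by~(A2') and the weighted integrals are locally uniformly bounded by~(A4').

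For part~(ii), given $\epsy>0$, Lemma~\ref{t:P2truncation}(i) supplies integers $n,m$ with $\lll P^2 - I_{\Chi_n}P^2 I_{\Chi_m}\lll_{v,1} < \epsy/2$. The truncated kernel $I_{\Chi_n}P^2 I_{\Chi_m}$ has density $\Chi_n(x)\Chi_m(y)\,p^{(2)}(x,y)$ with respect to Lebesgue measure, which is $C^1$ (since $p$, and hence $p^{(2)}$, is $C^1$ by~(A2')) and has compact support. Lemma~\ref{t:haPv1Separable} therefore provides a finite-rank operator $T$ with $\lll I_{\Chi_n}P^2 I_{\Chi_m} - T\lll_{v,1} < \epsy/2$, and the triangle inequality yields $\lll P^2 - T\lll_{v,1} < \epsy$.

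The main obstacle lies in part~(i): rigorously justifying the interchange of $\partial_i$ and $\int$, and establishing continuity of $Pf$ and $\partial_i Pf$, in the presence of the unbounded weight $v$. The key is that~(A4')(ii) furnishes not merely a pointwise weighted $L^1$ bound on the derivative of the density, but, when combined with the continuity assumptions in~(A2') and~(A4'), also the local uniformity in $x$ needed to apply dominated convergence. Part~(ii) is then a straightforward combination of Lemmas~\ref{t:P2truncation} and~\ref{t:haPv1Separable}, modulo the observation that $p^{(2)}(x,y) = \int p(x,z)p(z,y)\,dz$ inherits the $C^1$ regularity of $p$.
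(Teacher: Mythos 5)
Your argument follows the same route as the paper: part~(i) is read directly off the two bounds in~(A4') (the drift condition giving $Pv\le e^b v$, and~(A4')(ii) giving the corresponding bound on $\partial_i P f$), and part~(ii) combines the truncation estimate of Lemma~\ref{t:P2truncation}(i) with Lemma~\ref{t:haPv1Separable} applied to the truncated kernel, which is exactly the content of Lemma~\ref{t:P2truncation}(ii). The extra details you supply — the dominated-convergence justification for $\partial_i\int = \int\partial_i$, the observation that $p^{(2)}$ inherits $C^1$ regularity, and the triangle-inequality assembly — are details the paper's terse proof leaves implicit, and they are handled correctly.
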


\begin{proof}  
The fact that
$P: \Lv{1} \to \Lv{1}$ is a bounded linear operator follows from 
assumption (A4'), and 	
\Lemma{t:P2truncation} implies that $P^2$ is separable in $\Lv{1}$.
\end{proof}

 
\subsection{Proofs}
\label{s:proofs}

We now return to the   Markov chain described by \eqref{e:SP_disc}.  The 
proposition that follows provides much of the ammunition required to obtain 
a version of \Theorem{t:AssumpMainGen} for this model;  
see \Theorem{t:Pt1vSeparable} below.

The next result concerns separability in 
$\Lvx$: \Proposition{t:PfandQvseparable}~(i) follows from 
Lemma~B.5 of \cite{konmey05a}, and the proof of part (ii) is similar.  
Recall the definition~(\ref{e:Qt}) of the semigroup $\{Q^t\}$, 
which maps $\Re^\ell$-valued functions to$\Re^\ell$-valued functions;
let $Q_{i,j}^t$ denote the $(i,j)$-th component of $Q^t$,
for $1\leq i,j\leq \ell$, $t\geq 0$.


\begin{proposition}
\label{t:PfandQvseparable}
Suppose assumptions {\em (A1)--(A4)} hold. Then, for all $t \ge t_1$:
\begin{romannum}
\item $P^{t}$ is separable in $\Lvx$.
\item $Q_{i,j}^{t}$ is separable in $\Lvx$ for all $1 \leq i,j \le \ell$.  
\end{romannum}
\end{proposition}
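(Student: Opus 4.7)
Part~(i) is essentially the content of Lemma~B.5 of \cite{konmey05a}. The argument has two steps: first, the drift condition in (A4) is used to truncate $P^{t}$ to a compact rectangle $R_n$, with the truncation error vanishing in $\lll\cdot\lll_v$ as $n\to\infty$ because $V$ has compact sublevel sets; second, for $t\ge t_0$ the continuous density $p_t$ supplied by (A2) ensures that the restricted kernel on $R_n\times R_n$ is uniformly approximable by tensor-product polynomials via Stone--Weierstrass, or the Bernstein construction of \Lemma{t:bernie}, yielding the required finite-rank approximation.

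For part~(ii), I would replicate the same two-step procedure applied to
\[
Q^t_{i,j}f(x) \;=\; \Expect_x\bigl[S_{j,i}(t)\,f(X(t))\bigr].
\]
The essential new ingredient is the pointwise bound $|S_{j,i}(t)|\le L^t$ with $L:=\sup_{x,n}\|\nabla a(x,n)\|<\infty$, guaranteed by (A1)(ii). This implies that $Q^t_{i,j}$ behaves like $L^t$ times a sub-probability kernel: for every $f\in\Lvx$,
\[
|Q^t_{i,j}f(x)|\;\le\;L^t\|f\|_v\,\Expect_x[v(X(t))],
\]
and the $v$-uniform ergodicity already delivered by (A4) bounds the right-hand side by a constant times $v(x)\|f\|_v$. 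The same inequality, applied to the tail, yields
\[
\lll Q^t_{i,j}-I_{\Chi_n}Q^t_{i,j}I_{\Chi_n}\lll_v
\;\le\;L^t\sup_x\frac{\Expect_x[v(X(t))\,\ind_{R_n^c}(X(t))]}{v(x)}+o(1),
\]
which vanishes as $n\to\infty$ by the compact sublevel sets of $V$ in (A4).

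On the compact truncation, for $t\ge t_0+1$, conditioning on $X(t-t_0)$ and invoking (A2) gives the kernel representation
\[
I_{\Chi_n}Q^t_{i,j}I_{\Chi_n}f(x)
\;=\;\Chi_n(x)\int_{R_n}K^t_{i,j}(x,y)\,\Chi_n(y)f(y)\,dy,
\quad
K^t_{i,j}(x,y):=\Expect_x\bigl[S_{j,i}(t)\,p_{t_0}(X(t-t_0),y)\bigr].
\]
This kernel is bounded by $L^t\sup p_{t_0}$, and pathwise continuity of $(S(t),X(t-t_0))$ in $x$ guaranteed by (A1), together with joint continuity of $p_{t_0}$, promotes $K^t_{i,j}$ to a jointly continuous function in $(x,y)$ via Lebesgue's dominated convergence theorem. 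Stone--Weierstrass (or \Lemma{t:bernie}) then produces a tensor-product approximation $\sum_i \psi_i(x)\phi_i(y)$ of $K^t_{i,j}$ to arbitrary uniform precision on $R_n\times R_n$, which delivers a finite-rank operator within $\epsilon$ of $I_{\Chi_n}Q^t_{i,j}I_{\Chi_n}$ in $\lll\cdot\lll_v$, completing the proof.

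The principal obstacle will be the joint continuity claim for $K^t_{i,j}$: since $S_{j,i}(t)$ is a product of $t$ random matrices depending on the whole path $(X(0),N(1),\dots,N(t))$, a careful invocation of (A1) and dominated convergence (exploiting the uniform envelope $L^t\sup p_{t_0}$) is required to pass the limit inside the expectation. Once this is in hand, the rest of the argument is a straightforward adaptation of \cite[Lemma~B.5]{konmey05a}.
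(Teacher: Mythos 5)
Your treatment of part~(i) follows the paper's intent (it simply cites Lemma~B.5 of \cite{konmey05a}), and the overall two-step plan for part~(ii) — truncate using the drift, then approximate the truncated kernel on a compact rectangle — is the right skeleton. The entrywise bound $|S_{j,i}(t)|\le L^t$ from (A1)(ii) and the resulting estimate $\lll Q^t_{i,j}\lll_v<\infty$ are fine (modulo an equivalence-of-norms constant, and modulo the fact that the truncation error vanishes because of the full (DV3) drift structure, not just compactness of sublevel sets — you are somewhat casual there, but that is an imprecision, not an error).

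The genuine gap is your kernel formula for the truncated operator. You assert
\[
Q^t_{i,j}f(x)\;=\;\int K^t_{i,j}(x,y)\,f(y)\,dy,\qquad
K^t_{i,j}(x,y)\;=\;\Expect_x\bigl[S_{j,i}(t)\,p_{t_0}(X(t-t_0),y)\bigr],
\]
and this is incorrect. Plugging a test function $f$ into your kernel and interchanging $\int$ and $\Expect_x$ gives
\[
\int K^t_{i,j}(x,y)f(y)\,dy
=\Expect_x\bigl[S_{j,i}(t)\cdot P^{t_0}f(X(t-t_0))\bigr]
=\Expect_x\bigl[S_{j,i}(t)\cdot\Expect[f(X(t))\mid\clF_{t-t_0}]\bigr],
\]
whereas the operator you want is $\Expect_x[S_{j,i}(t)\,f(X(t))]$. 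These agree only if $S_{j,i}(t)$ were $\clF_{t-t_0}$-measurable; it is not. Since $S(t)=\dffA(t)\cdots\dffA(t-t_0+1)\cdot S(t-t_0)$ and each $\dffA(s)$ depends on $N(s)$, the factor $S_{j,i}(t)$ depends on $N(t-t_0+1),\dots,N(t)$, so $S_{j,i}(t)$ and $f(X(t))$ are correlated given $\clF_{t-t_0}$, and the discrepancy between the two expressions is a (generally nonzero) conditional covariance. The subsequent continuity argument for $K^t_{i,j}$, which you correctly flag as delicate, is therefore moot: it is being applied to the wrong object.

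What the conditioning actually yields is the cocycle/semigroup factorization
\[
Q^t \;=\; Q^{t-t_0}\circ Q^{t_0},
\]
obtained from $S(t)=B\,S(t-t_0)$ with $B:=\dffA(t)\cdots\dffA(t-t_0+1)$ and the Markov property. Under this decomposition, the step that genuinely requires (A2) is to show that $Q^{t_0}$ (not $Q^t$) has a well-behaved signed density. By disintegrating along $X(t_0)$, that density is $\rho_{i,j}(x,y)\,p_{t_0}(x,y)$ with $\rho_{i,j}(x,y)=\Expect_x[S_{j,i}(t_0)\mid X(t_0)=y]$, bounded by $L^{t_0}$; its joint continuity is the nontrivial point, and it is neither addressed by, nor reducible to, the manipulation you propose. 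In short: part~(ii) as written does not go through, because the claimed kernel representation is false; the correct approach must confront the dependence of the sensitivity cocycle on the last $t_0$ noise increments rather than condition it away.
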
 
 
Proposition~\ref{t:PfandQvseparable}
is extended in this paper to the weighted Sobolev Banach spaces 
$\Lve{0}$ and
$\Lve{1}$.
The proof  of \Theorem{t:Pt1vSeparable} is contained in the Appendix.  
 
\begin{theorem}
\label{t:Pt1vSeparable}
If assumptions {\em (A1)--(A4)} hold, then:
\begin{romannum}
\item  For all $t\ge t_1$ and $\eta \in (0,1]:$
\begin{alphanum}
	\item  $P^t \colon\Lve{k}\to \Lve{k}$ for $k=0,1$.  
	\item  $Q^t_{i,j}\colon\Lve{0}\to \Lve{0}$ for all
	$1 \leq i,j \leq \ell$. 
	\item
	$ \nabla P^t f = Q^t \nabla f $, if $f\in\Lve{1}$.
	\item $P^t$ is  separable in $\Lve{k}$ for $k=0$ and $k=1$.
\end{alphanum}
\item  Results {\em (a)-(c)} hold for all $t \ge 1$, if $\eta\in (0,1)$.
\end{romannum}
\end{theorem}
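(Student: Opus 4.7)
The plan is to build on Proposition~\ref{t:PfandQvseparable} (separability of $P^t$ and $Q^t_{i,j}$ in $\Lvx$) and bootstrap to the weighted Sobolev setting. The structural identity (c), $\nabla P^t f = Q^t \nabla f$, is the keystone: once it is established, the boundedness assertions (a) and (b) reduce to the drift-based moment bounds already available, and (d) becomes a matter of refining the existing finite-rank approximations in the derivative norm. I would therefore prove (c) first, then (a)--(b), then (d), and finally (ii).

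For (c), I would fix $\eta \in (0,1]$, $f\in\Lve{1}$, and $t\ge 1$, and justify termwise differentiation under the expectation. Under (A1)(i)--(ii), the map $x\mapsto X(t)$ is almost surely $C^1$ with Jacobian $\Sens(t)^{\transpose}$, and the chain rule gives $\nabla_x f(X(t)) = \Sens(t)^{\transpose} \nabla f(X(t))$. Since $\|\Sens(t)\| \le M^t$ a.s.\ with $M:=\sup_{x,n}\|\nabla a(x,n)\|<\infty$, the candidate integrand is uniformly dominated on any compact neighborhood by $M^{t}\|f\|_{v^\eta,1} v^\eta(X(t))$. Dominated convergence applies once $\Expect_x[v^\eta(X(t))]<\infty$, and this follows from Proposition~\ref{t:DV3eta}, since $\eta V$ again satisfies (DV3) and hence the chain is $v^\eta$-uniformly ergodic, giving $\Expect_x[v^\eta(X(t))]\le C_\eta v^\eta(x)$. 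The same estimate, together with $\|\Sens(t)\|\le M^t$, immediately yields $\|P^t f\|_{v^\eta}\le C_\eta\|f\|_{v^\eta}$ and $\|\nabla P^t f\|_{v^\eta}\le M^t C_\eta\|f\|_{v^\eta,1}$; continuity of $P^t f$ and of its gradient follow by continuity of $x\mapsto(X(t),\Sens(t))$ combined with dominated convergence. This gives (a) for both $k=0,1$; an entirely parallel bound for $Q^t g(x) = \Expect_x[\Sens(t)^{\transpose} g(X(t))]$ applied componentwise yields (b).

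For (d), I would adapt the truncation-plus-Bernstein scheme of Lemmas~\ref{t:P2truncation} and~\ref{t:haPv1Separable} to the present hypotheses. Writing $P^t = P^{t-t_0} P^{t_0}$ for $t\ge t_1\ge 2 t_0$, assumption (A2) produces a jointly continuous density $p_t(x,y)$ that is $C^1$ in $x$ after an additional composition. Truncating via the cutoffs $\Chi_n$ gives an operator $I_{\Chi_n} P^t I_{\Chi_m}$ with $C^1$ kernel of compact support; arguing as in Lemma~\ref{t:P2truncation}(i) but with (DV3) scaled by $\eta$ via Proposition~\ref{t:DV3eta}, the truncation approximates $P^t$ in $\lll\cdot\lll_{v^\eta,1}$. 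Applying Lemma~\ref{t:bernie} to the truncated density then produces a polynomial-in-$x$ kernel that approximates it in $C^1$ norm on the compact support, and this is manifestly finite rank. The $k=0$ case is easier and is handled by dropping the derivative bookkeeping throughout.

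Finally, for (ii), the strictness $\eta<1$ is exploited through Proposition~\ref{t:DV3eta}: for each $\eta<1$ one obtains a one-step drift $\Expect_x[v^\eta(X(1))]\le C_\eta v^\eta(x)$, and combined with the uniform bound $\|\Sens(1)\|\le M$ from (A1)(ii), the arguments for (a)--(c) above apply already at $t=1$; iterating in $t$ extends them to all $t\ge 1$. The main obstacle is the derivative-norm separability in (d): unlike the $\Lvx$ case in \cite{konmey05a}, the finite-rank approximants must control $\nabla$ of the approximation error, which forces us to work with the two-step density and to use the $C^1$-uniform nature of Lemma~\ref{t:bernie} rather than merely uniform approximation. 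This is where the smoothness in (A2) and the boundedness of $\nabla a$ in (A1) interact most delicately with the drift condition (A4).
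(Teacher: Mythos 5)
Your plan for part (d) matches the paper: truncate via $\Chi_n$, use Lemma~\ref{t:P2truncation}/\ref{t:PfandQtruncation}-style bounds to show the truncation approximates $P^{t_1}$ in $\lll\cdot\lll_{v^\eta,1}$, then apply Lemma~\ref{t:haPv1Separable} (Bernstein polynomials). For parts (a)--(c), however, you take a genuinely different route than the paper, and that route has a gap.

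You propose to prove (c) by direct differentiation under the expectation, dominating the integrand $\Sens(t)^{\transpose}\nabla f(X(t))$ by $M^t\|f\|_{v^\eta,1}\,v^\eta(X(t))$, and invoking $\Expect_x[v^\eta(X(t))]<\infty$. But this dominating function depends on the initial condition: to justify differentiation at $x$ (or continuity of $P^t f$ near $x$) by dominated convergence, you need a single integrable envelope valid uniformly for $x'$ in a neighborhood of $x$. What you actually control is $v^\eta(X^{x'}(t))$, and nothing in (A1)--(A4) bounds $\sup_{x'\in B(x,r)} v^\eta(X^{x'}(t))$ by an integrable quantity. Even though (A1)(ii) gives the a.s.\ Lipschitz bound $\|X^{x'}(t)-X^{x}(t)\|\le M^t\|x'-x\|$, the ratio $v(y')/v(y)$ for $\|y'-y\|$ bounded can be unbounded in $y$ (e.g.\ $V$ growing super-linearly), so the envelope does not close. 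The same issue recurs in your claim that continuity of $P^t f$ and $\nabla P^t f$ follows ``by continuity of $x\mapsto (X(t),\Sens(t))$ combined with dominated convergence.''

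The paper sidesteps precisely this by truncating $f$ rather than trying to dominate directly. In Lemma~\ref{t:nablaPQlemma}, one sets $f_n=\Chi_n f$, which is compactly supported, so $f_n$ and $\nabla f_n$ are bounded and the integrand $\Sens(t)^{\transpose}\nabla f_n(X(t))$ is dominated by the constant $M^t\|\nabla f_n\|_\infty$. This makes $\nabla P^t f_n = Q^t\nabla f_n$ unproblematic, and the passage $n\to\infty$ is then carried out \emph{not} by exchanging a limit with $\nabla$ but by Lemma~\ref{t:nablaPQx1}, which only requires pointwise convergence of $g_n$ and $\nabla g_n$, uniform boundedness in $\|\cdot\|_{v^\eta,1}$, and continuity of the limits --- a fundamental-theorem-of-calculus argument that avoids any uniform domination. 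Feeding Lemma~\ref{t:nablaPQlemma} requires first knowing $P^t, Q^t_{i,j}\colon\Lve{0}\to\Lve{0}$, and the paper obtains \emph{that} (Proposition~\ref{t:PQv0lemma}) from the $\Lvx$-separability of Proposition~\ref{t:PfandQvseparable} via the truncation Lemmas~\ref{t:PfandQtruncation}, \ref{t:PfandQtruncation-eta} and the uniform-on-compacts convergence criterion in Lemma~\ref{t:Feller}(iii). So the $\Lvx$-separability results you deferred to the end are in fact the engine driving (a)--(c) as well, not just (d). Your proof should either adopt this truncate-then-limit structure, or add a hypothesis (e.g.\ a bound on $\nabla V$) that makes your direct domination legitimate.
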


\archive{Note to IK and AD in Sept 2017.  I no longer know if this old note is relevant:
\\
This seems too vague to me.  Let's think it over
\\
``Note that each of the results that hold for the function $v$ also hold for $v^\eta$ by applying \Proposition{t:DV3eta}''
}




\archive{Oct 1:  proof too soft.  Let's discuss}
\archive{Oct 5: Proof changed; Need to generalize for $\eta<1$? Because we are calling this theorem in the proof of the next..}
\archive{Nov 15: 	\Proposition{t:DV3eta}
gives us this generalization}


\begin{proof}[Proof of \Theorem{t:ergoMain}]
\Theorem{t:Pt1vSeparable}~(i) states that under assumptions (A1)--(A4), 
$P^{t}$ is separable in $\Lve{1}$, for all $t\ge t_1$ and $\eta \in (0,1]$. It also states that $P^{t}:  \Lve{1} \to \Lve{1}$.
\Theorem{t:sepDiscrete} then implies that $P^{t}$ has a discrete spectrum in $\Lve{1}$.   \Theorem{t:MainGeneral} implies the desired conclusion: 
\[
	\lll \tilP^{t}   \lll_{v^\eta,1} \le b_0\varrho_0^t,\quad t\ge t_1.
\]
\end{proof}
%

\begin{proof}[Proof of \Theorem{t:fish}]
As before, let $\tilc=c-\barc$.
It is obvious that $h$ in \eqref{e:fish-sum} is a solution to Poisson equation, and that its mean is zero.
To establish uniqueness, suppose that $h\in\Lvx$ is any solution with mean zero. We iterate Poisson's equation to obtain,
\[
P^n h = h -\sum_{t=0}^{n-1} P^t \tilc
\]
Since $h\in\Lvx$ with mean zero, we have $\| P^n h\|_v \to 0$  as $n\to\infty$, which establishes that $h$ is equal to the infinite sum in \eqref{e:fish-sum}.  This establishes the first assertions of the theorem.
 
To prove (i), we fix $\eta\in (0,1)$ and $c\in\Lve{0}$.   We have as before that 
 $\| P^t \tilc\|_{v^\eta} \to 0$ as $t\to\infty$ and consequently $h\in\Lvex$. 
 It remains to show that $h$ is continuous.  

Recall from  \Theorem{t:Pt1vSeparable} that $P^t\colon \Lve{0}\to \Lve{0}$ for each $t$.   Since $v$ is assumed to have compact sublevel sets, it follows that $\{P^t \tilc : t\ge 0\}$ are continuous functions that  converge to zero uniformly geometrically fast on compact subsets of $\state$.  This establishes continuity of $h$.

The proof of (ii) requires conclusion~(ii)(c) of \Theorem{t:Pt1vSeparable}:  For $c\in\Lve{1}$, $\eta \in (0,1)$ and $t\ge 1$,
\[
  \bigl( Q^t \nabla c  \bigr)_i    =   \partial_i   P^t c .
\]
\Theorem{t:ergoMain} implies a geometric bound on the right-hand side: For $t \geq t_1$,
\[
 \Big| \partial_i   P^t c\, (x)   \Big|
=
 \Big| \partial_i  \Expect_x[ c(X(t))] \Big|    \leq b_0 \rho_0^t  \|c\|_{v^\eta,1} v^\eta(x)  \,, \quad 1\le i\le \ell\, .
\]

 
Define for each $n\ge 1$,
\[
h_n=   
\sum_{t=0}^n P^t  \tilc\,.
\]
Since $P^t: \Lve{1} \to \Lve{1}$ for each $t \geq 1$, for any finite $n$, we have $h_n \in \Lve{1}$. 
Moreover, since $P^t \tilc \to 0$ in $\Lve{1}$  as $t\to\infty$ at a geometric rate, 
it follows that as $n\to\infty$, $h_n\to h$ in $\Lve{1}$  at the same rate.   

In particular,    
\[
\nabla h =
\lim_{n\to\infty} \nabla h_n =  
\lim_{n\to\infty}
\sum_{t=0}^n \nabla  P^t  \tilc 
=
\sum_{t=0}^\infty Q^t \nabla c
=
\Omega \nabla c,
\]
as claimed.
\end{proof} 

\newpage

\appendix


\section{Appendix}


\subsection{Operator bounds}

We begin with sufficient conditions for the identity \eqref{e:nablaPQ}.  
We first require the following corollary to \Lemma{t:nablaPQx1}:

\begin{lemma}
\label{t:nablaPQlemma}
Suppose that for  some $t \ge 1$ and $\eta\in (0,1]$,
\begin{equation}
P^t \colon\Lve{0}\to \Lve{0}, \qquad Q_{i,j}^t\colon\Lve{0}\to \Lve{0}, \quad  1 \leq i,j \le \ell .
\label{e:PQv0}
\end{equation}  
Then,
$P^t \colon\Lve{1}\to \Lve{1}$  and 
\eqref{e:nablaPQ} holds on $\Lve{1}$:
\[
 \nabla P^t f = Q^t \nabla f  \,, \quad f \in \Lve{1}.
\]
\end{lemma}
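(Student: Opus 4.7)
The plan is to reduce to compactly supported $C^1$ approximants, where the interchange of gradient and expectation is direct, and then to pass to the limit by means of \Lemma{t:nablaPQx1}.

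For $f \in \Lve{1}$, I would set $f_n(x) := f(x) \Chi_n(x)$, with $\Chi_n$ the smooth cutoff introduced in \Section{s:wo}. Each $f_n$ is then $C^1$ with compact support, and the product rule combined with the pointwise bound $|\partial_i \Chi_n| \le 2$ yields $\|f_n\|_{v^\eta,1} \le 3 \|f\|_{v^\eta,1}$. Moreover, $f_n(x) \to f(x)$ and $\nabla f_n(x) \to \nabla f(x)$ pointwise on $\state$ as $n \to \infty$.

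Next I would verify the identity $\nabla P^t f_n = Q^t \nabla f_n$ for each $n$. By (A1)(ii) there is $M < \infty$ with $\|\nabla a\| \le M$, so iteration of \eqref{e:SP_sens_disc} gives $\|\Sens(s)\| \le M^s$ almost surely for every $s \le t$. Since $\nabla f_n$ is bounded and continuous on $\state$, the random variable $\Sens^\transpose(t) \nabla f_n(X(t))$ is uniformly bounded by $M^t \|\nabla f_n\|_\infty$. Applying dominated convergence to the difference quotients of $x \mapsto f_n(X(t;x))$, together with the chain-rule identity $\nabla_x [f_n(X(t;x))] = \Sens^\transpose(t) \nabla f_n(X(t;x))$, yields $\nabla P^t f_n(x) = Q^t \nabla f_n(x)$ for every $x$.

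The limit step uses the two $\Lve{0}$-boundedness hypotheses. Since $|f_n| \le \|f\|_{v^\eta,1} v^\eta$, with $v^\eta \in \Lve{0}$ and $P^t v^\eta(x) < \infty$ for each $x$, dominated convergence gives $P^t f_n(x) \to P^t f(x)$. An analogous componentwise argument, using $|\partial_j f_n| \le 3 \|f\|_{v^\eta,1} v^\eta$ and the $\Lve{0}$-boundedness of each $Q_{i,j}^t$, shows $(Q^t \nabla f_n)_i(x) \to (Q^t \nabla f)_i(x)$ pointwise. Both limits $P^t f$ and $Q^t \nabla f$ are continuous, being images of continuous functions under operators that preserve $\Lve{0}$; and the operator norm bounds on $P^t$ and the $Q_{i,j}^t$, together with the uniform control on $\|f_n\|_{v^\eta,1}$, give $\sup_n \|P^t f_n\|_{v^\eta,1} < \infty$. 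Applying \Lemma{t:nablaPQx1} to the sequence $g_n := P^t f_n$ then delivers both $P^t f \in \Lve{1}$ and $\nabla P^t f = Q^t \nabla f$.

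The main technical delicacy will be making rigorous use of the hypothesis $Q_{i,j}^t:\Lve{0}\to\Lve{0}$ in the dominated-convergence step: what is really needed is the absolute integrability $\Expect_x[|\Sens_{i,j}(t)| v^\eta(X(t))] < \infty$, which is stronger than mere existence of $\Expect_x[\Sens_{i,j}(t) v^\eta(X(t))]$. In the context of the paper this is implicit in the boundedness of $Q_{i,j}^t$ as a linear operator, but making it explicit -- for instance by combining the almost-sure bound $\|\Sens(t)\| \le M^t$ from (A1) with a moment bound on $v^\eta(X(t))$ under $\Prob_x$ derived from (A4) -- is the one routine but crucial check.
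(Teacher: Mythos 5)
Your proof is correct and takes essentially the same route as the paper's: cut off with $f_n = \Chi_n f$, establish $\nabla P^t f_n = Q^t \nabla f_n$ for the compactly supported approximants, then pass to the limit via dominated convergence and \Lemma{t:nablaPQx1}. The only difference is that you spell out the interchange of gradient and expectation on the compactly supported level (via the a.s.\ bound $\|\Sens(t)\|\le M^t$ from (A1) and difference quotients) and explicitly flag the absolute-integrability requirement behind the dominated-convergence step, both of which the paper leaves implicit; the substance is identical.
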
 

\begin{proof}
For any function $f \in \Lve{1}$ and $n\ge 1$, let $f_n = \Chi_n f$. 
The function $f_n$ and its partial derivatives are continuous, and 
$\sup_n \| f_n \|_{v^\eta,1}<\infty$.  We have $ \lim_{n\to\infty}  \nabla f_n =  \nabla f$, where the limit is continuous by assumption.

We apply  \Lemma{t:nablaPQx1} with $g_n=  P^t f_n$.  To verify the conditions 
of the lemma, first observe that  $\{g_n\}$ converges to $g=P^tf$ by 
dominated convergence.  The limiting function $g$ is continuous 
by~\eqref{e:PQv0}.
From \eqref{e:Qt} it follows that $\nabla g_n = Q^t \nabla f_n$
and, since each $Q^t_{i,j}$ is a bounded linear operator, it follows 
that $\sup_n \| g_n \|_{v^\eta,1}<\infty$.  The final requirement of 
the lemma is convergence of the gradients.  This follows from a second 
application of dominated convergence:
\[
\zeta(x)\eqdef
	 \lim_{n\to\infty}  \nabla g_n (x) = \lim_{n\to\infty} \int Q^t (x,dy) \nabla f_n(y)  = Q^t  \nabla f\, (x).
\]
\Lemma{t:nablaPQx1} then implies the desired conclusion, that 
$\nabla P^t f  = Q^t \nabla f$.   This identity combined with \eqref{e:PQv0} 
then implies that  $P^t \colon\Lve{1}\to \Lve{1}$.
	\end{proof}

A second application of \Lemma{t:nablaPQx1} is in 
the proof of \Proposition{t:Banach}.

\begin{proof}[Proof of \Proposition{t:Banach}]
The proof only requires that each of these function spaces is complete.  
This is an elementary exercise in the case of  $\Lvx$,  and thence $\Lv{0}$.
Completeness of $\Lv{1}$ is established here.

Suppose that $\{f_n\}\subset\Lv{1}$ is a Cauchy sequence.   
Since $\Lv{0}$ is a Banach space,  it immediately follows that there are functions $\{f,\zeta_1,\dots,\zeta_\ell\}\subset \Lv{0}$ such that
\[
\lim_{n\to\infty}  \| f_n - f\|_v = \lim_{n\to\infty}  \| \partial_i  f_n - \zeta_i\|_v = 0,  \quad1\le i\le \ell. 
\]
Consequently, the assumptions of \Lemma{t:nablaPQx1} hold, with $\zeta= \nabla f$ continuous.  Moreover, these limits imply that  convergence of $\{f_n\}$ to $f$ holds in $\Lv{1}$, as required for completeness:  $\lim_{n\to\infty}  \| f_n - f\|_{v,1}=0$.
\end{proof}

\subsection{Truncations}

Several truncation bounds are obtained here. 
The following notation will be useful: For any operator $Z$ on 
$\Lvx,\Lv{0}$ or $\Lv{1}$,
we write $Z_n \tov Z$, if
\[
\lim\limits_{n \to \infty} \lll Z_n - Z  \lll_v = 0.
\]
The elementary observation stated below without proof,
is used to avoid establishing one-sided truncation bounds;
recall the definition of the functions $\{\Chi_n\}$ in Section~\ref{s:wo}.
\begin{lemma}
\label{t:TruncLR}
Suppose that $Z$ is a bounded linear operator on a Banach space 
of functions on $\state$, with induced operator norm $\lll\varble \lll$.   
If $Z$ it can can be approximated by its truncation on both sides,
\archive{ it's = it is !
Hint: The word involved is small and it's contained in this sentence. Yet the two rules are actually quite easy to remember. Rule 1: When you mean it is or it has, use an apostrophe. Rule 2: When you are using its as a possessive, don't use the apostrophe.
Its vs. It's - Grammar \&\ Punctuation | The Blue Book of Grammar and ...}
\[
	\lim\limits_{n \to \infty} \lll Z - I_{\Chi_n}Z I_{\Chi_n} \lll = 0,
\]
then $Z$ can be approximated by its truncation on either side:	
\[
		\lim\limits_{n \to \infty} \lll Z - Z I_{\Chi_n} \lll = 
		\lim\limits_{n \to \infty} \lll Z - I_{\Chi_n} Z \lll = 0.
\] 		
\end{lemma}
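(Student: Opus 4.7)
The plan rests on an algebraic orthogonality relation between the cut-offs: because $\Chi_m$ is supported in $R_{m+1}$ while $\Chi_n \equiv 1$ on $R_n$, one has $\Chi_m(x)(1-\Chi_n(x)) \equiv 0$ whenever $n \geq m+1$. As operators on any Banach space of functions on $\state$, this translates into
\[
(I-I_{\Chi_n})I_{\Chi_m} \;=\; I_{\Chi_m}(I-I_{\Chi_n}) \;=\; 0, \qquad n \geq m+1.
\]

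The next step is to verify the uniform bound $K := \sup_n \lll I - I_{\Chi_n} \lll < \infty$ in the ambient Banach space. In the three cases to which this lemma is applied this is immediate from $|1-\Chi_n|\leq 1$, together with $|\partial_i\Chi_n|\leq 2$ in the Sobolev case; one obtains $K\leq 1$ in $\Lvx$ and $\Lv{0}$, and $K\leq 3$ in $\Lv{1}$.

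With these two ingredients the main estimate is a one-liner. For $n\geq m+1$, write
\[
Z - I_{\Chi_n}Z \;=\; (I-I_{\Chi_n})\bigl[Z - I_{\Chi_m}Z I_{\Chi_m}\bigr] \;+\; (I-I_{\Chi_n})I_{\Chi_m}Z I_{\Chi_m},
\]
where the second term vanishes by the orthogonality relation. Hence
\[
\lll Z - I_{\Chi_n}Z \lll \;\leq\; K \cdot \lll Z - I_{\Chi_m}Z I_{\Chi_m} \lll, \qquad n \geq m+1,
\]
and the completely symmetric decomposition of $Z(I-I_{\Chi_n}) = Z - Z I_{\Chi_n}$ delivers the same bound for $\lll Z - Z I_{\Chi_n}\lll$. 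Taking $\limsup_{n\to\infty}$ (with $m$ fixed) and then letting $m\to\infty$, the hypothesis $\lll Z - I_{\Chi_m}ZI_{\Chi_m}\lll \to 0$ gives both desired limits.

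There is no real obstacle here: the argument hinges entirely on the nested cut-off structure, which makes $(I-I_{\Chi_n})$ and $I_{\Chi_m}$ annihilate each other for $n > m$. The only point that must be recorded for the statement to be clean is the uniform bound on $\lll I - I_{\Chi_n}\lll$, and this is automatic in the function spaces where the lemma is used.
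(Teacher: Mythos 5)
Your proof is correct. The paper states this lemma explicitly ``without proof,'' so there is no authoritative argument to compare against, but your decomposition is the natural one: the orthogonality $(I-I_{\Chi_n})I_{\Chi_m}=I_{\Chi_m}(I-I_{\Chi_n})=0$ for $n\geq m+1$ combined with
\[
Z-I_{\Chi_n}Z \;=\; (I-I_{\Chi_n})\bigl[Z - I_{\Chi_m}ZI_{\Chi_m}\bigr],\qquad n\ge m+1,
\]
(and its mirror image for $Z-ZI_{\Chi_n}$) immediately gives both one-sided limits once $\sup_n\lll I-I_{\Chi_n}\lll<\infty$. Your verification of that uniform bound — at most $1$ in $\Lvx$ and $\Lv{0}$, at most $3$ in $\Lv{1}$ using $|1-\Chi_n|\le 1$, $|\partial_i\Chi_n|\le 2$, and the product rule — is also correct. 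You are right to flag that this uniform bound is not literally among the stated hypotheses: as worded, the lemma cannot hold on an arbitrary Banach space of functions, so either the bound on the cut-off multiplications should be added as an explicit assumption, or the statement should be read as restricted to the weighted spaces $\Lvx$, $\Lv{0}$, $\Lv{1}$ in which the lemma is actually invoked. That reading matches every use of \Lemma{t:TruncLR} in the paper.
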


\begin{proof}[Proof of \Lemma{t:P2truncation}]
It is only necessary to prove (i), since the implication (i) $\Rightarrow$ (ii) follows from \Lemma{t:haPv1Separable}.

Assumption (A2') along with part (i) of (A4') implies that the transition 
kernel can be approximated by its left truncation: In $\Lvx$ we have
$I_{\Chi_n} P\tov  P$ (see \cite[Lemma B.4]{konmey05a}),  and hence:
\begin{equation}
(I_{\Chi_n} P)^2 \tov P^2.
	\label{e:leftTruncP}
\end{equation} 
Furthermore, assumptions~(A2') and~(A4')  imply a bound of the form,
\[
\big|I_{\Chi_n} P I_{\Chi_n} (x,A) \big| \leq \beta_n^0(A),
\quad 1 \leq i \leq \ell, \ x\in\state, \ A\in\clB,
\]
where $\beta_n^0$ is a positive measure with compact support, and hence  $\beta_n^0(v) < \infty$.  Therefore, for all $x \in \state$ and $A \subset \clB$,
\[
\begin{aligned}
\big|  (I_{\Chi_n} P)^2 (x,A) \big| 
& = \big| \int\limits_{R_{n+1}}  \Bigl\{  \Chi_n (x) P(x,dy)  \Chi_n(y) P(y,A)  \Bigr\} \big| \\
& \leq \int\limits_{R_{n+1}} \beta_n^0(dy) P(y,A)  \eqdef \beta_n(A)\,,
\end{aligned}
\]
with $\beta_n(v)<\infty$ since both $\lll P\lll_v$ 
and $\beta_n^0(v)$ are finite.   
Therefore, for any $f\in\Lvx$,
\[
\begin{aligned}
\| (I_{\Chi_n} P)^2 I_{\Chi_m} f  -  P^2 f  \|_v
&  
\le
\| (I_{\Chi_n} P)^2 ( 1-\Chi_m)f    \|_v
+
\| (I_{\Chi_n} P)^2   f  -  P^2 f  \|_v  
\\[.2em]
&\le    \beta_n(v I_{R_m^c}) \|f\|_v
						+  
\| (I_{\Chi_n} P)^2   f  -  P^2 f  \|_v ,
\end{aligned}
\]
and applying \eqref{e:leftTruncP}, 
\[
\lim\limits_{m,n \to \infty} 
\lll  (I_{\Chi_n}P)^{2}I_{\Chi_m} -P^{2} \lll_{v} = 0.
\]

To complete the proof of (i), it remains to be shown that,
for each $1 \leq i \leq \ell$, 
\begin{equation}
\lim\limits_{m,n \to \infty} 
	 \lll \partial_i ( I_{\Chi_n} P)^2 I_{\Chi_m}
	 		-\partial_i P^2   \lll_{v} = 0\,,
\label{e:P2der-trunc}
\end{equation}
where, again, $\partial_i$ is shorthand for ${\partial}/{\partial x_i}$.
The proof follows exactly the same steps as before:
Assumption (A2') and part (ii) of (A4') imply that $P_i'$ can be truncated on the left: $I_{\Chi_n} P_i'\tov P_i' $ for each $i$;  that is,
\[
\lim\limits_{n \to \infty} \lll \partial_i P - I_{\Chi_n} \partial_iP \lll_{v} = 0\, ,\quad 1\le i\le \ell\, .
\]
From this, and the prior conclusion $I_{\Chi_n} P\tov  P$, we obtain
\begin{equation} 
\lim\limits_{n \to \infty}
	   \|\partial_i (I_{\Chi_n} P)^2  f  
  	-\partial_iP^2  f   \|_v  =0\, .
	\label{e:leftTruncPidash}
\end{equation}	
	
Furthermore, the two assumptions imply a bound of the form
\[
	\Big|\partial_i I_{\Chi_n} P I_{\Chi_n} (x,A) \Big| \leq \gamma_n^0(A),
					 \quad 1 \leq i \leq \ell, \ x\in\state, \ A\in\clB,
\]
 where $\gamma_n^0$ is a positive measure with compact support, and hence  $\gamma_n^0(v) < \infty$.  
 	Therefore, for all $x \in \state$, $A \subset \clB$ and $1 \leq i \leq \ell$,
\[
\begin{aligned}
	\big|  \partial_i (I_{\Chi_n} P)^2 (x,A) \big| 
& = \bigg| \int\limits_{R_{n+1}} \frac{\partial}{\partial x_i} \Bigl\{  \Chi_n (x) P(x,dy)  \Chi_n(y) P(y,A)  \Bigr\} \bigg| \\
	& \leq \int\limits_{R_{n+1}} \gamma_n^0(dy) P(y,A)  \eqdef \gamma_n(A).
\end{aligned}
\]
It follows that for all $ f \in \Lvx$, and $ 1 \leq i \leq \ell$, 
\[ 
\begin{aligned}
  \|\partial_i (I_{\Chi_n} P)^2 I_{\Chi_m} f  
  	- \partial_i P^2 f  \|_v 
	&	\le 
	   \|  \partial_i (I_{\Chi_n} P)^2  (1 - \Chi_m) f \|_v 
	   +
	   	   \|\partial_i (I_{\Chi_n} P)^2  f    	-\partial_iP^2  f   \|_v 
	\\[.2em]
	&\le
	 \gamma_n(v I_{R_m^c}) \|f\|_v 
	 +
	   \|\partial_i (I_{\Chi_n} P)^2  f    	-\partial_iP^2  f   \|_v 
\end{aligned}
\]
Combining this with \eqref{e:leftTruncPidash} implies that \eqref{e:P2der-trunc} holds,   
and this completes the proof of part (i), as required.
\end{proof}


The next results concern the nonlinear state space model.   
\Lemma{t:Feller} follows directly from the assumptions. 
Recall the discussion of the (strong) Feller property in
Section~\ref{s:irred}. As before,  
$Q_{i,j}^t$ denotes the $(i,j)$-th component of $Q^t$,
for $1\leq i,j\leq \ell$, $t\geq 0$.


\begin{lemma}
\label{t:Feller}
Suppose that assumptions {\em (A1)--(A4)}
hold,   and  let $Z_t$ denote any one of the kernels $P^t$ or $Q^t_{i,j}$ with   $t\ge 1$ and $1\le i,j\le \ell$.   
\begin{romannum}
\item   The   {\em Feller property} holds for $Z_t$, for $t \geq 1$ and the {\em strong Feller property} holds for  $Z_t$, when $t \geq t_0$. Moreover, the following stronger properties hold:
\[
\begin{aligned}
Z_t : \Lve{0} \to \Lve{0} , \quad t\ge 1,
\\	
Z_t : \Lvex \to \Lve{0} ,\quad t\ge t_0 .
\end{aligned}
\]
\item
 For each $n\ge1$ and $\eta\in (0,1]$,
\[
\begin{aligned}
Z_t I_{\Chi_n}&: \Lve{0} \to \Lve{0} , \quad t\ge 1,
\\
Z_t I_{\Chi_n}&: \Lvex \to \Lve{0} ,\quad t\ge t_0.
\end{aligned}
\]
\item
Suppose  that for some $\eta\in (0,1]$, $t\ge 1$ and every $g\in\Lvex$,   
\[
\lim_{n\to\infty} Z_t I_{\Chi_n} g = Z_t g, 
\]
where the convergence is uniform on compact subsets of $\state$.  Then,    
\[
\begin{aligned}
Z_t  &: \Lve{0} \to \Lve{0}\,, 
\\
  \text{and}\quad Z_t & : \Lvex \to \Lve{0} ,\quad \text{provided} \,\, t\ge t_0 \,.
\end{aligned}
\]
\end{romannum}
\end{lemma}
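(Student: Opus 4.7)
The three parts are proved in sequence, with Part~(i) doing the conceptual work and Parts~(ii)--(iii) being largely formal.

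For Part~(i), the Feller property is a direct consequence of (A1): by (A1)(i) the noise is independent of $X(0)$, and by (A1)(ii) the map $a(\,\cdot\,,n)$ is continuously differentiable with bounded derivative. A straightforward induction on $t$ then shows that, for each fixed noise realization, $X(t,x)$ and $\Sens(t,x)$ are continuous functions of the initial condition $x$, with $\|\Sens(t,x)\|\le L^{t}$ where $L=\sup_{x,n}\|\nabla a(x,n)\|$. Continuity of $P^{t}f$ (resp.\ $Q^{t}g$) for $f,g\in \Lve{0}$ then follows by dominated convergence, using $v^{\eta}$ as dominating envelope. The bound $P^{t}\colon \Lvex\to\Lvex$ is obtained from (A4) and \Proposition{t:DV3eta} applied to $\eta V$, and the pointwise estimate $|Q^{t}g|\le L^{t}P^{t}|g|$ extends this to $Q^{t}$. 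The strong Feller property of $P^{t}$ for $t\ge t_{0}$ follows from the decomposition $P^{t}=P^{t-t_{0}}P^{t_{0}}$ together with the continuous density $p_{t_{0}}$ from (A2): $P^{t_{0}}f(x)=\int p_{t_{0}}(x,y)f(y)\,dy$ is continuous in $x$ for bounded measurable $f$ by dominated convergence applied to the continuous density, and the Feller property of $P^{t-t_{0}}$ then gives continuity of $P^{t}f$. The analogous statement for $Q^{t}$ uses the semigroup identity $Q^{t}=Q^{t-t_{0}}Q^{t_{0}}$, so it suffices to verify that $Q^{t_{0}}g$ is continuous for bounded measurable $g$; this is obtained by conditioning on the terminal state $X(t_{0})$ and exploiting the smoothness of $p_{t_{0}}$ in conjunction with the pathwise continuity of the sensitivity matrix in $x$.

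Part~(ii) is immediate from Part~(i): for any $g\in\Lvex$, the truncation $I_{\Chi_{n}}g$ is bounded with compact support, and it retains the continuity of $g$ (when $g\in\Lve{0}$) via the continuous cutoff $\Chi_{n}$; applying Part~(i) to this function yields both mapping statements. For Part~(iii), the hypothesis that $Z_{t}I_{\Chi_{n}}g\to Z_{t}g$ uniformly on compact subsets of $\state$, together with the continuity of each $Z_{t}I_{\Chi_{n}}g$ established in Part~(ii), shows that $Z_{t}g$ is continuous on every compact set and hence continuous. Boundedness in $v^{\eta}$ is preserved by Fatou's lemma combined with the dominating bound $|Z_{t}I_{\Chi_{n}}g(x)|\le c\,Z_{t}v^{\eta}(x)\le c'\,v^{\eta}(x)$, available from Part~(i).

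The main obstacle I anticipate is the strong Feller property of $Q^{t_{0}}$. Assumption~(A2) provides a continuous density only for the marginal distribution of $X(t_{0})$; it does not immediately yield smoothness of the joint law of $(\Sens(t_{0}),X(t_{0}))$ given $X(0)=x$. The remedy will be to combine the deterministic (pathwise) continuity of the sensitivity in $x$ provided by (A1)(ii) with the density smoothing for $X(t_{0})$ from (A2), so as to produce continuity of the expectation $\Expect_{x}[\Sens(t_{0})^{\transpose}g(X(t_{0}))]$ in $x$ for bounded measurable $g$. All other steps are routine dominated-convergence and uniform-limit arguments.
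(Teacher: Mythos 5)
Your handling of the ordinary Feller property and of the strong Feller property for $P^t$ is right, and Parts (ii)--(iii) are sound truncation/uniform-limit arguments. (The paper itself gives no proof, asserting only that the lemma "follows directly from the assumptions.") The real gap is the one you flag yourself --- the strong Feller property of $Q^{t}_{i,j}$ for $t\ge t_0$ --- and the "remedy" you sketch does not close it. Splitting
\[
Q^{t_0}g(x_n)-Q^{t_0}g(x)
=\Expect\!\bigl[(\Sens^{\transpose}(t_0,x_n)-\Sens^{\transpose}(t_0,x))\,g(X(t_0,x_n))\bigr]
+\Expect\!\bigl[\Sens^{\transpose}(t_0,x)\bigl(g(X(t_0,x_n))-g(X(t_0,x))\bigr)\bigr],
\]
the first term vanishes by bounded convergence, but the second does not: for merely measurable $g$, the quantity $g(X(t_0,x_n))-g(X(t_0,x))$ need not tend to zero along any fixed noise path. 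Assumption (A2) smooths only the marginal law of $X(t_0)$; it says nothing about the joint law of $(\Sens(t_0),X(t_0))$ or about the conditional expectation $\Expect_x[\Sens^{\transpose}(t_0)\mid X(t_0)=y]$. So "conditioning on the terminal state" produces an integral of the form $\int \bar{S}(x,y)\,p_{t_0}(x,y)\,g(y)\,dy$ in which the continuity of $\bar{S}(x,y)$ in $x$ is not granted by (A1)--(A4). You would need extra structure (e.g.\ a smooth joint density of $(\Sens(t_0),X(t_0))$), or you should prove the strong-Feller claim only for functions in the range of $\nabla$, where $Q^{t_0}\nabla f=\nabla P^{t_0}f=\int\nabla_x p_{t_0}(x,y)f(y)\,dy$ is directly handled by (A2); as far as I can tell, that restricted case is the only one the paper uses downstream.

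A secondary issue is your direct dominated-convergence argument for $Z_t\colon\Lve{0}\to\Lve{0}$ "using $v^\eta$ as dominating envelope." For $\eta<1$ this can be repaired by replacing domination with Vitali: $\Expect_{x_n}[v(X(t))]\le\lll P^t\lll_v\, v(x_n)$ is bounded in $n$, so $\{v^\eta(X(t,x_n))\}$ is uniformly integrable and the conclusion follows. But for $\eta=1$ neither plain domination (one would need a single integrable envelope covering all $x_n$, which is not supplied by a merely $C^1$, unbounded $V$) nor this uniform-integrability route is available from (DV3) alone, since $V$ cannot be scaled up. The paper in fact obtains $P^t\colon\Lv{0}\to\Lv{0}$ not in Part (i) directly but through Part (iii), using the separately proved uniform-on-compact convergence of the truncations $P^tI_{\Chi_n}g\to P^tg$. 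Your Part (i) should restrict its direct claims to bounded functions (ordinary and strong Feller) and defer the weighted-space mapping statements to Part (iii).
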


The proof of the next result is also elementary.
\begin{lemma}
\label{t:PfandQtruncation} 
Suppose the conclusions of \Proposition{t:PfandQvseparable} are true,
that is,
for each $t \ge t_1$:
\begin{alphanum}
\item $P^{t}$ is separable in $\Lvx$;
\item $Q_{i,j}^{t}$ is separable in $\Lvx$ for any pair $1 \leq i,j \le \ell$.
\end{alphanum}
Then, the kernels $P^{t}$ and $Q^{t}$ can be approximated by their truncations:
\begin{romannum}
\item  $ \lim\limits_{n \to \infty} \lll P^{t} - I_{\Chi_n}P^{t}I_{\Chi_n} \lll_{v,1} = 0 $;
\item  $ \lim\limits_{n \to \infty} \lll Q_{i,j}^{t} - I_{\Chi_n}Q_{i,j}^{t}I_{\Chi_n} \lll_v = 0 $ for any pair $1\leq i,j \le \ell$.
\end{romannum} 
\end{lemma}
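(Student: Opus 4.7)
My plan is to leverage the separability hypothesis to reduce each of $P^t$ and $Q^t_{i,j}$ to finite-rank approximants whose defining functions and measures may be taken with compact support, so that truncation by $I_{\Chi_n}$ leaves these approximants unchanged once $n$ is large enough, and the separability error then propagates cleanly through the truncated kernel.

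Fixing $\varepsilon > 0$, I would first invoke (a) in the form used in Lemma~B.5 of \cite{konmey05a}, which produces separating operators $T = \sum_{i,j=1}^N m_{ij}\, s_i \otimes \nu_j$ whose components $s_i$ and $\nu_j$ share a common compact support $K \subset \state$, satisfying $\lll P^t - T \lll_v \le \varepsilon$. Choosing $n_0$ so that $R_{n_0} \supset K$, for every $n \ge n_0$ we have $\Chi_n s_i = s_i$ and $\Chi_n \cdot \nu_j = \nu_j$, whence $I_{\Chi_n} T I_{\Chi_n} = T$. The three-term decomposition
\[
P^t - I_{\Chi_n} P^t I_{\Chi_n} = (P^t - T) + (T - I_{\Chi_n} T I_{\Chi_n}) + I_{\Chi_n}\,(T - P^t)\, I_{\Chi_n},
\]
combined with $\lll I_{\Chi_n} \lll_v \le 1$ (from $0 \le \Chi_n \le 1$) and the vanishing of the middle term, yields $\lll P^t - I_{\Chi_n} P^t I_{\Chi_n} \lll_v \le 2\varepsilon$ for all $n \ge n_0$. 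Part~(ii) follows by the identical argument applied component-wise to each $Q^t_{i,j}$, using hypothesis~(b) in place of~(a).

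For the $\Lv{1}$ conclusion in~(i), I would add two ingredients. First, $\lll I_{\Chi_n} \lll_{v,1}$ is uniformly bounded in $n$: this is immediate from $|\Chi_n| \le 1$, $|\partial_i \Chi_n| \le 2$, and the Leibniz rule applied to $\partial_i(\Chi_n f)$. Second, the construction in Lemma~B.5 of \cite{konmey05a}, carried out against the $C^1$ transition density provided by~(A2), produces components $s_i$ and $\nu_j$ that are smooth as well as compactly supported, so the same $T$ also obeys $\lll P^t - T \lll_{v,1} \le \varepsilon$. Running the same three-term decomposition in the stronger norm then gives $\lll P^t - I_{\Chi_n} P^t I_{\Chi_n} \lll_{v,1} \le (1 + C^2)\varepsilon$ for all $n \ge n_0$, with $C = \lll I_{\Chi_n} \lll_{v,1}$.

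The principal obstacle is that hypothesis~(a) only asserts abstract separability in $\Lvx$, not the compact-support and smoothness structure of the separating operators. The bound $\lll P^t - T \lll_v \le \varepsilon$ alone cannot be promoted to a bound in $\lll \cdot \lll_{v,1}$, and absent compact support, $I_{\Chi_n} T I_{\Chi_n}$ need not equal $T$ for any $n$ --- there is no obvious way to control $\lll T - I_{\Chi_n} T I_{\Chi_n} \lll_v$ if the $s_i$'s and $\nu_j$'s are only known to lie in $\Lvx$ and $\clM_1^v$ respectively. The proof therefore has to reach past the statement of \Proposition{t:PfandQvseparable} and extract from the internal structure of its proof the regularity of the separating components; once that structural fact is in hand, the rest of the argument is a routine triangle-inequality exercise.
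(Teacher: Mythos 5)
Your argument for part (ii), and for the $\Lvx$ part of (i), is essentially the same triangle-inequality reduction the paper has in mind: separability with compactly-supported separating operators gives the truncation limit immediately. You are right that this reads the compact-support structure out of the proof of \Proposition{t:PfandQvseparable} (ultimately Lemma~B.5 of \cite{konmey05a}) rather than the abstract separability definition; that is fine.

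For the $\Lv{1}$ part of (i) there is a genuine gap, which you yourself half-identify in your closing paragraph. Your proposed repair --- that the components $s_i$, $\nu_j$ produced by the construction are $C^1$, so ``the same $T$ also obeys $\lll P^t - T\lll_{v,1}\le\varepsilon$'' --- does not follow. Smoothness of the $s_i$'s makes $T f$ differentiable, but it does nothing to guarantee that $\partial_i(P^t f - Tf)$ is small in $\|\cdot\|_v$: the $\Lvx$ approximation $\lll P^t - T\lll_v\le\varepsilon$ gives no control on the derivative of the error, and the construction in \cite{konmey05a} was never designed to approximate derivatives. Getting that additional control is exactly the hard content of \Lemma{t:haPv1Separable}, which uses Bernstein polynomials precisely because they uniformly approximate a $C^1$ function \emph{and} its partial derivatives; a generic finite-rank $\Lvx$-approximant has no such property.

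The paper's proof of part (i) takes an entirely different route and never tries to promote the $\Lvx$-separating operators to $\Lv{1}$. The key step you are missing is the chain-rule identity $\nabla P^t f = Q^t\nabla f$, which is why the lemma's hypothesis insists on separability of the matrix entries $Q^t_{i,j}$ as well as $P^t$. Using \Lemma{t:Feller} and the $\Lvx$ truncation limits already established, \Lemma{t:nablaPQlemma} yields $\nabla P^t f = Q^t\nabla f$. Then the product rule gives
\[
\partial_i\bigl(I_{\Chi_n}P^t I_{\Chi_n}f\bigr)
= (\partial_i\Chi_n)\,(P^t I_{\Chi_n}f)
+ I_{\Chi_n}\bigl(Q^t\nabla(f\Chi_n)\bigr)_i .
\]
The first term is bounded by $2\,\lll I_{R_n^c}P^t\lll_v\,\|f\|_v$ since $\nabla\Chi_n$ vanishes on $R_n$, and $\lll I_{R_n^c}P^t\lll_v\to 0$ follows from the $\Lvx$ truncation bound via \Lemma{t:TruncLR}. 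The difference between the second term and $(Q^t\nabla f)_i = \partial_i P^t f$ is then controlled by the $\Lvx$ truncation bound for $Q^t_{i,j}$, i.e.\ part (ii). So the derivative estimate reduces entirely to $\Lvx$-level bounds; no $\Lv{1}$ separability is invoked. Your plan, by contrast, cannot close without proving $\Lv{1}$ separability from scratch, which is a substantially harder fact than the hypotheses supply.
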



\begin{proof}
The fact that the kernels can be approximated in $\Lvx$ by their truncations 
for each $t\ge t_1$ follows directly from the assumption that they are 
separable: We have,
\begin{equation}
\begin{aligned}
I_{\Chi_n}P^{t}I_{\Chi_n} & \tov P^{t}
\\
I_{\Chi_n}Q^{t}_{i,j}I_{\Chi_n} & \tov Q^{t}_{i,j}
, \quad 1 \leq i,j \le \ell.
\end{aligned}
\label{eq:trunclv}
\end{equation}
In particular,  part (ii) is immediate.

To complete the proof of (i), it remains to be shown that there is a vanishing sequence $\{\epsy(n)\}$ such that,
 for any function $f \in \Lv{1}$,
\[
  \| \partial_i \{ P^{t}f\}  - \partial_i  \{I_{\Chi_n}P^{t}I_{\Chi_n}f\} \|_v \le \epsy(n) \|f\|_{v,1}.
\]
\Lemma{t:Feller} along with \eqref{eq:trunclv} implies that the 
assumptions of \Lemma{t:nablaPQlemma} are satisfied (with $t \geq t_1$):
\[
P^t \colon\Lve{0}\to \Lve{0}, \qquad Q_{i,j}^t\colon\Lve{0}\to \Lve{0}, \quad  1 \leq i,j \le \ell.
\]
Now, applying the product rule gives,
\[
\partial_i  \{I_{\Chi_n}P^{t}I_{\Chi_n}f\}
		= \{\partial_i \Chi_n \} \{ P^{t} I_{\Chi_n} f\} 
			+ I_{\Chi_n}  (Q^{t} \nabla  (f \Chi_n) )_i,
\]
with the second term justified applying \Lemma{t:nablaPQlemma}.

The first term can be bounded bounded,
\[
\|  \{\partial_i \Chi_n \} \{ P^{t} I_{\Chi_n} f\}  \|_v 
\le \epsy_1(n) \| f\|_v \le \epsy_1(n) \| f\|_{v,1}\,,
\]
where,  
\[
 \epsy_1(n)  =   \bigl(\max_i  \|\partial_i \Chi_n \|_{\infty} \bigr) \lll I_{R_n^c} P^{t}   \lll_v \le 2 \lll I_{R_n^c} P^{t}   \lll_v.
\] 
The pre-multiplication by $I_{R_n^c} $ is justified since $\nabla  \Chi_n = 0$ on $R_n$. 
Equation \eqref{eq:trunclv}  along with \Lemma{t:TruncLR} implies that 
$\lll I_{R_n^c} P^{t} \lll_v \to 0$ as $n \to \infty$, and hence $\lim_{n\to\infty} \epsy_1(n) =0$.
Therefore, 
\[
\begin{aligned}
  \| 
  \partial_i \{ P^{t}f\}  - \partial_i  \{I_{\Chi_n}P^{t}I_{\Chi_n}f\} \|_v   
  	& \le \epsy_1(n) \|f\|_{v,1} 
  \\
	&  \quad +  \|  (Q^{t} \nabla  f)_i -  I_{\Chi_n}  (Q^{t} \nabla  (f \Chi_n))_i  \|_v .
\end{aligned}
\] 
Once more applying equation \eqref{eq:trunclv}, it is 
straightforward to see that there is a vanishing sequence $\{\epsy_2(n)\}$
such that:
\[
\|  (Q^{t} \nabla  f)_i -  I_{\Chi_n}  (Q^{t} \nabla  (f \Chi_n))_i  \|_v \le \epsy_2(n) \|f\|_{v,1} ,\quad n\ge 1. 
\]
This  completes the proof of the lemma.
\end{proof}

The justification of the representation \eqref{e:gradFish} for $\nabla h$   requires a different set of truncation arguments:

\begin{lemma}
	\label{t:PfandQtruncation-eta}
	Let $\eta\in(0,1)$. For each $t\ge 1$, the kernels $P^{t}$ and 
	$Q^{t}$ can be approximated in $\Lve{1}$ by their truncations 
	on the right:
	\begin{romannum}
		\item  $ P^{t}I_{\Chi_n} \,\,\,\, \tove P^{t}$;
		\item  $ Q_{i,j}^{t}I_{\Chi_n} \tove Q_{i,j}^{t}, \quad$ 
		for any pair $1\leq i,j \le \ell$.
	\end{romannum} 
\end{lemma}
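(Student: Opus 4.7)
The proof mirrors that of Lemma~\ref{t:PfandQtruncation}, suitably modified to exploit the strict inequality $\eta<1$. Two features of the scaled weight $v^\eta$ are central: first, Jensen's inequality applied to (DV3) yields $Pv^\eta \le (Pv)^\eta$, which iterates to the uniform bound $P^t v^\eta \le K v^\eta$ for every $t\ge 0$ and provides extra tail decay of $Pv^\eta(x)/v^\eta(x)$ as $W(x)\to\infty$; second, Theorem~\ref{t:Pt1vSeparable}(ii)(c) provides the gradient identity $\nabla P^t f = Q^t\nabla f$ for all $t\ge 1$ when $\eta<1$, so derivative estimates reduce to bounds on $Q^t$.

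For part (i), I would treat the two components of the $v^\eta$-Sobolev norm separately. For the zero-th order piece, bound
\begin{equation*}
v^{-\eta}(x)\bigl|\bigl(P^t - P^t I_{\Chi_n}\bigr) f(x)\bigr|
\le \|f\|_{v^\eta,1}\,\varepsilon_n(x),
\qquad
\varepsilon_n(x) := v^{-\eta}(x)\,\Expect_x\!\bigl[v^\eta(X(t))\,I_{R_n^c}(X(t))\bigr],
\end{equation*}
and aim at $\sup_x \varepsilon_n(x) \to 0$. Fix $\eta'\in(\eta,1)$; Jensen and (DV3) give $P^tv^{\eta'}\le C v^{\eta'}$, while the pointwise inequality $v^\eta(y) I_{R_n^c}(y) \le v_n^{-(\eta'-\eta)} v^{\eta'}(y)$ (valid on $R_n^c$, where $v_n:=\inf_{R_n^c} v$) yields $\varepsilon_n(x) \le C v_n^{-(\eta'-\eta)} v^{\eta'-\eta}(x)$. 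A localization of $\state$ into a large sublevel set of $V$ versus its complement, combined with the compact sublevel sets of $W$ from (A4) and the pointwise decay of $P^tv^\eta(x)/v^\eta(x)$ at infinity, then completes the uniform estimate.

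For the derivative part of the $\Lve{1}$-norm, apply the product rule together with the gradient identity:
\begin{equation*}
\nabla\bigl(P^t f - P^t I_{\Chi_n} f\bigr)
= Q^t\bigl((1-\Chi_n)\nabla f\bigr) - Q^t\bigl(f\,\nabla\Chi_n\bigr).
\end{equation*}
The first term is handled by the analogous tail estimate applied to each component $Q^t_{i,j}$, using (A1) to bound $\|\Sens(t)\|$ and the same Jensen-plus-(DV3) machinery to control the product $|\Sens_{i,j}(t)|v^\eta(X(t))$. The second term is controlled by $|\nabla\Chi_n|\le 2$, the localization of $\nabla\Chi_n$ to the annulus $R_{n+1}\setminus R_n$, and the same tail decay. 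Part (ii) follows by the same strategy applied directly to $Q^t_{i,j}$, without the first-derivative step.

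The hard part is the uniform-in-$x$ tail decay $\sup_x \varepsilon_n(x)\to 0$. The naive Chebyshev bound $P^t(x, R_n^c) \le v_n^{-1} P^t v(x)$ delivers only $v^{1-\eta}(x)/v_n^{1-\eta}$, which is unbounded in $x$. The essential device is the slack afforded by $\eta<1$: choosing $\eta<\eta'<1$ supplies spare integrability via $P^tv^{\eta'}\le C v^{\eta'}$, and (DV3) together with the compact sublevel sets of $W$ yields the additional decay on the complement of a large sublevel set of $V$ that compensates for the residual growth $v^{\eta'-\eta}(x)$ and furnishes the required uniform limit.
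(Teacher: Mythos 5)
Your proposal takes a genuinely different and more careful route than the paper's. The paper's proof is essentially a single pointwise estimate: it bounds
\begin{equation*}
\bigl|P^t f(x) - P^t I_{\Chi_n}f(x)\bigr|
\le \|f\|_{v^\eta}\,\Bigl[\sup_{R_n^c} v^{\eta-1}\Bigr]\int_{R_n^c}P^t(x,dy)\,v(y),
\end{equation*}
and then simply asserts that this is of the form $\|f\|_{v^\eta}\,\varepsilon(n)$ with $\varepsilon(n)\to 0$, invoking $\lll P^t\lll_v<\infty$ and the fact that $v$ has compact sublevel sets. As literally written this only yields a bound proportional to $[\sup_{R_n^c}v^{\eta-1}]\cdot v(x)$, so after dividing by $v^\eta(x)$ one is left with a factor $v^{1-\eta}(x)$ that is unbounded in $x$ -- the paper glosses over precisely the uniformity gap you flag, and it does not treat the derivative component of the $\Lve{1}$-norm for $P^t$ at all (for $Q$ it merely says ``using the same arguments''). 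Your proposal fills both gaps: the split of $\state$ into a sublevel set of $V$ (where $v^{1-\eta}(x)$ is bounded and $\sup_{R_n^c}v^{\eta-1}\to 0$ does the work) versus its complement (where the alternative bound $\varepsilon_n(x)\le v^{-\eta}(x)P^tv^\eta(x)$ decays, because Jensen applied to (DV3) gives $Pv^\eta\le e^{\eta(-\delta W + b\ind_C)}v^\eta$ and $W$ has compact sublevel sets) is exactly the argument the paper's one-line conclusion implicitly requires. You also carry out the first-derivative estimate via $\nabla P^tf=Q^t\nabla f$ and the product rule, which the paper omits. So your write-up proves the lemma as stated, whereas the paper's proof, read literally, establishes something weaker (and the weaker conclusion happens to suffice for the paper's downstream use via Lemma~\ref{t:Feller}).

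One small inefficiency: the auxiliary exponent $\eta'\in(\eta,1)$ buys you nothing over taking $\eta'=1$ outright. The inequality $v^\eta(y)\le v_n^{-(1-\eta)}v(y)$ on $R_n^c$, together with $\lll P^t\lll_v<\infty$, already yields $\varepsilon_n(x)\le C\,v_n^{-(1-\eta)}v^{1-\eta}(x)$, which is all the sublevel-set branch of the localization needs. The ``slack from $\eta<1$'' is genuinely used, but it enters through the off-sublevel-set decay $P^tv^\eta(x)/v^\eta(x)\le K e^{-\eta\delta W(x)}$ and not through the intermediate power $\eta'$.
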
  

\begin{proof}
	Let $\eta\in(0,1)$, take $f\in\Lve{1}$, and let
	$f_n \eqdef I_{\Chi_n}f$. Then, for all $t \geq 1$,
\[
\begin{aligned}
\big| P^t f(x) - P^t f_n(x) \big| & \le 
\bigg| \int_{R_n^c} P^t(x,dy) f_n(y) \bigg| 
\\
& \leq \|f\|_{v^\eta} \int_{R_n^c} P^t (x,dy) v(y) 
	\bigg( \frac{v^\eta (y)}{v(y)} \bigg) \\
& \leq \|f\|_{v^\eta} \Big[\sup_{y' \in R_n^c} v^{\eta - 1}(y')\Big]
	\int_{R_n^c} P^t (x,dy) v(y) \\
& \leq \|f\|_{v^\eta} \epsy(n),
\end{aligned}
\]
where $\epsy(n) \to 0$ as $n \to \infty$.   The last step follows from the fact that $\lll P^t\lll_v<\infty$  under (DV3),   
and   $v(x) \to \infty$ as $\|x\|_2 \to \infty$  because
$v$ has compact sublevel sets under assumption~(A4).
	
Under assumption~(A1), and using the same arguments as above, we have:
\[
\lim\limits_{n \to \infty} \| Q_{i,j}^{t} f - Q_{i,j}^{t} f_n \|_{v^\eta} = 0, \,\, \text{for all} \,\, 1 \leq i,j \leq \ell.
\]
\end{proof}

The following strengthening of the Feller property is another step in the proof of \Theorem{t:Pt1vSeparable}.
\begin{proposition}  
\label{t:PQv0lemma}
Under assumptions {\em (A1)--(A4)}:
\begin{romannum}
\item  
For all $t \geq t_1$, and $\eta=1$,
\begin{equation}
P^t \colon\Lve{0}\to \Lve{0}, \qquad Q_{i,j}^t\colon\Lve{0}\to \Lve{0}, \quad  1 \leq i,j \le \ell .
\label{e:Lve0}
\end{equation}

\item   The conclusions \eqref{e:Lve0} hold for all $t \geq 1$  when $\eta \in (0,1)$.
	\qed
\end{romannum}
\end{proposition}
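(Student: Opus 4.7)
My plan is to deduce both parts of the proposition by verifying the hypothesis of \Lemma{t:Feller}(iii) for each of the operators $P^t$ and $Q_{i,j}^t$. Namely, for the relevant $\eta$ and $t$, I would show that for every $g \in \Lvex$,
\[
\lim_{n\to\infty} Z_t I_{\Chi_n} g \;=\; Z_t g
\]
uniformly on compact subsets of $\state$, where $Z_t$ stands for either $P^t$ or $Q_{i,j}^t$. Given this, \Lemma{t:Feller}(iii) immediately delivers $Z_t : \Lve{0} \to \Lve{0}$, which is the desired conclusion. The first reduction is that it suffices to establish the operator-norm bound $\lll Z_t - Z_t I_{\Chi_n}\lll_{v^\eta} \to 0$, because then for any $g \in \Lvex$ and any compact $K \subset \state$,
\[
\sup_{x\in K} \bigl|Z_t g (x) - Z_t I_{\Chi_n} g(x)\bigr| \;\le\; \Bigl(\sup_{x\in K} v^\eta(x)\Bigr)\, \lll Z_t - Z_t I_{\Chi_n}\lll_{v^\eta}\, \|g\|_{v^\eta},
\]
which vanishes as $n\to\infty$ by continuity of $v^\eta$ and compactness of $K$.

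For part (i), where $\eta = 1$ and $t \geq t_1$, I would invoke \Proposition{t:PfandQvseparable} to obtain separability of $P^t$ and each $Q^t_{i,j}$ in $\Lvx$. The standard approximation argument (recalled at the start of the proof of \Lemma{t:PfandQtruncation}) turns separability in $\Lvx$ into the two-sided truncation bounds $\lll P^t - I_{\Chi_n} P^t I_{\Chi_n}\lll_v \to 0$ and $\lll Q^t_{i,j} - I_{\Chi_n} Q^t_{i,j} I_{\Chi_n} \lll_v \to 0$. An application of \Lemma{t:TruncLR} then converts these into the one-sided bounds $\lll Z_t - Z_t I_{\Chi_n} \lll_v \to 0$, which is exactly the $\eta=1$ instance of the reduction above, so that \Lemma{t:Feller}(iii) applies.

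For part (ii), where $\eta \in (0,1)$ and $t \geq 1$, \Lemma{t:PfandQtruncation-eta} directly supplies the one-sided operator-norm approximations $P^t I_{\Chi_n} \tove P^t$ and $Q^t_{i,j} I_{\Chi_n} \tove Q^t_{i,j}$, so the same appeal to \Lemma{t:Feller}(iii) closes the argument.

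The only real content behind the dichotomy between parts (i) and (ii) sits in the truncation step, and this is where the main obstacle lies. For $\eta < 1$, \Lemma{t:PfandQtruncation-eta} exploits the decay of $v^{\eta-1}$ off the compact sublevel sets of $v$, which allows the one-sided truncation bound to hold already for all $t \geq 1$. In the critical case $\eta = 1$ this decay is unavailable, and one is forced to rely instead on the density-based separability of \Proposition{t:PfandQvseparable}, which is only valid for $t \geq t_1$; hence the restriction to large $t$ in (i). Once these truncation facts are in place, everything else is a direct assembly of lemmas already at our disposal.
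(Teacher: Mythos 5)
Your argument is correct and follows essentially the same route as the paper's proof: establish a one-sided $\Lvx$-operator-norm truncation bound (via separability plus \Lemma{t:TruncLR} for $\eta=1$, $t\ge t_1$; via \Lemma{t:PfandQtruncation-eta} for $\eta\in(0,1)$, $t\ge 1$) and then feed the resulting uniform-on-compacts convergence into \Lemma{t:Feller}(iii). The only notable difference is that you invoke \Proposition{t:PfandQvseparable} directly and work throughout in the $\Lvx$ operator norm, whereas the paper cites \Lemma{t:PfandQtruncation}~(i), whose stated conclusion is the $\Lv{1}$-norm bound; your formulation is the more careful one here, since applying \Lemma{t:Feller}(iii) to a general $g\in\Lve{0}$ (continuous but not necessarily differentiable) requires precisely the $\Lvx$ bound, which is established inside the proof of \Lemma{t:PfandQtruncation} but is not what part (i) of that lemma literally asserts. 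Your explicit reduction from the one-sided operator-norm bound to uniform convergence on compact sets also usefully unpacks a step the paper leaves implicit.
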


\begin{proof} 
	\Lemma{t:PfandQtruncation} (i) along with \Lemma{t:TruncLR} implies that for any function $g\in\Lv{0}$ and all $t \ge t_1$ we have, 
\[
	\lim\limits_{n \to \infty} P^{t} I_{\Chi_n} g   =  P^{t} g,
\]  
	where the convergence is uniform on compact subsets of $\state$.   It then follows from \Lemma{t:Feller} that $P^{t} \colon\Lve{0}\to \Lve{0}$ for any $\eta\in (0,1]$.
	
Similarly, using \Lemma{t:PfandQtruncation} (ii),
\[
	\lim\limits_{n \to \infty}  Q_{i,j}^{t} I_{\Chi_n} g   =   Q_{i,j}^{t} g,
	\]
	for any $g \in \Lv{0}$. This again implies that $Q_{i,j}^{t} \colon\Lve{0}\to \Lve{0}$, $\eta\in (0,1]$, from \Lemma{t:Feller}. 
	This completes the proof of part (i) of the proposition.

The proof of part (ii) follows exactly in the same manner, using \Lemma{t:PfandQtruncation-eta} (instead of \Lemma{t:PfandQtruncation}) along with \Lemma{t:Feller}. 
\end{proof}
 
 \begin{proof}[Proof of  \Theorem{t:Pt1vSeparable}]
First consider part (i).
\Proposition{t:PQv0lemma} establishes (b), and part of~(a): 
For all $t \geq t_1$, and $\eta \in (0,1]$,
\begin{equation}
P^t \colon\Lve{0}\to \Lve{0}, \qquad \text{and} \qquad Q_{i,j}^t\colon\Lve{0}\to \Lve{0}, \quad  1 \leq i,j \le \ell .
\label{e:t:PQv0lemma}
\end{equation}
Applying \Lemma{t:nablaPQlemma} we obtain the remainder of (a), and also (c).

 	
Assumption~(A2) implies that $P^{t_1}$ has a density which is $C^1$.  Furthermore, from \Lemma{t:PfandQtruncation}, we conclude that under 
assumptions (A1)--(A4), $P^{t_1}$ can be approximated by its truncation $I_{\Chi_n}P^{t_1}I_{\Chi_n}$ in $\Lv{1}$.   \Lemma{t:haPv1Separable} 
therefore completes the proof of (d).
	
Next, consider part (ii). \Proposition{t:PQv0lemma} again establishes (b).
Part (ii) of \Proposition{t:PQv0lemma} states that \eqref{e:t:PQv0lemma} holds for each   $t \geq 1$ and $\eta \in (0,1)$.  Consequently, results (a) and (c) follow as before by applying \Lemma{t:nablaPQlemma}.
\end{proof} 
%
%

\subsection{Separability and Bernstein polynomials}
\label{s:bernie}

\begin{proof}[Proof of \Lemma{t:haPv1Separable}]
Let $r_v (x,y) := r(x,y) v(y)$. For any function $g \in \Lvx$, we have:
\[
\haP g\, (x) = \int r_v (x,y) g(y) v^{-1} (y) \mu(dy).
\]
Since $v$ is assumed to be $C^1$, 
$r_v$ is also $C^1$ with compact support. 

Choose $n\ge 1$ such that $r_v(x,y) = 0$ on $(R_n \times R_n)^c$.
Therefore, for any given $\epsy>0$ there exists a Bernstein's polynomial $r^{\epsy_0}_v$
such that, for all $(x,y) \in R_{n+1} \times R_{n+1}$,
 \[
\begin{aligned}
  \Big | r_v(x,y) - r^{\epsy_0}_v(x,y) \Big | & \leq \epsy,
\\
\text{and} \quad
  \Big | \frac{\partial}{\partial x_i} r_v(x,y) - \frac{\partial}{\partial x_i} r^{\epsy_0}_v(x,y) \Big | & \leq \epsy, \quad 1\leq i \le \ell.
\end{aligned}
\]  
The approximating polynomial can be expressed in the suggestive form,
\[
r^{\epsy_0}_v(x,y)  =  \sum_{i=1}^N   s_i^0(x)  r_i^0(y)
\]
Truncating the approximation smoothly as 
$r^\epsy_v(x,y) = \Chi_{n}(x)\Chi_{n}(y) r^{\epsy_0}_v(x,y)$,
we obtain a function supported on $R_{n+1}\times R_{n+1}$,
\[
r^\epsy_v(x,y)  =  \sum_{i=1}^N   s_i(x)  r_i(y),
\]
with $s_i =  \Chi_{n}s_i^0$ and  $r_i =  \Chi_{n} r_i^0$.
It is then straightforward that,
\[
\begin{aligned}
\sup_{x,y} \Big | r_v(x,y) - r^\epsy_v(x,y) \Big | &\leq \epsy,\\
\text{and} \quad
\sup_{x,y} \Big | \frac{\partial}{\partial x_i} r_v(x,y) - \frac{\partial}{\partial x_i} r^\epsy_v(x,y) \Big | &\leq \epsy, \quad 1\leq i \le \ell,
\end{aligned}
\]
where the suprema are over $(x,y)\in \state \times \state$. 

The following approximating kernel has finite rank:
\[
T_\epsy (x,dy) = r^\epsy_v (x,y)  v^{-1} (y) \mu(dy).
\]
We also have, 
\[
\begin{aligned}
\Big | \haP g(x) - T_\epsy g(x) \Big | 
& \leq  \int \Big | r_v(x,y) - r^\epsy_v(x,y) \Big | \Big | 
	\frac{g(y)}{v(y)} \Big | \mu(dy) \\
& \leq   \sup_{x,y} \Big | r_v(x,y) - r^\epsy_v(x,y) \Big |   
	\sup_z\Big | \frac{g(z)}{v(z)} \Big |   \\
& \leq    \epsy \|g\|_v,
\end{aligned}
\]
and,
\[
\begin{aligned}
\Big | \frac{\partial}{\partial x_i} \haP g(x) - \frac{\partial}{\partial x_i} T_\epsy g (x) \Big | & =  \Big | \frac{\partial}{\partial x_i}  \int \Delta_r^\epsy (x,y) \frac{g(y)}{v(y)} \mu(dy)  \Big | \\
& =  \Big | \lim_{{\delta} \to 0} \frac{1}{\delta} \int \Big ( \Delta_r^\epsy(x + {\delta e^i},y) - \Delta_r^\epsy(x,y) \Big ) \frac{g(y)}{v(y)} \mu(dy)  \Big |,
\end{aligned}
\]
where $\Delta_r^\epsy = r_v - r^\epsy_v$,   
and $e^i$ denotes the $i^{th}$ basis vector in $\Re^\ell$.

Since, both $r_v$ and $r_v^\epsy$ are $C^1$,  the mean value theorem gives,
\[
\frac{1}{\delta}\Big |  \Delta_r^\epsy (x + {\delta e^i},y) - \Delta_r^\epsy(x,y) \Big | = \Big | \frac{\partial}{\partial x_i}  \Delta_r^\epsy(\barx_i,y) \Big | ,
\]
for some $\barx_i \in (x,x+{\delta e^i})$.  The right-hand side is 
uniformly bounded over all $\delta\in (0,1]$ and thus,
by dominated convergence,
\[
\begin{aligned}
\Big | \frac{\partial}{\partial x_i} \haP g(x) - \frac{\partial}{\partial x_i} T_\epsy g(x) \Big | & \leq   \int \limsup_{{\delta} \to 0} \frac{1}{{\delta}} \Big | \Delta_r^\epsy(x + {\delta e^i},y) - \Delta_r^\epsy(x,y) \Big | \Big | \frac{g(y)}{v(y)} \Big | \mu(dy)  \\
& \leq     \sup_{x,y} \Big | \frac{\partial}{\partial x_i} r_v(x,y) - \frac{\partial}{\partial x_i} r^\epsy_v(x,y) \Big |   \|g\|_v 
\\
& \leq   \epsy \|g\|_v.
\end{aligned}
\]
This completes the proof of separability of $\haP$ in $\Lv{1}$. 
\end{proof}

\newpage
 
\newpage

\def\cprime{$'$}\def\cprime{$'$}


\begin{thebibliography}{10}

\bibitem{arncra91}
L.~Arnold and H.~Crauel.
\newblock Random dynamical systems.
\newblock In L.~Arnold, H.~Crauel, and J.-P. Eckmann, editors, {\em {Lyapunov
		Exponents}}, volume 1486 of {\em Lecture Notes in Mathematics}, pages 1--22.
Springer Berlin Heidelberg, 1991.

\bibitem{asmgly07}
S.~Asmussen and P.~W. Glynn.
\newblock {\em Stochastic Simulation: Algorithms and Analysis}, volume~57 of
{\em Stochastic Modelling and Applied Probability}.
\newblock Springer-Verlag, New York, 2007.

\bibitem{atazei97}
R.~Atar and O.~Zeitouni.
\newblock {Lyapunov} exponents for finite state nonlinear filtering.
\newblock {\em SIAM J. Control Optim.}, 35(1):36--55, 1997.

\bibitem{balmey00a}
S.~Balaji and S.~P. Meyn.
\newblock Multiplicative ergodicity and large deviations for an irreducible
{M}arkov chain.
\newblock {\em Stoch. Proc. Applns.}, 90(1):123--144, 2000.

\bibitem{bax89}
P.~H. Baxendale.
\newblock Lyapunov exponents and relative entropy for a stochastic flow of
diffeomorphisms.
\newblock {\em Prob. Theory Related Fields}, 81(4):521--554, 1989.

\bibitem{but53}
P.~L. Butzer.
\newblock On two-dimensional {Bernstein} polynomials.
\newblock {\em Canadian Journal of Mathematics}, 5:107--113, 1953.

\bibitem{caoche97}
X.-R. Cao and H.-F. Chen.
\newblock Potentials, perturbation realization, and sensitivity analysis of
{Markov} processes.
\newblock {\em IEEE Trans. Automat. Control}, 42:1382--1393, 1997.

\bibitem{catgui17}
P.~Cattiaux and A.~Guillin.
\newblock Hitting times, functional inequalities, {Lyapunov} conditions and
uniform ergodicity.
\newblock {\em Journal of Functional Analysis}, 272(6):2361--2391, 2017.

\bibitem{catguiwanwu08}
P.~Cattiaux, A.~Guillin, F.-Y. Wang, and L.~Wu.
\newblock {Lyapunov} conditions for logarithmic {Sobolev} and super
{Poincar\'e} inequality.
\newblock Available on Math. ArXiv 0712.0235, 2007.

\bibitem{devmey16a}
A.~M. Devraj and S.~P. Meyn.
\newblock Differential {TD} learning for value function approximation.
\newblock In {\em 55th Conference on Decision and Control}, pages 6347--6354,
Dec 2016.

\bibitem{donvarI-II}
M.~Donsker and S.~Varadhan.
\newblock Asymptotic evaluation of certain {M}arkov process expectations for
large time. {I}. {I}{I}.
\newblock {\em Comm. Pure Appl. Math.}, 28:1--47; ibid. {\bf 28} (1975),
279--301, 1975.

\bibitem{donvarIII}
M.~Donsker and S.~Varadhan.
\newblock Asymptotic evaluation of certain {M}arkov process expectations for
large time. {I}{I}{I}.
\newblock {\em Comm. Pure Appl. Math.}, 29(4):389--461, 1976.

\bibitem{donvarIV}
M.~Donsker and S.~Varadhan.
\newblock Asymptotic evaluation of certain {M}arkov process expectations for
large time. {I}{V}.
\newblock {\em Comm. Pure Appl. Math.}, 36(2):183--212, 1983.

\bibitem{fen99a}
J.~Feng.
\newblock Martingale problems for large deviations of {Markov} processes.
\newblock {\em Stoch. Proc. Applns.}, 81:165--212, 1999.

\bibitem{fenkur06a}
J.~Feng and T.~G. Kurtz.
\newblock {\em Large Deviations for Stochastic Processes}, volume 131 of {\em
	Mathematical Surveys and Monographs}.
\newblock American Mathematical Society, Providence, U.S.A., 2006.

\bibitem{fle78a}
W.~Fleming.
\newblock Exit probabilities and optimal stochastic control.
\newblock {\em App. Math. Optim.}, 4:329--346, 1978.

\bibitem{glymey96a}
P.~W. Glynn and S.~P. Meyn.
\newblock A {L}iapounov bound for solutions of the {P}oisson equation.
\newblock {\em Ann. Probab.}, 24(2):916--931, 1996.

\bibitem{gonwu06a}
F.~Gong and L.~Wu.
\newblock Spectral gap of positive operators and applications.
\newblock {\em J. Math. Pures Appl.}, 85:151--191, 2006.

\bibitem{haimat08}
M.~Hairer and J.~C. Mattingly.
\newblock Spectral gaps in {Wasserstein} distances and the {2D} stochastic
{Navier}-{Stokes} equations.
\newblock {\em The Annals of Probability}, pages 2050--2091, 2008.

\bibitem{haupot77}
W.~Hau{\ss}mann and P.~Pottinger.
\newblock On multivariate approximation by continuous linear operators.
\newblock In W.~Schempp and K.~Zeller, editors, {\em Constructive Theory of
	Functions of Several Variables: Proceedings of a Conference Held at
	Oberwolfach April 25 – May 1, 1976}, pages 101--108, Berlin, Heidelberg,
1977. Springer Berlin Heidelberg.

\bibitem{huakonmey02a}
J.~Huang, I.~Kontoyiannis, and S.~P. Meyn.
\newblock The {ODE} method and spectral theory of {Markov} operators.
\newblock In T.~E. Duncan and B.~Pasik-Duncan, editors, {\em Proceedings of the
	workshop held at the University of Kansas, Lawrence, Kansas, October 18--20,
	2001}, volume 280 of {\em Lecture Notes in Control and Information Sciences},
pages 205--222, Berlin, 2002. Springer-Verlag.

\bibitem{kin51}
E.~H. Kingsley.
\newblock Bernstein polynomials for functions of two variables of class
{$C^k$}.
\newblock {\em Proc.\ of the American Mathematical Society}, 2(1):64--71, 1951.

\bibitem{konmey12a}
I.~Kontoyiannis and S.~Meyn.
\newblock Geometric ergodicity and the spectral gap of non-reversible {Markov
	chains}.
\newblock {\em Prob. Theory Related Fields}, 154(1-2):327--339, 2012.
\newblock 10.1007/s00440-011-0373-4.

\bibitem{konmey17a}
I.~Kontoyiannis and S.~Meyn.
\newblock Approximating a diffusion by a finite-state hidden {Markov} model.
\newblock {\em Stochastic Processes and their Applications}, 127(8):2482--2507,
2017.

\bibitem{konmey03a}
I.~Kontoyiannis and S.~P. Meyn.
\newblock Spectral theory and limit theorems for geometrically ergodic {Markov}
processes.
\newblock {\em Ann. Appl. Probab.}, 13:304--362, 2003.

\bibitem{konmey05a}
I.~Kontoyiannis and S.~P. Meyn.
\newblock Large deviations asymptotics and the spectral theory of
multiplicatively regular {M}arkov processes.
\newblock {\em Electron. J. Probab.}, 10(3):61--123 (electronic), 2005.

\bibitem{konmey09a}
I.~Kontoyiannis and S.~P. Meyn.
\newblock Approximating a diffusion by a {Hidden {Markov} Model}.
\newblock http://arxiv.org/abs/0906.0259, and submitted for publication., June
2009.

\bibitem{kun90}
H.~Kunita.
\newblock {\em {Stochastic Flows and Stochastic Differential Equations}}.
\newblock Cambridge University Press, Cambridge, 1990.

\bibitem{laumehmeyrag15}
R.~S. Laugesen, P.~G. Mehta, S.~P. Meyn, and M.~Raginsky.
\newblock Poisson's equation in nonlinear filtering.
\newblock {\em SIAM J. Control Optim.}, 53(1):501--525, 2015.

\bibitem{majer:12}
P.~Majer.
\newblock Multivariate {B}ernstein polynomials for approximation of derivatives
(answer).
\newblock MathOverflow.
\newblock URL: \url{ https://mathoverflow.net/q/111257}, Nov. 02, 2012.

\bibitem{CTCN}
S.~P. Meyn.
\newblock {\em Control Techniques for Complex Networks}.
\newblock Cambridge University Press, 2007.
\newblock Pre-publication edition available online.

\bibitem{MT}
S.~P. Meyn and R.~L. Tweedie.
\newblock {\em Markov chains and stochastic stability}.
\newblock Cambridge University Press, Cambridge, second edition, 2009.
\newblock Published in the Cambridge Mathematical Library. 1993 edition online.

\bibitem{parver01}
E.~Pardoux and Y.~Veretennikov.
\newblock On the {Poisson} equation and diffusion approximation. {I}.
\newblock {\em Ann. Probab.}, 29(3):1061--1085, 07 2001.

\bibitem{raddevmey16}
A.~Radhakrishnan, A.~Devraj, and S.~Meyn.
\newblock Learning techniques for feedback particle filter design.
\newblock In {\em 55th Conference on Decision and Control}, pages 5453--5459,
Dec 2016.

\bibitem{rhegly17}
C.-H. Rhee and P.~Glynn.
\newblock {Lyapunov} conditions for differentiability of {Markov} chain
expectations: the absolutely continuous case.
\newblock {\em arXiv preprint arXiv:1707.03870}, 2017.

\bibitem{rienag55}
F.~Riesz and B.~S. Nagy.
\newblock {\em {Functional Analysis}}.
\newblock Ungar, New York, NY, 1955.
\newblock Translated from the 2d French by Leo F. Boron.

\bibitem{sch68}
P.~J. Schweitzer.
\newblock Perturbation theory and finite {Markov} chains.
\newblock {\em J. Appl. Prob.}, 5:401--403, 1968.

\bibitem{wu95a}
L.~Wu.
\newblock Large deviations for {Markov processes} under superboundedness.
\newblock {\em C. R. Acad. Sci Paris S\'erie I}, 324:777--782, 1995.

\bibitem{wu00b}
L.~Wu.
\newblock Uniformly integrable operators and large deviations for {Markov}
processes.
\newblock {\em Journal of Functional Analysis}, 172(2):301--376, 2000.

\bibitem{wu01}
L.~Wu.
\newblock Large and moderate deviations and exponential convergence for
stochastic damping {H}amiltonian systems.
\newblock {\em Stoch. Proc. Applns.}, 91(2):205--238, 2001.

\end{thebibliography}
\end{document}